\title{Single-pass Nystr\"{o}m approximation in mixed precision 
}
\author{Erin Carson\thanks{Faculty of Mathematics and Physics, Charles University. Both authors were supported by ERC Starting Grant No. 101075632, Charles University PRIMUS project no. PRIMUS/19/SCI/11, and by the Exascale Computing Project (17-SC-20-SC), a collaborative effort of the U.S. Department of Energy Office of Science and the National Nuclear Security Administration. The first author was additionally supported by Charles University Research program no. UNCE/SCI/023. } 
\and 
Ieva Dau\v{z}ickait\.{e}\footnotemark[1]}
\newcommand{\hA}{\widehat{A}}
\newcommand{\hB}{\widehat{B}}
\newcommand{\hC}{\widehat{C}}
\newcommand{\Amu}{A + \mu I}
\newcommand{\hAmu}{\widehat{A}_N + \mu I}
\newcommand{\hY}{\widehat{Y}}
\newcommand{\hy}{\widehat{y}}
\newcommand{\hF}{\widehat{F}}
\newcommand{\hf}{\widehat{f}}
\newcommand{\E}{\mathcal{E}}
\newcommand{\hlambda}{\widehat{\lambda}}
\newcommand{\hPisqr}{\widehat{P}^{-1/2}} 
\newcommand{\hPi}{\widehat{P}^{-1}}
\newcommand{\hP}{\widehat{P}}
\newcommand{\hU}{\widehat{U}}
\newcommand{\hnu}{\widehat{\nu}}
\newcommand{\lambmax}{\lambda_{max}}
\newcommand{\lambmin}{\lambda_{min}}
\newcommand{\sigmax}{\sigma_{max}}
\begin{document}

\maketitle

\renewcommand{\thefootnote}{\fnsymbol{footnote}}

\begin{abstract}
Low rank matrix approximations appear in a number of scientific computing applications. We consider the Nystr\"{o}m method for approximating a positive semidefinite matrix $A$. In the case that $A$ is very large or its entries can only be accessed once, a single-pass version may be necessary. In this work, we perform a complete rounding error analysis of the single-pass Nystr\"{o}m method in two precisions, where the computation of the expensive matrix product with $A$ is assumed to be performed in the lower of the two precisions. Our analysis gives insight into how the sketching matrix and shift should be chosen to ensure stability, implementation aspects which have been commented on in the literature but not yet rigorously justified. 

We further develop a heuristic to determine how to pick the lower precision, which confirms the general intuition that the lower the desired rank of the approximation, the lower the precision we can use without detriment.  We also demonstrate that our mixed precision Nystr\"{o}m method can be used to inexpensively construct limited memory preconditioners for the conjugate gradient method and derive a bound the condition number of the resulting preconditioned coefficient matrix. 
We present numerical experiments on a set of matrices with various spectral decays and demonstrate the utility of our mixed precision approach.  
\end{abstract}

\begin{keywords}
mixed precision, Nystr\"{o}m method, randomization, preconditioning, conjugate gradient
\end{keywords}

\begin{AMS}
65F08, 65F10, 65F50, 65G50, 65Y99
\end{AMS}

\section{Introduction}

We consider the construction of a rank-$k$ approximation $A_N$ to a positive semidefinite matrix $A \in \mathbb{R}^{n \times n}$ of the form
\begin{equation}
     A_N = (A \Omega) (\Omega^T A \Omega)^{\dagger} (A \Omega)^T, 
     \label{eq:Nystromapprox}
\end{equation}
where $\Omega \in \mathbb{R}^{n \times k}$ is a sampling matrix and $\dagger$ denotes the Moore-Penrose pseudoinverse. 
When the matrix $A$ is symmetric positive semidefinite, then a high quality approximation can be obtained using the Nystr\"{o}m method as shown theoretically and experimentally in \cite{GittensMahoney2016}. The Nystr\"{o}m method, a randomised approach, arises in two forms based on column-sampling and general random projections. 
The column-sampling approach is often analysed and used in machine learning settings \cite{WilliamsSeeger,DrineasMahoney05} and the general projection version has been explored for, e.g., approximating matrices in a streaming model \cite{Tropp2017fixed,Tropp2019streaming} and preconditioning linear systems of equations \cite{AlDaasNystprec,Dauzickaite2021forcing,Frangella2021}. The choice of the approach depends on the application; see the discussion in, e.g., \cite{GittensMahoney2016,Frangella2021}. In general, randomised methods are powerful tools for obtaining low-rank matrix approximations and are discussed in extensive reviews \cite{Halko11,MahoneyRndAlgs,Martinsson20,tropp2023randomized}.  

In this paper, we focus on the projection-based approach and the case when products with $A$ are very expensive and are thus the bottleneck of the randomised method. Such a setting motivates the use of a single-pass variant that requires only one matrix-matrix product with $A$ to reduce the overall cost. These are also employed in the streaming model in which $A$ can only be accessed once. 

The increasing commercial availability of hardware with low and mixed precision capabilities has inspired much recent work in developing mixed precision algorithms that can exploit this hardware to improve performance \cite{abdelfattah2021}. For instance, the latest NVIDIA H100 GPUs offer IEEE double (64 bit), IEEE single (32 bit), IEEE half (16 bit), and even quarter (8 bit) floating point storage and computation. When using specialized tensor core instructions, quarter precision can offer up to 4 petaflops/s and half precision (fp16) up to 2 petaflops/s performance on a single H100 GPU, compared to 60 teraflops/s for double precision (fp64) \cite{h100}. See Table~\ref{tab:ieee_param} for the unit roundoff and range for some IEEE floating-point arithmetics.

\begin{table}[htbp]
    \centering
    \footnotesize{
\resizebox{\textwidth}{!}{   
    \begin{tabular}{l|c|c|c|c}
        Arithmetic &  $u$ & \multicolumn{3}{c}{Range}\\
        \cline{3-5}
                  &     & $x^s_{min}$ & $x_{min}$ & $x_{max}$ \\
         \hline
        fp16 (half)  &  $2^{-11} \approx 4.88 \times 10^{-4}$ & $5.96 \times 10^{-8}$ & $6.10 \times 10^{-5}$ & $6.55 \times 10^4$\\
        fp32 (single) & $2^{-24} \approx 5.96 \times 10^{-8}$ & $1.40 \times 10^{-45}$ & $1.18 \times 10^{-38}$ & $3.40 \times 10^{38}$\\
        fp64 (double) & $2^{-53} \approx 1.11 \times 10^{-16}$ & $4.94 \times 10^{-324}$ & $2.22 \times 10^{-308}$ & $1.80 \times 10^{308}$\\
    \end{tabular}}}
    \caption{Unit roundoff $u$ for IEEE floating point arithmetics and the smallest positive subnormal number $x^s_{min}$, smallest positive number $x_{min}$, and the largest positive number $x_{max}$.}
    \label{tab:ieee_param}
\end{table}

Given that in our particular setting, the matrix-matrix products with $A$ are overwhelmingly the dominant cost, we thus seek to further reduce this cost through the use of low precision. We develop a mixed precision single-pass Nystr\"{o}m approach in which storage of and computation with $A$ is performed at a precision lower than the working precision. 

The natural question to ask is how does using the lower precision affect the quality of the approximation. We prove a bound on the error of $\Vert A-\hA_N\Vert_F$, where $\hA_N$ is the approximation computed by the mixed precision algorithm. Intuitively, this can be bounded in terms of the deviation of the exact (infinite precision) Nystr\"{o}m approximation $A_N$ from $A$ and the deviation of $\hA_N$ from $A_N$. These bounds are a large overestimate, but allow us to develop a practical heuristic to determine how low a precision can safely be used so that the error of the exact Nystr\"{o}m approximation remains dominant. Numerical experiments show that the heuristic is useful for a range of problems with various spectral decay curves.

We also consider the impact of the low-precision computations in preconditioning. Our focus is on limited memory preconditioners that can be constructed using the Nystr\"{o}m approximation and have been used in \cite{Dauzickaite2021forcing,Frangella2021}. This is an area that can particularly benefit from using the single pass mixed precision approach by reducing the cost of constructing the preconditioner. We extend the bounds on the condition number of the preconditioned system given in \cite{Frangella2021} to account for finite precision error.

In summary, our contributions are as follows. We provide a full finite precision analysis for computing the Nystr\"{o}m approximation via Algorithm~\ref{alg:nystrom_reg_id}, where two different precisions can be used and the computation of the matrix-matrix product can be performed in a lower precision. This analysis results in a deterministic bound for the Frobenius norm of the total finite precision error and allows us to formulate a practical heuristic for how to choose the lower precision. Insights into preserving stability when selecting the sketching matrix and a shift for the Cholesky decomposition are presented. We also extend bounds for the condition number of a matrix preconditoned by a limited memory preconditioner constructed using the Nystr\"{o}m approximation to account for the finite precision error. The theoretical results are illustrated with numerical experiments on synthetic and application problems.

The paper is structured as follows. In Section~\ref{sec:Nystrom_approx}, we discuss known bounds for the exact single-pass Nystr\"{o}m approximation, and derive and analyze a mixed precision variant. Then in Section~\ref{sec:preconditioning} we consider the application of our mixed precision approach to constructing limited memory preconditioners. Numerical examples are presented in Section~\ref{sec:numerics} and we conclude the paper in Section~\ref{sec:conclusions}.

\section{Nystr\"{o}m approximation}\label{sec:Nystrom_approx}
 
The approximation $A_N$ in \eqref{eq:Nystromapprox} can be written using an orthogonal projector $P_{A^{1/2}\Omega} = (A^{1/2} \Omega) (\Omega^T A \Omega)^{\dagger} (A^{1/2} \Omega)^T$ as $A_N = A^{1/2} P_{A^{1/2}\Omega} A^{1/2}$ and thus the quality of the approximation depends on the range of $\Omega$. This motivates using $\Omega$ that depend on $A$, for example, $\Omega = A G$, where $G$ is a random test matrix. Such an approach may be infeasible when the products with $A$ are expensive or in the streaming model, and thus we do not consider it in this paper. 

Structured sampling matrices that are suitable for fast products with $A$ can be used and experiments show that they can give a good quality approximation; see, for example, Section 9 in \cite{Martinsson20}. However most of the theoretical results are for Gaussian matrices. These results can also be applied to an orthonormal model, where the test matrix $\Omega$ is chosen to be the $Q$ factor from the QR decomposition of a Gaussian matrix $G$. The resulting approximation is the same as when using $G$ in exact arithmetic, but orthogonal matrices are preferred for stability in finite precision when $k$ is large \cite{Tropp2017fixed}. We comment on this further in Section~\ref{sec:choosing_shift_sketching}.  Algorithm~\ref{alg:nystrom_reg_id} is based on the stable implementation that appears in \cite{LiLinderman2017,Tropp2017fixed} (although we note that there is no specification of precision in \cite{LiLinderman2017,Tropp2017fixed}). The complexity of Algorithm~\ref{alg:nystrom_reg_id} is $\mathcal{O}(Tk+k^2n)$, where $T$ is the time needed to compute a matrix-vector product with $A$ \cite{Frangella2021}.

Various deterministic and probabilistic bounds for the exact approximation error 
\begin{equation}\label{eq:Nystrom_exact_error}
    E = A - A_N
\end{equation}
appear in the literature, e.g.,  \cite{GittensMahoney2016,Tropp2017fixed,Frangella2021}. Let $ W \Lambda W^T$ be an eignevalue decomposition of $A$ such that
\begin{equation}\label{eq:A_eigendecomp}
    A = W \Lambda W^T = \begin{bmatrix}
        W_1 & W_2
    \end{bmatrix} \begin{bmatrix}
        \Lambda_1 & \\ & \Lambda_2
    \end{bmatrix} \begin{bmatrix}
        W_1^T \\ W_2^T,
    \end{bmatrix}
\end{equation}
where $\Lambda = \textrm{diag}(\lambda_1, \lambda_2, \dots, \lambda_n)$ with $\lambda_i \geq \lambda_{i+1}$ and $\lambda_n \geq 0$,  $\Lambda_1 = \textrm{diag}(\lambda_1, \lambda_2, \dots, \lambda_k)$ and $\Lambda_2 = \textrm{diag}(\lambda_{k+1}, \lambda_{k+2}, \dots, \lambda_n)$. Then if $W_1^T \Omega$ is full rank and $A$ is accessed only once, \cite[Theorem 3]{GittensMahoney2016} shows that
\begin{equation}\label{eq:exact_error_determ_frob}
    \Vert E \Vert_F \leq \Vert \Lambda_2 \Vert_F + \Vert \Lambda^{1/2}_2 (W_2^T \Omega) (W_1^T \Omega)^{\dagger} \Vert_2 \left(\sqrt{2 \textrm{trace}(\Lambda_2)} + \Vert \Lambda^{1/2}_2 (W_2^T \Omega) (W_1^T \Omega)^{\dagger} \Vert_F \right).
\end{equation}
The deterministic bounds are pessimistic and the error estimate can be improved by considering the expected error or bounds that hold with high probability \cite{GittensThesis}. For example, a recent result in 
\cite{Frangella2021} bounds the expected exact approximation error of a rank $k \geq 4$ approximation $A_N$ obtained by Algorithm~\ref{alg:nystrom_reg_id} by
\begin{equation}\label{eq:bound_expected_error_FTU}
    \mathbb{E}\, \| A - A_N \|_2 \leq \min_{2 \leq p \leq k-2} \left( \left( 1 + \frac{2(k-p)}{p-1} \right) \lambda_{k-p+1} + \frac{2e^2k}{p^2-1} \sum_{j=k-p+1}^n \lambda_j \right),
\end{equation}
where $\lambda_i \geq \lambda_{i+1}$ are eigenvalues of $A$ and $e$ is the exponential constant.

The finite precision error analysis of randomised methods is usually missing from the literature, with the notable exceptions of \cite{Nakatsukasa2020} which shows that a stabilised generalised Nystr\"{o}m approximation is stable and \cite{connolly2022randomized}, which provides a general finite precision analysis of the randomised SVD. Note that analysis in \cite{Nakatsukasa2020} requires using two independent random sampling matrices, which is not the case in Algorithm~\ref{alg:nystrom_reg_id}. 
In the following subsection, we introduce a mixed precision variant of the stabilised single-pass Nystr\"{o}m approximation and analyze the error in finite precision.

\begin{algorithm}
\caption{Stabilised single-pass Nystr\"{o}m approximation for symmetric positive semidefinite $A$ in precisions $u$ and $u_p$}\label{alg:nystrom_reg_id}
\hspace*{\algorithmicindent} \textbf{Input:} symmetric positive semidefinite matrix $A \in \mathbb{R}^{n \times n}$ stored in precision $u_p$, a sketching matrix $\Omega \in \mathbb{R}^{n \times k}$ stored in  precision $u$\\
 \hspace*{\algorithmicindent} \textbf{Output:} $U \in \mathbb{R}^{n \times k}$ with orthonormal columns approximating eigenvectors of $A$, and diagonal $ \Theta \in \mathbb{R}^{k \times k}$ with approximations to the largest $k$ eigenvalues of $A$ on the diagonal
\begin{algorithmic}[1]
\State  $Y=A \Omega $ \Comment{compute in $u_p$, store in $u$} \label{step:nystrom_AQ}
\State Compute a shift $\nu$ \Comment{$u$} 
\State Shift $Y$: $Y_{\nu} = Y + \nu \Omega $ \Comment{$u$} \label{step:nystrom_shift}
\State $B = \Omega^T Y_{\nu}$\Comment{$u$} \label{step:nystrom_QTY}
\State Compute the upper triangular Cholesky factor $C = \textrm{chol}((B + B')/2)$ \Comment{$u$} \label{step:nystrom_chol}
\State Solve $F = Y_{\nu}/C$ \Comment{$u$} \label{step:nystrom_triangular}
\State Compute the economy size SVD $[U,\Sigma, \sim] = \textrm{svd}(F,0)$ \Comment{$u$} \label{step:nystrom_svd}
\State Remove the shift: $\Theta = \textrm{max}(0,\Sigma^2 - \nu I)$ \Comment{$u$} \label{step:remove_shift}
\end{algorithmic}
\end{algorithm}

\subsection{Finite precision analysis}
We analyze Algorithm~\ref{alg:nystrom_reg_id} which uses two precisions, a potentially lower precision with unit roundoff $u_p$ for computing the expensive product with $A$, and a working precision $u$ for all other steps of the algorithm. Throughout the paper, hats are used to denote the computed versions of quantities and thus the computed approximation is $\hA_N$. The finite precision error is denoted 
\begin{equation}\label{eq:Nystrom_finite_error}
    \E = A_N - \hA_N.
\end{equation}
We aim to bound the Frobenius norm of  $\E$ in order to give a bound on the total error of the approximation via
\begin{equation*}
\Vert A - \hA_N \Vert_F = \Vert A-A_N +A_N -\hA_N \Vert_F \leq \Vert E \Vert_F + \Vert \E \Vert_F. 
\end{equation*}

We use a standard model of floating point arithmetic; see, e.g., \cite[Section 2.2]{HighamBook}. For matrix $A$, $\kappa(A)$ will be used to denote the condition number $\kappa(A)=\Vert A^{\dagger} \Vert \Vert A \Vert$ in an indicated norm. Note that in our analysis, we do not account for oversampling. We consider a general sketching matrix $\Omega \in \mathbb{R}^{n \times k}$ and some small shift $\hnu$. Discussion on how these should be chosen based on the finite precision analysis is presented in Section~\ref{sec:choosing_shift_sketching}. 

Our analysis requires the following assumptions:
\begin{enumerate}
    \item the SVD in step~\ref{step:nystrom_svd} of Algorithm~\ref{alg:nystrom_reg_id} is computed exactly; \label{assumpt_first}
    \item $u \leq u_p$;
    \item no overflow or underflow occurs;
    \item $\kappa_2(\widetilde{B}) \ll u^{-1}$, where $\widetilde{B} = fl \left( \frac{1}{2} (\hB + \hB^T)\right)$;
    \item $\hnu$ and $\Omega$ are set so that 
    \begin{gather}
        \Omega^T (A+ \hnu I )\Omega \textrm{ is symmetric positive definite (SPD) }; \label{eq:assumption_shifted_spd} \\
        \Vert \left( \Omega^T (A+ \hnu I )\Omega \right)^{-1} \Delta_2 \Vert_F \leq  \Vert \left( \Omega^T (A+ \hnu I )\Omega \right)^{-1} \Vert_F \Vert \Delta_2 \Vert_F  < 1,   \label{eq:assumption_1st_order_is_OK}
\end{gather} \label{assumpt_last}
where $\Delta_2$ is the finite precision error accumulated in steps~\ref{step:nystrom_AQ} to \ref{step:nystrom_chol};
\item $\hnu \leq c(n,k) u_p \Vert A \Vert_F  \Vert \Omega \Vert_F^2$, where $c(n,k)$ is a constant that depends on $k$ and $n$; and \label{assumption_shift_bound}
\item the precisions $u_p$ and $u$ are chosen so that
\begin{align}
    \kappa_2(A_k + \hnu I_k) \widetilde{\kappa}( \Omega)^2 & \ll u_p^{-2} \quad \textrm{and} \label{cond:u_p_kappaAk}\\
   \kappa_2(A_k + \hnu I_k) \widetilde{\kappa}( \Omega)^2 & \ll u^{-1} \label{cond:u_kappaAk},
\end{align}
where $A_k$ is the best rank-$k$ approximation of $A$ and
\begin{equation}\label{eq:kappa_tld_omega}
\widetilde{\kappa}(\Omega) \coloneqq \Vert \Omega \Vert_F \Vert \left( W_1^T \Omega \right)^{\dagger} \Vert_2. 
\end{equation}
\label{assumpt_simplify_u_up_kappaAk}
\end{enumerate}
The first assumption does not have a significant effect on the final bound when a numerically stable algorithm is used to compute the SVD, but it simplifies the analysis; note that this same assumption is used in \cite{connolly2022randomized}. Assuming that the sketching is performed in a lower or the same precision as other computations allows us to make the analysis easier to read by ignoring terms of order $u^2$. 
Note that since the sketching step is the only time $A$ is accessed, $A$ may also be stored in precision $u_p$. We assume all other quantities are stored in precision $u$. Assuming no overflow or underflow is a standard assumption, although we note that overflow and underflow can become increasingly common when very low precisions are used. The assumption on the condition number of $\widetilde{B}$ is standard. 
Assumption~\ref{assumpt_last} is essential to our analysis and we investigate how $\Omega$ and $\hnu$ should be chosen to satisfy \eqref{eq:assumption_shifted_spd} and \eqref{eq:assumption_1st_order_is_OK} in Section~\ref{sec:satisfying_assumptions}. The bound for $\hnu$ ensures that the shift does not increase the approximation error significantly. The final assumption requires that the ideal rank-$k$ approximation is well conditioned in the precisions used in Algorithm~\ref{alg:nystrom_reg_id}, and
allows us to simplify the presentation of the bounds.

Following \cite{HighamBook}, we define 
\begin{equation*}
    \gamma_n^{(p)} = \frac{nu_p}{1-nu_p}, \quad
     \widetilde{\gamma}_n^{(p)} =  \frac{c n u_p}{1 - c n u_p},
\end{equation*}
where $c$ is a small constant independent of $n$. The superscript $p$ is omitted when referring to the terms with $u$ instead of $u_p$. The notations $\approx$ and $\lesssim$ are used when dropping  second order terms that are insignificant in comparison to other terms in the expression.

\subsection{Preliminary results}\label{sec:prel_lemma}
Before we delve into the finite precision analysis, we first explore the weighted pseudoinverse 
\begin{equation}\label{eq:weighted_pseudoinv_def}
   X_A^{\dagger} \coloneqq A X \left( X^T A X \right)^{\dagger} 
\end{equation}
of a matrix $X \in \mathbb{R}^{n \times k}$, where $A \in \mathbb{R}^{n \times n}$ is symmetric positive semidefinite. Bounds on $\Vert X_A^{\dagger} \Vert_2$ independent of $A$ are available \cite{stewart1989scaled,forsgren1996linear}, but these can be arbitrarily large. Alternatively, we can obtain a bound that depends on $A$. We do this by noting that 
\begin{equation*}
    A X \left( X^T A X \right)^{\dagger} = A^{1/2} \left( X^T A^{1/2} \right)^{\dagger}
\end{equation*} 
and considering the eigenvalue decomposition as in \eqref{eq:A_eigendecomp}. Then
\begin{equation}\label{eq:psinv_interim}
     \left\lVert A^{1/2} \left( X^T A^{1/2} \right)^{\dagger} \right\rVert_2 =  \left\lVert   \Lambda^{1/2} W^T \left( X^T W \Lambda^{1/2} W^T \right)^{\dagger}  \right\rVert_2 \leq \Vert  \Lambda^{1/2} \Vert_2 \left\lVert \left( X^T W \Lambda^{1/2} \right)^{\dagger}  \right\rVert_2.
\end{equation}
Recall that
 $   \left\lVert \left( X^T W \Lambda^{1/2} \right)^{\dagger}  \right\rVert_2 = 1/\sigma_{min}(X^T W \Lambda^{1/2})$,
where $\sigma_{min}(X^T W \Lambda^{1/2})$ denotes the smallest singular value of $X^T W \Lambda^{1/2}$. We can write \linebreak
 $   X^T W \Lambda^{1/2} = \begin{bmatrix}
       X^T W_1  \Lambda_1^{1/2} & X^T W_2 \Lambda_2^{1/2}
    \end{bmatrix} $
and using the singular value interlacing property and the inequality on the singular values of a product of two square matrices, we obtain
\begin{multline}\label{eq:sigma_min_XT_W_lambda}
    \sigma_{min} (X^T W \Lambda^{1/2}) \geq \sigma_{min} \left( X^T W_1 \Lambda_1^{1/2} \right) \geq  \sigma_{min} ( X^T W_1 ) \sigma_{min} ( \Lambda_1^{1/2} ) \\ 
    = \Vert ( X^T W_1 )^{\dagger} \Vert_2^{-1} \Vert \Lambda_1^{-1/2} \Vert_2^{-1}.
\end{multline}
Combining this with \eqref{eq:psinv_interim} and using $A_k = W_1 \Lambda_1 W_1^T$, we obtain
\begin{equation}\label{eq:weighted_pseudinv_bound}
    \Vert X_A^{\dagger} \Vert_2 \leq \kappa_2(A_k)^{1/2} \Vert ( X^T W_1 )^{\dagger} \Vert_2.
\end{equation}

A bound on $\Vert  \left( X^T A X \right)^{-1}  \Vert_2 $ is required in our further analysis. Since \linebreak $\sigma_i(X^T A X) = \sigma_i(X^T A^{1/2})^2$, we use \eqref{eq:sigma_min_XT_W_lambda} to obtain
\begin{equation}\label{eq:bound_inverse_XTAX}
    \Vert  \left( X^T A X \right)^{-1}  \Vert_2 \leq \Vert ( X^T W_1 )^{\dagger} \Vert_2^2 \Vert \Lambda_1^{-1/2} \Vert_2^2 = \Vert ( X^T W_1 )^{\dagger} \Vert_2^2 \Vert A_k^{\dagger} \Vert_2.
\end{equation}

\subsection{Finite precision error bound}
Our main result in this section is the  following theorem.

\begin{theorem}\label{th:finite_precision_error}
Let $A \in \mathbb{R}^{n \times n}$ be a symmetric positive semidefinite matrix stored in precision $u_p$, and $\hA_N$ be its approximation computed by Algorithm~\ref{alg:nystrom_reg_id} using precision $u_p$ in step~\ref{step:nystrom_AQ} and precision $u \leq u_p$ in other steps. If the assumptions \ref{assumpt_first} - \ref{assumpt_simplify_u_up_kappaAk} are satisfied, then the total approximation error is bounded as
\begin{equation}\label{eq:total_bound_simplified}
    \Vert A - \hA_N \Vert_F \lesssim   \Vert (A + \hnu I) - (A+ \hnu I)_N \Vert_F  +  k^{1/2} \widetilde{\gamma}_n^{(p)} \kappa_2( A_k + \hnu I_k)  \widetilde{\kappa}(\Omega)^2 \Vert A \Vert_F,
\end{equation} 
where $A_k$ is the best rank-$k$ approximation of $A$, $(A+ \hnu I)_N$ is the exact Nystr\"{o}m approximation of $A+ \hnu I$ and $\widetilde{\kappa}(\Omega)$ is defined in \eqref{eq:kappa_tld_omega}.
\end{theorem}

We briefly comment on the bounds before stating the proof. The first term on the right-hand side bounds the error of the exact Nystr\"{o}m approximation of the shifted matrix $A + \hnu I$. The bounds for the exact Nystr\"{o}m approximation error of $A$ that depend on the eigenvalues of $A$ can be easily adapted to bound the error of approximating $A + \hnu I$ as the eigenvalues are shifted by $\hnu$ and the extra error due to the shift is absorbed by the second term in \eqref{eq:total_bound_simplified}. We now continue with the proof.

\begin{proof}
The roadmap of the proof is as follows. First, we account for the finite precision error terms in every step of the algorithm, bound their norm and track their influence on the subsequent computations. We then backtrack all the steps of the algorithm to find out how the finite precision error influences the computed approximation $\hA_N$. In order to express $\hA_N$ via $A_N$ we approximate a perturbed inverse of $\Omega^T (A + \hnu I) \Omega$ to the first order. The final bound for the norm of the total finite precision error makes use of the bounds for the norms of the weighted pseudoinverse and the finite precision error terms obtained in the first part of the proof. We simplify the bound under reasonable assumptions. The proof is provided here with some extra details and cumbersome expressions deferred to Appendix \ref{sec:app_fp}.

\emph{Part 1. Step-by-step analysis.}
In step~\ref{step:nystrom_AQ}, we compute
\begin{gather}
    \hY = A \Omega + \Delta, \textrm{ where } \label{eq:Yhat} \\
    \Vert \Delta \Vert_F \leq \gamma_n^{(p)} \Vert A \Vert_F \Vert \Omega \Vert_F. \label{eq:delta}
\end{gather}
Applying the shift $\hnu$ gives
\begin{gather}
    \hY_{\nu} = \hY + \hnu \Omega + \Delta_{\nu}, \textrm{ where } \label{eq:Yshifted_compputed}\\
    \Vert \Delta_{\nu} \Vert_F \leq u \Vert \hY + \hnu \Omega \Vert_F \leq \gamma_n (1+ \gamma_n^{(p)}) \Vert A \Vert_F \Vert \Omega \Vert_F + \gamma_n \hnu \Vert \Omega \Vert_F, \label{eq:delta_nu}
\end{gather}
and the second inequality is due to \eqref{eq:Yhat} and \eqref{eq:delta}. We also require a bound on $\Vert \hY_{\nu} \Vert_F$ in the following analysis, which can be given as 
\begin{equation}\label{eq:Yhat_nu_bound}
    \Vert \hY_{\nu} \Vert_F \leq  (1 + \gamma_n^{(p)} + \gamma_n + \gamma_n \gamma_n^{(p)})\Vert A \Vert_F \Vert \Omega \Vert_F + (1+\gamma_n)\hnu \Vert \Omega \Vert_F. 
\end{equation}
In step~\ref{step:nystrom_QTY}, we obtain a $k \times k$ matrix
\begin{gather}
    \hB = \Omega^T \hY_{\nu} + \Delta_B, \textrm{ where } \label{eq:B=QTY}\\
    \Vert \Delta_B \Vert_F \leq  \gamma_n \Vert \Omega \Vert_F \Vert \hY_{\nu} \Vert_F  \leq  \gamma_n (1 + \gamma_n + \gamma_n^{(p)}  )\Vert A \Vert_F \Vert \Omega \Vert_F^2 + \gamma_n (1 + \gamma_n )\hnu \Vert \Omega \Vert_F^2 \nonumber \\
    \lesssim  (\gamma_n  + \gamma_n \gamma_n^{(p)}  )\Vert A \Vert_F \Vert \Omega \Vert_F^2 + \gamma_n \hnu \Vert \Omega \Vert_F^2. \label{eq:delta_B}
\end{gather}
The algorithm continues by forming a symmetric matrix
\begin{gather}
    \widetilde{B} = \frac{1}{2} (\hB + \hB^T) + \Delta_{s}, \textrm{ where } \label{eq:forming_Btld} \\
    \Vert  \Delta_{s} \Vert_F \leq u \Vert \hB \Vert_F \lesssim  (\gamma_n  + \gamma_n \gamma_n^{(p)}  )\Vert A \Vert_F \Vert \Omega \Vert_F^2 + \gamma_n \hnu \Vert \Omega \Vert_F^2, \label{eq:delta_s}
\end{gather}
and computing its Cholesky decomposition. From \cite[Theorem 10.3]{HighamBook} we have that
\begin{gather}
     \widetilde{B} + \Delta_{Ch} = \hC^T \hC, \textrm{ where } \label{eq:chol}\\
     \Vert \Delta_{Ch} \Vert_F \leq \gamma_{k+1} \Vert \hC  \Vert_F^2 \label{eq:delta_ch}
\end{gather}
and $\hC$ is the computed upper triangular factor of the Cholesky decomposition. We note that to successfully compute the Cholesky decomposition, we need to ensure that $ \widetilde{B}$ is positive definite. This is related to satisfying the condition \eqref{eq:assumption_1st_order_is_OK} and we discuss it in Section~\ref{sec:successful_cholesky}. We bound $\Vert \hC \Vert_F$ in terms of $A$ and $\Omega$ as (the reader is referred to Appendix~\ref{append:cholesky_factor_bound} for details)
\begin{align}
    \Vert \hC \Vert_F^2 \leq & \, k \left(1  + 2^{3/2} k (3k+1)u \kappa(\widetilde{B})   \right)^2    \left(1 + \gamma_n^{(p)} + 3 \gamma_n +   3 \gamma_n \gamma_n^{(p)} \right) \Vert A \Vert_F \Vert \Omega \Vert_F^2 \nonumber \\
     & \, +k \left(1  + 2^{3/2} k (k+1)u \kappa(\widetilde{B})   \right)^2  \left(1+  3 \gamma_n \right) \hnu \Vert \Omega \Vert_F^2. \label{eq:C_hat_frob_bound}
\end{align}
The algorithm continues with step~\ref{step:nystrom_triangular}, where we solve $n$ triangular systems of the form
\begin{equation*}
    \hC^T f_j^T = \hy_j^T,
\end{equation*}
where $f_j$ and $\hy_j$ are $j$th rows of $F$ and $\hY_{\nu}$, respectively. The computed $\hf_j$ then satisfies \cite[Theorem 8.5]{HighamBook}
\begin{gather*}
    ( \hC^T + \Delta \hC^T_j ) \hf_j^T = \hy_j^T, \textrm{ where} \\
    \vert \Delta \hC_j \vert \leq \gamma_k \vert \hC \vert. 
\end{gather*}
In the following part of the proof, where we reconstruct $\hA_N$, we make use of the error term
\begin{equation*}
     \Delta_{FC}\coloneqq \hF \hC - \hY_{\nu}.
\end{equation*}
We bound its Frobenius norm in terms of $\hY_{\nu}$. Note that
\begin{equation*}\label{eq:triangsolv_bckw_err}
   \Vert \Delta_{FC} \Vert_F =  \Vert \hF \hC - \hY_{\nu} \Vert_F = \left\lVert \begin{bmatrix}
       \hf_1 \Delta \hC_1 \\ \dots \\ \hf_n \Delta \hC_n 
   \end{bmatrix}\right\rVert_F = \left( \sum_{j=1}^n \Vert \hf_j \Delta \hC_j \Vert_F^2 \right)^{1/2} \leq   \gamma_k \Vert \hF \Vert_F \Vert \hC \Vert_F.
\end{equation*}
Using \cite[Eq. 8.2]{HighamBook}, we can write
\begin{gather*}
        \hF = F + \Delta_F, \textrm{ where} \label{eq:Fhat}\\
        F = \hY_{\nu} \hC^{-1}, \label{eq:F=YnuCinv} \\
    \Vert \Delta_F \Vert_F \leq  \frac{\gamma_k k^{1/2} \kappa_2(\hC)}{1 - \gamma_k k^{1/2} \kappa_2(\hC)} \Vert F \Vert_F, \label{eq:delta_F}
\end{gather*}
and this gives
\begin{equation}\label{eq:Delta_FC_bound}
    \Vert \Delta_{FC} \Vert_F \leq \gamma_k \kappa_F(\hC ) \left( 1 + \frac{\gamma_k k^{1/2} \kappa_2(\hC)}{1 - \gamma_k k^{1/2} \kappa_2(\hC)} \right) \Vert \hY_{\nu} \Vert_F.
\end{equation}
We continue by considering the penultimate step of the algorithm. The SVD is assumed to be computed exactly, thus
\begin{equation}\label{eq:F=svd}
    \hF = U \Sigma V^T,
\end{equation}
where $\Sigma = \textrm{diag}(\sigma_1, \sigma_2, \dots, \sigma_k)$. 
In the final step of the algorithm we remove the shift $\hnu$ and set all the computed eigenvalues that are smaller than the shift to zero. Then the computed approximation is 
\begin{equation*}
    \hA_N = U \Theta U^T =  U \left( \Sigma^2 - \hnu I \right) U^T  - \Delta_r = U \Sigma^2 U^T - \hnu U U^T - \Delta_r,
\end{equation*}
where $\Delta_r$ accounts for the eigenvalues set to zero. If we have $\sigma_1^2 \geq \dots \geq \sigma_{k-j}^2 \geq \hnu$ and $ \hnu > \sigma_{k-j+1}^2 \geq \dots \geq \sigma_k^2$, that is, we set the smallest $j$ eigenvalues to zero, then 
\begin{gather}
    \Delta_r = U \textrm{diag}(\underbrace{0, \dots, 0}_\text{k-j}, \sigma_{k-j+1}^2 - \hnu, \dots, \sigma_k^2 - \hnu )  U^T \textrm{ and } \nonumber \\
    \Vert \Delta_r \Vert_F \leq j^{1/2} \hnu \label{eq:delta_r_bound},
\end{gather}
where we use the fact that $0 \leq \sigma_{k-j+i}^2 < \hnu$ for $i=1,\dots,j$. 

\emph{Part 2. The computed approximation.}
We can now backtrack all the computations and obtain
\begin{equation}
    \hA_N =  \left( (A+ \hnu I) \Omega + \Delta_1 \right) \left( \Omega^T (A+ \hnu I)\Omega  + \Delta_2 \right)^{-1} \left( (A+ \hnu I) \Omega + \Delta_1 \right)^T - \hnu U U^T - \Delta_r, \label{eq:backtrack_AhatN} 
\end{equation}
where
\begin{gather*}
   \Delta_1 =  \Delta  + \Delta_{\nu}  + \Delta_{FC}, \quad\text{and}\\
   \Delta_2 = \frac{1}{2} \left(  \Omega^T (  \Delta  + \Delta_{\nu}) + 
           ( \Delta + \Delta_{\nu})^T \Omega + \Delta_B + \Delta_B^T \right) + \Delta_{s} + \Delta_{Ch}.
\end{gather*}
The step-by-step derivation is supplied in Appendix~\ref{append:backtracking_ANhat}. A crucial step in our analysis is proceeding with a first order approximation of the inverse. This can be done when the assumptions \eqref{eq:assumption_shifted_spd} and \eqref{eq:assumption_1st_order_is_OK} hold.
The approximation is
\begin{equation*}
\resizebox{.99\hsize}{!}{$\left( \Omega^T (A+ \hnu I )\Omega  + \Delta_2 \right)^{-1} \approx \left( \Omega^T (A+ \hnu I )\Omega \right)^{-1}  - \left( \Omega^T (A+ \hnu I )\Omega \right)^{-1} \Delta_2 \left( \Omega^T (A+ \hnu I )\Omega \right)^{-1}.$}
\end{equation*}
Combining this with \eqref{eq:backtrack_AhatN} gives
\begin{align*}
\hA_N \approx  & \, (A+ \hnu I ) \Omega \left( \Omega^T (A+ \hnu I )\Omega \right)^{-1} \Omega^T (A+ \hnu I )^T  +  (A+ \hnu I ) \Omega \left( \Omega^T (A+ \hnu I )\Omega \right)^{-1} \Delta_1^T \\
						& \, +  \Delta_1 \left( \Omega^T (A+ \hnu I )\Omega \right)^{-1} \Omega^T (A+ \hnu I )   +  \Delta_1 \left( \Omega^T (A+ \hnu I )\Omega \right)^{-1}  \Delta_1 \\
						& \, - (A+ \hnu I) \Omega \left( \Omega^T (A+ \hnu I )\Omega \right)^{-1} \Delta_2 \left( \Omega^T (A+ \hnu I )\Omega \right)^{-1} \Omega^T (A+ \hnu I)^T \\
						& \, -  (A+ \hnu I ) \Omega \left( \Omega^T (A+ \hnu I ) \Omega \right)^{-1} \Delta_2 \left( \Omega^T (A+ \hnu I )\Omega \right)^{-1}\Delta_1^T \\
						& \, - \Delta_1 \left( \Omega^T (A+ \hnu I )\Omega \right)^{-1} \Delta_2 \left( \Omega^T (A+ \hnu I )\Omega \right)^{-1} \Omega^T (A+ \hnu I)  \\
						& \, -  \Delta_1 \left( \Omega^T (A+ \hnu I)\Omega \right)^{-1} \Delta_2 \left( \Omega^T (A+ \hnu I)\Omega \right)^{-1}  \Delta_1 \\
						& \, - \hnu U U^T - \Delta_r.
\end{align*}
We notice that the approximation involves terms with the weighted pseudoinverse $\Omega^{\dagger}_{A+ \hnu I}$ and the inverse of $\Omega^T (A+ \hnu I)\Omega$, and a Nystr\"{o}m approximation of the shifted matrix $A+ \hnu I$, that is,
\begin{equation*}
(A+ \hnu I)_N \coloneqq (A+ \hnu I ) \Omega \left( \Omega^T (A+ \hnu I )\Omega \right)^{-1} \Omega^T (A+ \hnu I )^T.
\end{equation*}
Moving $(A+ \hnu I)_N$ to the left hand side,  taking norms and using  \eqref{eq:assumption_1st_order_is_OK} gives
\begin{align}
\Vert \hA_N - (A+ \hnu I)_N \Vert_F
        \leq  & \,  4 \Vert  (A+ \hnu I ) \Omega \left( \Omega^T (A+ \hnu I )\Omega \right)^{-1}\Vert_F  \Vert  \Delta_1 \Vert_F  \nonumber \\
						 & \, + 2 \Vert  \left( \Omega^T (A+ \hnu I )\Omega \right)^{-1}  \Vert_F \Vert \Delta_1 \Vert^2_F  \nonumber \\
						 & \, + \Vert  (A+ \hnu I) \Omega \left( \Omega^T (A+ \hnu I )\Omega \right)^{-1} \Vert^2_F \Vert\Delta_2\Vert_F  \nonumber \\
						  & \, + \Vert  \hnu U U^T \Vert_F  + \Vert  \Delta_r \Vert_F.
         \label{eq:computedNystr_exactShiftedN}
\end{align}
We now focus on $\Delta_2$ and $\Delta_1$. We use \eqref{eq:delta}, \eqref{eq:delta_nu}, \eqref{eq:delta_B}, \eqref{eq:delta_ch}, and \eqref{eq:C_hat_frob_bound} to obtain
\begin{align}
    \Vert \Delta_2 \Vert_F = &\,  \Vert \frac{1}{2} \left(  \Omega^T (  \Delta  + \Delta_{\nu}) + 
           ( \Delta + \Delta_{\nu})^T \Omega + \Delta_B + \Delta_B^T \right) + \Delta_{s} + \Delta_{Ch}  \Vert_F \nonumber \\
           \leq &\, \Vert \Omega \Vert_F \left(  \Vert \Delta \Vert_F  + \Vert \Delta_{\nu} \Vert_F \right) + 
             \Vert \Delta_B \Vert_F + \Vert \Delta_s  \Vert_F + \Vert \Delta_{Ch}  \Vert_F \nonumber \\
             \lesssim &\, \left( \gamma_n^{(p)} + 3 \gamma_n + k \gamma_{k+1} + 3 \gamma_n \gamma_n^{(p)} + k \gamma_{k+1} \gamma_n^{(p)}  \right) \Vert A \Vert_F \Vert \Omega \Vert_F^2 \nonumber \\
             &\, + \left( 3 \gamma_n + k \gamma_{k+1} \right) \hnu \Vert \Omega \Vert_F^2, \label{eq:delta2_bound}
\end{align}
where we have ignored the $\mathcal{O}(u^2)$ terms (which include all the terms with $\kappa(\widetilde{B})$). Considering $\Delta_1$, using \eqref{eq:delta}, \eqref{eq:delta_nu}, and \eqref{eq:Delta_FC_bound} for the first inequality, and \eqref{eq:Yhat_nu_bound} and ignoring $\gamma_n \gamma_k$ terms for the second inequality, we have
\begin{align}
   \Vert \Delta_1 \Vert_F = & \, \Vert \Delta  + \Delta_{\nu}  + \Delta_{FC}  \Vert_F \nonumber \\
   \leq & \, \left( \gamma_n + \gamma_n^{(p)} + \gamma_n \gamma_n^{(p)} \right) \Vert A \Vert_F \Vert \Omega \Vert_F \nonumber \\
   & \, +  \gamma_n \hnu \Vert \Omega \Vert_F + \gamma_k \kappa_F(\hC) \left( 1 + \frac{\gamma_k k^{1/2} \kappa_2(\hC)}{1 - \gamma_k k^{1/2} \kappa_2(\hC)} \right) \Vert \hY_{\nu} \Vert_F  \nonumber \\
    \lesssim  & \, \left( \gamma_n + \gamma_n^{(p)} + \gamma_n \gamma_n^{(p)} +  \left( 1  + \gamma_n^{(p)}\right) \gamma_k  \kappa_F(\hC)  \right) \Vert A \Vert_F \Vert \Omega \Vert_F \nonumber \\
   & \, + \left( 1 +  \gamma_n^{(p)}\right)\gamma_k  \kappa_F(\hC) \frac{\gamma_k k^{1/2} \kappa_2(\hC)}{1 - \gamma_k k^{1/2} \kappa_2(\hC)}  \Vert A \Vert_F \Vert \Omega \Vert_F \nonumber \\
  & \, + \left(  \gamma_n  + \gamma_k \kappa_F(\hC)  \left( 1 + \frac{\gamma_k k^{1/2} \kappa_2(\hC)}{1 - \gamma_k k^{1/2} \kappa_2(\hC)} \right)  \right)\hnu \Vert \Omega \Vert_F. \label{eq:delta1_bound}
\end{align}
As we detail in Appendix~\ref{append:cholesky_factor_condition_number}, $\kappa_2(\hC)$ can be expressed via quantities depending on $A$ and $\Omega$ as
\begin{align}
    \kappa_2(\hC) \leq & \ \left( \frac{1+ \epsilon}{1 - \epsilon \kappa_2(\Omega^T \left(A +\hnu I \right) \Omega)}\right)^{1/2} \kappa_2( A_k + \hnu I_k)^{1/2}  \widetilde{\kappa}(\Omega), \label{eq:kappahC_bound}
\end{align}
where $\widetilde{\kappa}(\Omega)$ is defined in \eqref{eq:kappa_tld_omega} and
\begin{equation}\label{eq:epsilon_kappaC}
    \epsilon = \frac{\Vert  \Delta_2 \Vert_2}{\Vert \Omega^T \left(A + \hnu I \right) \Omega \Vert_2}. 
\end{equation}

Combining \eqref{eq:weighted_pseudinv_bound},  \eqref{eq:bound_inverse_XTAX}, \eqref{eq:delta_r_bound}, \eqref{eq:computedNystr_exactShiftedN}, \eqref{eq:delta2_bound}, and \eqref{eq:delta1_bound}
gives
\begin{align}
\quad &
\begin{multlined}[t]
    \Vert \hA_N - (A+ \hnu I)_N \Vert_F \leq \\  4 k^{1/2} \left( \gamma_n + \gamma_n^{(p)} + \gamma_n \gamma_n^{(p)} +  \left( 1  + \gamma_n^{(p)}\right) \gamma_k  \kappa_F(\hC) \left( 1 +  \frac{\gamma_k k^{1/2} \kappa_2(\hC)}{1 - \gamma_k k^{1/2} \kappa_2(\hC)} \right) \right) \\
     \times \kappa_2(A_k + \hnu I)^{1/2} \widetilde{\kappa}( \Omega) \Vert A \Vert_F  \end{multlined} \nonumber  \\
						 & + k^{1/2} \left( \gamma_n^{(p)} + 3 \gamma_n + k \gamma_{k+1} + 3 \gamma_n \gamma_n^{(p)} + k \gamma_{k+1} \gamma_n^{(p)}  \right) \kappa_2(A_k + \hnu I) \widetilde{\kappa}( \Omega)^2 \Vert A \Vert_F  \nonumber \\
 &  + \begin{multlined}[t]  
 2 k^{1/2}\left( \gamma_n + \gamma_n^{(p)} + \gamma_n \gamma_n^{(p)} +  \left( 1  + \gamma_n^{(p)}\right) \gamma_k  \kappa_F(\hC) \left( 1 +  \frac{\gamma_k k^{1/2} \kappa_2(\hC)}{1 - \gamma_k k^{1/2} \kappa_2(\hC)} \right) \right)^2 \\
  \times \Vert (\Lambda_k + \hnu I_k)^{-1} \Vert_2 \widetilde{\kappa}( \Omega)^2 \Vert A \Vert_F^2   \end{multlined} \nonumber \\ 
       &  + k^{1/2}\left(  \gamma_n  + \gamma_k \kappa_F(\hC)  \left( 1 + \frac{\gamma_k k^{1/2} \kappa_2(\hC)}{1 - \gamma_k k^{1/2} \kappa_2(\hC)} \right)  \right)\hnu \kappa_2(A_k + \hnu I)^{1/2} \widetilde{\kappa}( \Omega) \nonumber \\
      &   + 2 k^{1/2} \left(  \gamma_n  + \gamma_k \kappa_F(\hC)  \left( 1 + \frac{\gamma_k k^{1/2} \kappa_2(\hC)}{1 - \gamma_k k^{1/2} \kappa_2(\hC)} \right)  \right)^2 \hnu^2 \Vert (\Lambda_k + \hnu I_k)^{-1} \Vert_2 \widetilde{\kappa}( \Omega)^2 \nonumber \\
       &     +  k^{1/2} \left( 3 \gamma_n + k \gamma_{k+1} \right) \hnu  \kappa_2(A_k + \hnu I) \widetilde{\kappa}( \Omega)^2  \nonumber \\
		& + 2 k^{1/2} \hnu. \label{eq:finite_prec_error_full_bound} 
\end{align}
Using assumption~\ref{assumpt_simplify_u_up_kappaAk}, we can simplify \eqref{eq:finite_prec_error_full_bound} as detailed in Appendix~\ref{append:simplifying_bound}. We then have 
\begin{equation}\label{eq:finite_precision_bound_simplified}
    \Vert \hA_N - (A+ \hnu I)_N \Vert_F \lesssim  k^{1/2} \widetilde{\gamma}_n^{(p)} \kappa_2( A_k + \hnu I_k)  \widetilde{\kappa}(\Omega)^2 \Vert A \Vert_F.
\end{equation}
Notice that \eqref{eq:finite_precision_bound_simplified} gives us a bound on the error of the computed Nystr\"{o}m approximation of $A$ and the exact Nystr\"{o}m approximation of the shifted matrix. In this case, our ultimate goal to bound the total approximation error can be achieved via
\begin{align*}
\Vert A - \hA_N \Vert = & \, \Vert A - \hnu I + \hnu I- (A+ \hnu I)_N + (A+ \hnu I)_N - \hA_N \Vert \\
\leq & \,  \Vert (A + \hnu I) - (A+ \hnu I)_N \Vert + \Vert \hA_N - (A+ \hnu I)_N  \Vert + \hnu \Vert I \Vert.
\end{align*}
Thus, using this with \eqref{eq:finite_precision_bound_simplified} and assumption~\ref{assumption_shift_bound} we achieve the required result.

\end{proof}

\subsection{A practical heuristic}\label{sec:heuristic}
We note that the bound in Theorem~\ref{th:finite_precision_error} overestimates the total error. However its structure, that is, it being a sum of the exact approximation error and an additional term for the finite precision error, gives us insight into when setting $u_p$ to lower than the working precision may be appropriate. The finite precision error may be essentially ignored if $k^{1/2} \widetilde{\gamma}_n^{(p)} \kappa_2( A_k + \hnu I_k)  \widetilde{\kappa}(\Omega)^2 \Vert A \Vert_F \lesssim \Vert (A + \hnu I) - (A+ \hnu I)_N \Vert_F$. The exact approximation error $\Vert (A + \hnu I) - (A+ \hnu I)_N \Vert_F$ is expected to decrease when the rank $k$ of the approximation is increased, and hence the effect of the finite precision error may be important for large-rank approximations.

We can roughly estimate for which values of $u_p$ $\Vert \E \Vert_F$ stays smaller than $\Vert E \Vert_F$. 
We assume that the shift $\hnu$ is small enough to be ignored and replace $\Vert E \Vert_F$ and $\Vert \E \Vert_F$ with the bounds \eqref{eq:exact_error_determ_frob} and \eqref{eq:finite_precision_bound_simplified}. We further simplify \eqref{eq:exact_error_determ_frob} using $\Vert \Lambda^{1/2}_2 (W_2^T \Omega) (W_1^T \Omega)^{\dagger} \Vert_2 \leq \Vert \Lambda^{1/2}_2 \Vert_2  \Vert (W_2^T \Omega) \Vert_2  \Vert (W_1^T \Omega)^{\dagger} \Vert_2 $ and thus bound
\begin{equation*}
    \Vert E \Vert_F \leq c k^{1/2} \Vert \Lambda^{1/2}_2 \Vert_F^2 \Vert \Omega \Vert_F^2 \Vert (W_1^T \Omega)^{\dagger} \Vert_2^2,
\end{equation*}
where $c$ is a small constant. Requiring \eqref{eq:finite_precision_bound_simplified} to be smaller than this, using the rule of thumb that $n$ can be replaced by $n^{1/2}$ \cite{HighamBook}, and ignoring $\hnu$ gives the constraint
\begin{equation}\label{eq:frob_heuristic_full}
    u_p \ll n^{-1/2} \frac{\lambda_k}{\lambmax} \frac{\sum_{i=k+1}^n \lambda_i}{(\sum_{j=1}^n \lambda_j^2)^{1/2}}.
\end{equation}
Note that we used large overestimates of both $\Vert \E \Vert_F$ and $\Vert E \Vert_F$ to obtain \eqref{eq:frob_heuristic_full}, so this should not be interpreted as a mathematically rigorous condition. Also, while it may be possible to compute or estimate $\lambda_k$ and $\lambmax$, estimating the fraction $\sum_{i=k+1}^n \lambda_i/ (\sum_{j=1}^n \lambda_j^2)^{1/2}$ may be not achievable in practice. In this case, one may choose to omit this term and use the heuristic 
\begin{equation}
    \label{eq:frob_heuristic_no-eig-sum}
    u_p \ll n^{-1/2} \frac{\lambda_k}{\lambmax}.
\end{equation}
In Section \ref{sec:numerics} we demonstrate that, although not a rigorous constraint, this heuristic often gives a good indication of values of $u_p$ that can be chosen without significant affecting approximation quality. 

\subsection{Satisfying the assumptions}\label{sec:satisfying_assumptions}
The validity of our analysis depends on the conditions \eqref{eq:assumption_shifted_spd} and \eqref{eq:assumption_1st_order_is_OK}.
We tackle \eqref{eq:assumption_shifted_spd} by considering the smallest eigenvalue of $\Omega^T \left( A + \hnu I \right) \Omega$ and showing that it is larger than zero, and thus the matrix in question is SPD. 
We use Weyl's inequality and obtain
\begin{equation*}
    \lambda_{min}\left(\Omega^T \left( A + \hnu I \right) \Omega \right) \geq \lambda_{min} (\Omega^T A \Omega ) + \lambda_{min} ( \hnu \Omega^T \Omega  ).
\end{equation*}
Combining this with \eqref{eq:bound_inverse_XTAX} and the fact that $\lambda_i (\Omega^T \Omega  ) = \sigma_i (\Omega )^2$ gives
\begin{equation}\label{eq:mineig_QTAnuQ2}
    \lambda_{min}\left(\Omega^T \left( A + \hnu I \right) \Omega \right) \geq \sigma_{min} ( \Omega^T W_1 )^2 \lambda_k(A) + \hnu \sigma_{min} (\Omega)^2.
\end{equation}
All the quantities on the right hand side of the bound are non-negative and even in the case when $\lambda_k(A) = 0$ the distance between the smallest eigenvalue of $\Omega^T \left( A + \hnu I \right) \Omega$ and zero depends on the shift $\hnu$ and the smallest singular value of the sketching matrix $\Omega$.

We now consider condition \eqref{eq:assumption_1st_order_is_OK}. This is done by showing that
\begin{align}
    \left \lVert \left[ \Omega^T \left( A + \hnu I \right) \Omega \right]^{-1} \Delta_2 \right \rVert_2 \leq  & \, \left \lVert \left[ \Omega^T \left( A + \hnu I \right) \Omega \right]^{-1} \right \rVert_2  \left \lVert \Delta_2 \right \rVert_2 \nonumber \\
    \leq & \, \left \lVert \left[ \Omega^T \left( A + \hnu I \right) \Omega \right]^{-1} \right \rVert_2  \left \lVert \Delta_2 \right \rVert_F < 1. \nonumber
\end{align}
Using $\left\lVert \left( \Omega^T \left( A + \hnu I \right) \Omega \right)^{-1} \right\rVert_2  = \frac{1}{\lambda_{min}\left( \Omega^T \left( A + \hnu I \right) \Omega \right)}$, \eqref{eq:mineig_QTAnuQ2}, and \eqref{eq:delta2_bound} we have
\begin{multline}\label{eq:bound_shifted_inverse_delta2} 
    \left\lVert \left( \Omega^T \left( A + \hnu I \right) \Omega \right)^{-1} \right\rVert_2  \Vert \Delta_2 \Vert_F \leq  \\
    \frac{\left( \gamma_n^{(p)} + 3 \gamma_n +  k \gamma_{k+1} + 3 \gamma_n \gamma_n^{(p)} +  k \gamma_{k+1} \gamma_n^{(p)}  \right) \Vert A \Vert_F \Vert \Omega \Vert_F^2 }{ \lambda_k(A) \sigma_{min} ( W_1^T \Omega )^2 + \hnu \sigma_{min} (\Omega )^2} \\
    \, + \frac{ \left( 3 \gamma_n + k \gamma_{k+1} \right) \hnu \Vert \Omega \Vert_F^2 }{ \lambda_k(A) \sigma_{min} ( W_1^T \Omega)^2 + \hnu \sigma_{min} (\Omega )^2}.   
\end{multline}   
Thus to satisfy \eqref{eq:assumption_1st_order_is_OK} we require that
\begin{multline*}
    \left( \gamma_n^{(p)} + 3 \gamma_n +  k \gamma_{k+1} + 3 \gamma_n \gamma_n^{(p)} +  k \gamma_{k+1} \gamma_n^{(p)}  \right) \Vert A \Vert_F \Vert \Omega \Vert_F^2 + \left( 3 \gamma_n + k \gamma_{k+1} \right) \hnu \Vert \Omega \Vert_F^2 \\
    < \lambda_k(A) \sigma_{min} ( W_1^T \Omega )^2 + \hnu \sigma_{min} (\Omega )^2.
\end{multline*}
Moving the shift to the right-hand side gives
\begin{multline*}
    \left( \gamma_n^{(p)} + 3 \gamma_n + k \gamma_{k+1} + 3 \gamma_n \gamma_n^{(p)} +  k \gamma_{k+1} \gamma_n^{(p)}  \right) \Vert A \Vert_F \Vert \Omega \Vert_F^2 \\
    < \lambda_k(A) \sigma_{min} ( W_1^T \Omega)^2 + \hnu \left( \sigma_{min} (\Omega )^2 - \left( 3 \gamma_n + k \gamma_{k+1} \right)  \Vert \Omega \Vert_F^2  \right).
\end{multline*}
Ideally, this condition should be satisfied independently of $\lambda_k$ and $W_1$, and hence we require
\begin{multline}\label{eq:condition_shift_sketch}
    \left( \gamma_n^{(p)} + 3 \gamma_n +  k \gamma_{k+1} + 3 \gamma_n \gamma_n^{(p)} +  k \gamma_{k+1} \gamma_n^{(p)}  \right) \Vert A \Vert_F \Vert \Omega \Vert_F^2  \\
    < \hnu \left( \sigma_{min} (\Omega )^2 - \left( 3 \gamma_n + k \gamma_{k+1} \right)  \Vert \Omega \Vert_F^2  \right).
\end{multline}
We thus need to choose the sketching matrix $\Omega$ and the shift $\hnu$ accordingly. We address these points in Section~\ref{sec:choosing_shift_sketching}.

\subsubsection{Success of the Cholesky decomposition}\label{sec:successful_cholesky}
A potential breaking point of Algorithm~\ref{alg:nystrom_reg_id} is the Cholesky decomposition in step~\ref{step:nystrom_chol}. We assume that $\kappa(\widetilde{B})\ll u^{-1}$ and thus the Cholesky factorization runs successfully if $\widetilde{B}$ is SPD. The shift $\nu$ was introduced in \cite{LiLinderman2017} to ensure this. We further show that $\widetilde{B}$ is SPD and hence the success of the Cholesky decomposition is guaranteed if conditions \eqref{eq:assumption_shifted_spd} and \eqref{eq:assumption_1st_order_is_OK} are satisfied. In step~\ref{step:nystrom_chol}, we compute the Cholesky decomposition of
\begin{align*}
    \widetilde{B} = & \, \frac{1}{2}\left(\hB + \hB^T \right) + \Delta_s \\
                =   & \, \Omega^T \left( A + \hnu I \right) \Omega + \frac{1}{2} \left(  \Omega^T (  \Delta  + \Delta_{\nu}) + 
           ( \Delta + \Delta_{\nu})^T \Omega + \Delta_B + \Delta_B^T \right) + \Delta_{s} \\
            =   & \, \Omega^T \left( A + \hnu I \right) \Omega + ( \Delta_2 - \Delta_{Ch}). 
\end{align*}
If the SPD condition \eqref{eq:assumption_shifted_spd} holds, then $\Omega^T \left( A + \hnu I \right) \Omega$ has a Cholesky decomposition. We can thus use \cite[Theorem 1.4]{sun1991perturbation} (as in \eqref{eq:cholesky_condition}) to show that if 
    \begin{equation}\label{eq:cholesky_ex_condition}
        \left\lVert \left( \Omega^T \left( A + \hnu I \right) \Omega \right)^{-1} \right\rVert_2  \Vert \Delta_2 - \Delta_{Ch} \Vert_F < 1, 
    \end{equation}
then $\widetilde{B}$ has a Cholesky decomposition. Notice that
\begin{equation*}
    \left\lVert \left( \Omega^T \left( A + \hnu I \right) \Omega \right)^{-1} \right\rVert_2  \Vert \Delta_2 - \Delta_{Ch} \Vert_F \leq \left\lVert \left( \Omega^T \left( A + \hnu I \right) \Omega \right)^{-1} \right\rVert_2 \left( \Vert \Delta_2 \Vert_F + \Vert \Delta_{Ch} \Vert_F \right) 
\end{equation*}
which can be bounded as in \eqref{eq:bound_shifted_inverse_delta2} and thus \eqref{eq:cholesky_ex_condition} is satisfied if \eqref{eq:assumption_shifted_spd} and \eqref{eq:assumption_1st_order_is_OK} hold.

\subsubsection{Choosing the shift and the sketching matrix}\label{sec:choosing_shift_sketching}
Our analysis indicates that the shift and the sketching matrix have to be chosen such that
\begin{itemize}
    \item \eqref{eq:condition_shift_sketch} is satisfied; 
    \item assumption~\ref{assumption_shift_bound} is satisfied, that is, $\hnu \leq c(n,k) u_p \Vert \Omega \Vert_F^2 \Vert A \Vert_F $, where $c(n,k)$ is a constant that depends on $n$ and $k$, so that the shift does not increase the total error bound. 
\end{itemize} 
From \eqref{eq:condition_shift_sketch}, we also require 
\begin{equation*}
    \sigma_{min} (\Omega )^2 > \left( 3 \gamma_n + k \gamma_{k+1} \right)  \Vert \Omega \Vert_F^2.
\end{equation*}
Rearranging the terms gives
\begin{equation*}
    \left( 3 \gamma_n + k \gamma_{k+1} \right)^{-1} > \frac{\Vert \Omega \Vert_F^2 }{\sigma_{min} (\Omega )^2} = \Vert \Omega \Vert_F^2 \Vert \Omega^{\dagger} \Vert_2^2,
\end{equation*}
which can be simplified to $\kappa(\Omega) \ll u^{-1/2}$. We note that sketching matrices are usually chosen to be well-conditioned and thus this condition is satisfied. We denote
\begin{equation}
    \beta \coloneqq \sigma_{min}(\Omega)^2 - \left( 3 \gamma_n + k \gamma_{k+1} \right)  \Vert \Omega \Vert_F^2
\end{equation}
and write \eqref{eq:condition_shift_sketch} as
\begin{equation}\label{eq:condition_shift}
   \left( \gamma_n^{(p)} + 3 \gamma_n + k \gamma_{k+1} + 3 \gamma_n \gamma_n^{(p)} + k \gamma_{k+1} \gamma_n^{(p)}  \right) \Vert A \Vert_F \Vert \Omega \Vert_F^2  < \hnu \beta.
\end{equation}
Note that the computed versions of the shifts defined in \cite{Tropp2017fixed} and \cite{tropp2023randomized} are
\begin{gather*}
\hnu = 2 u_p \Vert \hY \Vert_F + \delta = 2 u_p \Vert A \Omega + \Delta \Vert_F + \delta_1 , \textrm{ where } \\
   \vert \delta_1 \vert \leq 2 n u_p u \Vert \hY \Vert_F  
\end{gather*}
and
\begin{gather*}
    \hnu = 2 u_p (\textrm{trace}(A) + \delta_2) =  2 u_p  \Vert A^{1/2} \Vert_F^2 + 2u_p \delta_2, \textrm{ where } \\
     \vert \delta_2 \vert \leq \gamma_n^{(p)} \textrm{trace}(A).
\end{gather*}
Unfortunately we cannot show that \eqref{eq:condition_shift} strictly holds with these choices of the shift. They are however sufficient in most cases as the bound for $\Vert \Delta_2 \Vert_F$ that produces the terms on the left hand side of \eqref{eq:condition_shift} is a worst-case bound and is usually a large overestimate. If, however, it is important to ensure that \eqref{eq:condition_shift} holds even in the worst case, we provide the following guidance. The expression on the left-hand side of \eqref{eq:condition_shift} can be simplified using $\gamma_n \leq \gamma_n^{(p)}$, $\gamma_{k+1} \leq \gamma_n^{(p)} $, $3 \gamma_n \ll 1$, and $k \gamma_{k+1} \ll 1$ to
\begin{equation*}
   \left( \gamma_n^{(p)} + 3 \gamma_n + k \gamma_{k+1} + 3 \gamma_n \gamma_n^{(p)} + k \gamma_{k+1} \gamma_n^{(p)}  \right) \Vert A \Vert_F \Vert \Omega \Vert_F^2 \leq  (k+6) \gamma_n^{(p)} \Vert A \Vert_F \Vert \Omega \Vert_F^2 
\end{equation*}
and we can thus consider a slightly stricter condition 
\begin{equation}\label{eq:shift_sketch_singval_cond}
   (k+6) \gamma_n^{(p)} \Vert A \Vert_F \Vert \Omega \Vert_F^2  < \hnu \beta.
\end{equation}
This shows that the choice of $\nu$ depends on $\beta$, which is dominated by $\sigma_{min}(\Omega)$. If $\Omega$ is set to be a matrix with $k$ orthogonal columns, then $\Vert \Omega \Vert_F^2 = k$, $\sigma_{min}(\Omega) = 1$ and thus $\beta = 1 - k( 3 \gamma_n + k \gamma_{k+1})$. It is reasonable to assume that $k( 3 \gamma_n + k \gamma_{k+1}) \ll 0.5$. Then \eqref{eq:shift_sketch_singval_cond} is satisfied with the shift set to 
\begin{equation}\label{eq:shift_proposal}
    \nu = 2 k (k+7) \gamma_n^{(p)} \Vert A \Vert_F.
\end{equation}
We note that $\Vert A \Vert_F$ can be computed simultaneously when $A$ is accessed to compute $A \Omega$. Alternatively, $\Vert A \Vert_F$ can be replaced by $n^{1/2} \Vert A \Vert_{\infty}$ in \eqref{eq:shift_proposal} and $\Vert A \Vert_{\infty}$ can be computed at the same time as $A \Omega$ by multiplying $\begin{pmatrix} 1 & 1 & \dots & 1 \end{pmatrix}$ with each column of $\vert A \vert$.

In the case when $\Omega$ does not have orthogonal columns, we may require computing or estimating $\sigma_{min}(\Omega)$. For example, if we take $\Omega$ to be a random matrix with Gaussian entries, then $\sigma_{min}(\Omega)$ is bounded from below by $n^{1/2} - (k-1)^{1/2}$ with high probability \cite{rudelson2009smallest}. Thus for small $k$ we can expect $\sigma_{min}(\Omega) > 1$ and thus set
\begin{equation*}
    \nu = (k+7) \gamma_n^{(p)} \Vert A \Vert_F \Vert \Omega \Vert_F^2.
\end{equation*}
Notice that this is also the case when we are most interested in computing the Nystr\"{o}m approximation. 
If however $k$ is so large that $n^{1/2} - 1 < (k-1)^{1/2}$ holds then we may have $\sigma_{min}(\Omega)<1$ (see Figure~\ref{fig:smallest_singular_value_sketch} for an illustration) and thus $\sigma_{min}(\Omega)$ has to be incorporated into the shift. This is in line with the comment in \cite{Tropp2017fixed} that for large $k$ using orthogonal rather than Gaussian $\Omega$ improves the numerical stability.
\begin{figure}
    \centering
    \includegraphics[width=0.5\linewidth]{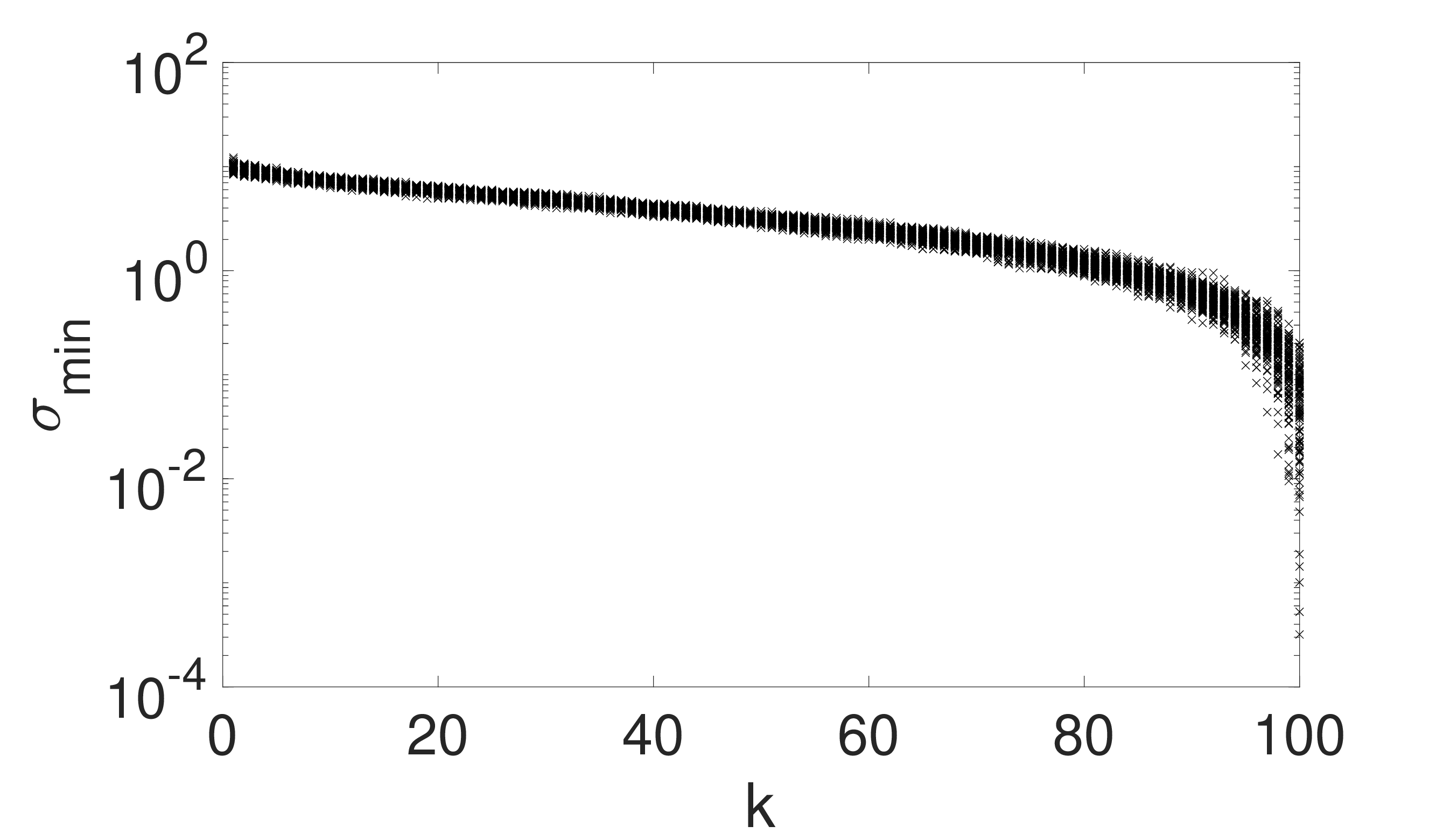}
    \caption{The smallest singular value of 100 realisations of a random $100 \times k$ matrix with standard Gaussian entries.}
    \label{fig:smallest_singular_value_sketch}
\end{figure}

\section{Preconditioning}\label{sec:preconditioning}

We now consider an important application area where low-rank matrix approximations are used, namely, preconditioning iterative solvers. A useful preconditioner has to be inexpensive to construct and apply while accelerating the convergence of an iterative solver. Using the mixed precision Algorithm~\ref{alg:nystrom_reg_id} to construct the preconditioner may thus result in computational savings.

Let $A\in\mathbb{R}^{n\times n}$ be a symmetric positive semidefinite matrix, so that it can be approximated via the  Nystr\"{o}m method, and consider a system of linear equations of the form
\begin{equation}\label{eq:shifted_system}
    (A + \mu I) x = b,
\end{equation}
where $\mu \geq 0$ so that $A + \mu I$ is positive definite, $I \in \mathbb{R}^{n \times n}$ is an identity matrix, and $x,b \in \mathbb{R}^n$. Preconditioned conjugate gradient (PCG) is a popular method for systems with symmetric positive definite coefficient matrices; see, e.g., \cite{Saadbook}.

We focus on the case where $A$ has rapidly decreasing eigenvalues or a cluster of large eigenvalues; notice that the spectrum of $A + \mu I$ has the same structure. In such settings, finite precision PCG convergence can be slow \cite{CarsonStrakos2020} and removing the largest eigenvalues with a preconditioner can accelerate convergence. This can be achieved using a spectral limited memory preconditioner (LMP), defined as 
\begin{gather}\label{eq:prec:LMP_nystrom_shifted_A+muI}
    P = I - U U^T + \frac{1}{\alpha + \mu} U (\Theta +\mu I) U^T, \\
    P^{-1} = I - U U^T + (\alpha + \mu) U (\Theta +\mu I)^{-1} U^T, \label{eq:prec:LMP_nystrom_shifted_A+muI_inverse}
\end{gather}
where the columns of $U \in \mathbb{R}^{n \times k}$ are approximate eigenvectors of $A$ and $U^T U = I$, $\Theta$ is diagonal with approximations to the eigenvalues of $A$, and $\alpha \geq 0$. 

The expression \eqref{eq:prec:LMP_nystrom_shifted_A+muI_inverse} is an instance of a general class of limited memory preconditioners studied in \cite{Tshimanga08,Gratton11,Tshimanga_thesis} and used in data assimilation \cite{Moore11,Nemovar_ecmwf,Laloyaux18}. Algorithm~\ref{alg:nystrom_reg_id} returns an eigendecomposition of $A_N$ and thus can be used to construct \eqref{eq:prec:LMP_nystrom_shifted_A+muI}. The randomised version of the preconditioner is mentioned in \cite{Martinsson20} (Section 17) and is analysed in \cite{Frangella2021}, where it is called a randomised Nystr\"{o}m preconditioner. It is also explored in a data assimilation setting under the name randomised LMP in \cite{Dauzickaite2021forcing}.

It is easy to show that if \eqref{eq:prec:LMP_nystrom_shifted_A+muI_inverse} is constructed using exact eigenpairs, then the eigenvalues used to construct the preconditioner are mapped to $\alpha + \mu$ and the other eigenvalues remain unchanged. This cannot be guaranteed when constructing the preconditioner with approximations to the eigenpairs, but a study of the eigenvalues of the preconditioned matrix when \eqref{eq:prec:LMP_nystrom_shifted_A+muI} is constructed with eigenvalues of $A+tE$, where $\| E \|_2 = 1$, $t \in \mathbb{R}$ is small, and $\mu=0$, in \cite{Giraud06} by Giraud and Gratton show that if the preconditioner is constructed using high quality approximations of not clustered or not small isolated eigenpairs, then the eigenvalues in the inexact case will be close to the exact ones.

If we have information on existing eigenvalue clusters of $A$, we may use it to choose $\alpha$ and thus send the largest eigenvalues close to an already existing cluster. Martinsson and Tropp in \cite{Martinsson20} suggest setting $\alpha = \lambda_k$, where $\lambda_k$ is the smallest nonzero eigenvalue of $A_N$. This is done with the hope that the spectrum of $P^{-1}(A +\mu I)$ is then more clustered compared to $A$, and the condition number is reduced. Note that the condition number alone does not determine the PCG convergence behaviour and the same holds for the number of clusters of eigenvalues; see \cite{CarsonStrakos2020} and \cite{carson202270} for detailed commentary. A small condition number however does indicate fast convergence. In the following section, we consider the resulting condition number when the mixed precision Nystr\"{o}m approximation in Algorithm \ref{alg:nystrom_reg_id} is used to construct the preconditioner.  

\subsection{Bound on the condition number of the preconditioned coefficient matrix}

The work in \cite{Frangella2021} provides bounds for the condition number of the preconditioned coefficient matrix in exact arithmetic. We extend them to include the finite precision error in the preconditioner
\begin{equation}\label{eq:prec:LMP_nystrom_shifted_A+muI_inverse_finite-prec}
    \hPi = I - \hU \hU^T + (\hlambda_k + \mu) \hU (\widehat{\Theta} +\mu I)^{-1} \hU^T,
\end{equation}
where $\hA_N = \hU \widehat{\Theta} \hU^T$ is a finite precision rank-$k$ Nystr\"{o}m approximation of $A$ obtained via Algorithm~\ref{alg:nystrom_reg_id} and $\hlambda_k$ is the smallest eigenvalue of $\hA_N$. The columns of $\hU$ are the eigenvectors of $\hA_N$ and thus they are orthogonal. Then $\hPi$ is symmetric positive definite and denoting the nonzero eigenvalues of $\hA_N$ as $\hlambda_i > \hlambda_{i+1}$ we can write
\begin{align*}
    \hPi = & \, I - \hU \hU^T + (\hlambda_k + \mu) \hU \textrm{diag}\left(
        \frac{1}{\hlambda_1 + \mu }, \frac{1}{\hlambda_2 + \mu }, \dots, \frac{1}{\hlambda_k + \mu }
    \right) \hU^T \\
    = & \, I + \hU \textrm{diag}\left(
        \frac{\hlambda_k + \mu}{\hlambda_1 + \mu } - 1,  \frac{\hlambda_k + \mu}{\hlambda_2 + \mu } -1,\dots, \frac{\hlambda_k + \mu}{\hlambda_k + \mu } -1
    \right) \hU^T. 
\end{align*}
$\hPi$ thus has $n-k+1$ eigenvalues equal to one and the rest are equal to $(\hlambda_k +\mu)/(\hlambda_i +\mu)$ for $i=1,2,\dots,k-1$.

\begin{theorem}\label{prop:prec_condition_no_deterministic}
Let $(A + \mu I) \in \mathbb{R}^{n \times n}$ be symmetric positive definite, $\hPi$ as defined in \eqref{eq:prec:LMP_nystrom_shifted_A+muI_inverse_finite-prec}, and $E$ and $\E$ as defined in \eqref{eq:Nystrom_exact_error} and \eqref{eq:Nystrom_finite_error}, respectively. The condition number $\kappa(\hPisqr (A + \mu I) \hPisqr)$ can then be bounded as
\begin{equation}\label{eq:condNobounds-lower-upper}
       \max \left\{ 1, \frac{\hlambda_k + \mu - \| \E \|_2}{\mu + \lambda_{min} (A) } \right\} \leq \kappa(\hPisqr (A + \mu I) \hPisqr) \leq 1 + \frac{\hlambda_k + \| E \|_2 + 2\| \E \|_2}{\mu - \| \E \|_2 },
\end{equation}
where the upper bound holds if $\mu > \| \E \|_2$. Regardless of this constraint,
\begin{equation}\label{eq:condNobounds-spd}
\kappa(\hPisqr (A + \mu I) \hPisqr) \leq  \left(  \hlambda_k + \mu + \| E \|_2 + \| \E \|_2 \right) \left( \frac{1}{\hlambda_k + \mu} + \frac{\| \E \|_2 +1 }{\lambmin(A) + \mu} \right).  
\end{equation}
\end{theorem}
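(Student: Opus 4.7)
The plan leverages three structural properties of $\hP$ from \eqref{eq:prec:LMP_nystrom_shifted_A+muI_inverse_finite-prec}. Writing $[\hU\ \hU_\perp]$ as an orthogonal matrix and using $\hA_N = \hU\widehat{\Theta}\hU^T$, a direct computation shows that $\hPisqr(\hA_N + \mu I)\hPisqr$ has eigenvalue $\hlambda_k + \mu$ on $\mathrm{range}(\hU)$ (multiplicity $k$) and $\mu$ on $\mathrm{range}(\hU_\perp)$ (multiplicity $n-k$). Moreover $\|\hPi\|_2 = 1$, so $\|\hPisqr X \hPisqr\|_2 \leq \|X\|_2$ for every symmetric $X$, and $\hP \succeq I$, so $v^T\hP v \geq \|v\|_2^2$ for all $v$. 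The fourth key fact, inherited from the orthogonal-projector form $A_N = A^{1/2}P_{A^{1/2}X}A^{1/2}$ recalled at the start of Section~\ref{sec:Nystrom_approx}, is the PSD ordering $0 \preceq A_N \preceq A$, which yields $E \succeq 0$.

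Setting $M = \hPisqr(A+\mu I)\hPisqr$, for the upper bound in \eqref{eq:condNobounds-lower-upper} I would use the decomposition
\begin{equation*}
M = \hPisqr(\hA_N+\mu I)\hPisqr + \hPisqr E \hPisqr + \hPisqr \E \hPisqr
\end{equation*}
together with Weyl's inequality and $\|\hPi\|_2 = 1$ to get $\lambda_{\max}(M) \leq \hlambda_k + \mu + \|E\|_2 + \|\E\|_2$. For $\lambda_{\min}(M)$, the PSD term $\hPisqr E \hPisqr$ may be dropped from the lower Weyl estimate, yielding $\lambda_{\min}(M) \geq \mu - \|\E\|_2$ under the hypothesis $\mu > \|\E\|_2$. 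The trivial bound $\kappa(M) \geq 1$ is immediate; for the sharper alternative, Weyl again gives $\lambda_{\max}(M) \geq \hlambda_k + \mu - \|\E\|_2$ (the $E$-term is discarded via $E \succeq 0$), and $\lambda_{\min}(M)$ is upper-bounded by evaluating the Rayleigh quotient at the smallest eigenvector of $A+\mu I$ and using $\hP \succeq I$ to conclude $\lambda_{\min}(M) \leq \lambmin(A) + \mu$.

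The main obstacle is \eqref{eq:condNobounds-spd}, where the hypothesis $\mu > \|\E\|_2$ must be removed. The crucial step is the algebraic identity
\begin{equation*}
\hP = \frac{\hA_N + \mu I}{\hlambda_k + \mu} + \frac{\hlambda_k}{\hlambda_k + \mu}\,\hU_\perp \hU_\perp^T,
\end{equation*}
which I would verify by computing the action of both sides on $\mathrm{range}(\hU)$ and $\mathrm{range}(\hU_\perp)$ separately. Working with $\|M^{-1}\|_2 = \|(A+\mu I)^{-1/2}\hP(A+\mu I)^{-1/2}\|_2$ (the two symmetric PSD matrices $\hP^{1/2}(A+\mu I)^{-1}\hP^{1/2}$ and $(A+\mu I)^{-1/2}\hP(A+\mu I)^{-1/2}$ share spectral norm) and applying the identity reduces the task to bounding two PSD summands. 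For the first, rewriting $\hA_N + \mu I = (A+\mu I) - E - \E$ gives
\begin{equation*}
(A+\mu I)^{-1/2}(\hA_N+\mu I)(A+\mu I)^{-1/2} = I - (A+\mu I)^{-1/2}(E+\E)(A+\mu I)^{-1/2};
\end{equation*}
the ordering $0 \preceq E \preceq A \preceq A+\mu I$ then implies $0 \preceq (A+\mu I)^{-1/2}E(A+\mu I)^{-1/2} \preceq I$, so the $E$ contribution has spectral norm at most $1$, while the $\E$ contribution is bounded by $\|\E\|_2/(\lambmin(A)+\mu)$. The second summand is bounded by $\hlambda_k/[(\hlambda_k+\mu)(\lambmin(A)+\mu)]$. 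Combining these estimates, simplifying using $\hlambda_k/(\hlambda_k+\mu) \leq 1$, and multiplying by the $\lambda_{\max}(M)$ bound from the previous paragraph yields \eqref{eq:condNobounds-spd}.
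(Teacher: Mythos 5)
Your proposal is correct and follows essentially the same route as the paper: the same Weyl-inequality decomposition $A=\hA_N+E+\E$ with $\Vert \hPi\Vert_2=1$ and $E\succeq 0$ for \eqref{eq:condNobounds-lower-upper}, and the same passage to $\lambda_{max}\left((\Amu)^{-1/2}\hP(\Amu)^{-1/2}\right)$ for \eqref{eq:condNobounds-spd}, where your identity for $\hP$ is just an algebraic regrouping of the paper's splitting $\hP=\tfrac{1}{\hlambda_k+\mu}(\hA_N+\mu\hU\hU^T)+(I-\hU\hU^T)$ and your use of $\hA_N+\mu I=(\Amu)-E-\E$ plays the role of the paper's insertion of $\E-\E$ and the observation that $\hA_N+\E=A_N$. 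The only caveat, which your write-up shares with the paper's own final simplification, is that absorbing the cross term $\Vert\E\Vert_2/\bigl((\hlambda_k+\mu)(\lambmin(A)+\mu)\bigr)$ into $\Vert\E\Vert_2/(\lambmin(A)+\mu)$ tacitly uses $\hlambda_k+\mu\geq 1$, so this is not a gap relative to the paper.
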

The proof closely follows the argument in \cite{Frangella2021} and is supplied in Appendix~\ref{append_condition_number_finite_precision}.

If $\| \E \|_2 = 0$, then the bounds in Theorem~\ref{prop:prec_condition_no_deterministic} coincide with the bounds in \cite[Proposition 5.3]{Frangella2021}. The lower bound is useful when $\hlambda_k + \mu - \| \E \|_2 > \mu + \lambda_{min} (A)$, that is, when  $\hlambda_k >  \| \E \|_2 + \lambda_{min} (A)$, which can be expected to hold when large eigenvalues are approximated. The finite precision error has an additive effect on the bounds and expands them, and a multiplicative effect appears in \eqref{eq:condNobounds-spd}.

\section{Numerical examples}\label{sec:numerics}

We illustrate the theory developed in the previous sections with simple numerical experiments in MATLAB R2021a\footnote{Our code can be found at \url{https://github.com/dauzickaite/mpNystrom}}. The Nystr\"{o}m approximation is constructed setting $u_p$ to double, single, and half precision. Half precision is simulated using the \textit{chop} function \cite{HighamChop}. The working precision $u$ is set to double. An `exact' Nystr\"{o}m approximation is computed using the Advanpix Multiprecision Computing Toolbox \cite{advanpix} using 64 decimal digits precision for all computations in Algorithm~\ref{alg:nystrom_reg_id}. The same extended precision is used to compute the total approximation error and $\E$, condition numbers, and all the bounds. Each experiment is performed with ten initializations of the sketching matrix $\Omega$ and we report the means. There is no oversampling. 

The sketching matrix $\Omega$ is obtained by generating an $n \times k$ matrix $G$ with Gaussian entries, computing its economical size QR decomposition $G=QR$ in double precision and setting $\Omega = Q$. We note that just setting $\Omega = G$ sometimes results in overflow when $u_p$ is set to half. 
The shift $\nu$ is set to be $2 u_p \Vert Y \Vert_F$ as proposed in \cite{Tropp2017fixed}.

We explore the approximation problem without preconditioning and consider synthetic and application problems in Section~\ref{section:lowrank_approx}. The approximations of $A$ are used to construct the LMP and the preconditioned shifted systems are solved via PCG in Section~\ref{section:precond_numerics}. Experiments for a kernel ridge regression problem are presented in Section~\ref{section:krr}.

\subsection{Low-rank approximation}\label{section:lowrank_approx}
We compute the Frobenius norms of the finite precision error $\E$ and the total approximation error $A - \hA_N$. 
We notice that the terms $\kappa(A_k +\hnu I) \widetilde{\kappa}(\Omega)^2$ do not contribute meaningful information to the bounds as they come from a loose bound \eqref{eq:weighted_pseudinv_bound}. The finite precision error is hence compared to
\begin{equation}\label{eq:fin_prec_error}
  k^{1/2} \gamma_n^{(p)} \|A\|_F.
\end{equation}

\subsubsection{Synthetic problems}
We perform experiments with synthetic matrices described in \cite{Tropp2017fixed}. $A \in \mathbb{R}^{n \times n}$ is a real matrix with effective rank $r$ and is constructed in the following ways.  
\begin{itemize}
    \item \emph{Exponential decay}: 
    \begin{equation*}
        A = diag(\beta_1,\beta_2,\dots,\beta_r,10^{-q},10^{-2q},\dots,10^{-(n-r)q}),
    \end{equation*} 
    where $q$ is set to values $0.1$, $0.25$, and $1$.
    \item \emph{Polynomial decay}:
    \begin{equation*}
        A = diag(\beta_1,\beta_2,\dots,\beta_r,2^{-p},3^{-p},\dots,(n-r+1)^{-p}),
    \end{equation*}
     where $p$ is set to $ 0.5$, $1,$ and $2$.
    \item \emph{PSD noise}: 
    \begin{equation*}
        A = diag(\beta_1,\beta_2,\dots,\beta_r,0, \dots, 0) + \xi n^{-1}(G G^T), 
    \end{equation*}
    where $G \in \mathbb{R}^{n \times n}$ is a random Gaussian matrix, and $\xi$ is set to $10^{-4}$, $10^{-2}$, and $10^{-1}$ with higher values corresponding to greater noise.
\end{itemize}

We set $n=10^2$, $r=10$, $\beta_1=\beta_2=\dots = \beta_r \eqqcolon \beta$. The values of $\beta$ span $1, 10, 10^2, \dots, 10^{16}$. Note that $\| A \|_F \approx r^{1/2} \beta$. 
Experiments with half precision are performed when $\beta < 10^5$. We compute rank $k \in \{1,2,\dots,10\}$ approximations.

We report the computed total and finite precision error in Figure~\ref{fig:synthetic_error_pol_decay} for the polynomial decay problem with $p=1$; the results are similar for all the synthetic problems. 
When $k<10$, $E$ is dominated by $\beta_{k+1}$, $\beta_{k+2}$, $\dots$, $\beta_{r}$ and the finite precision error $\E$ stays significantly smaller. When $k=10$, all the large eigenvalues are being approximated and the exact approximation error $E$ depends on the small eigenvalues. The finite precision error thus becomes important and affects the approximation quality detrimentally for large $\|A\|_F$.

\begin{figure}
\begin{center}
\begin{subfigure}[b]{0.49\linewidth}
  \centering
 \includegraphics[width=\linewidth]{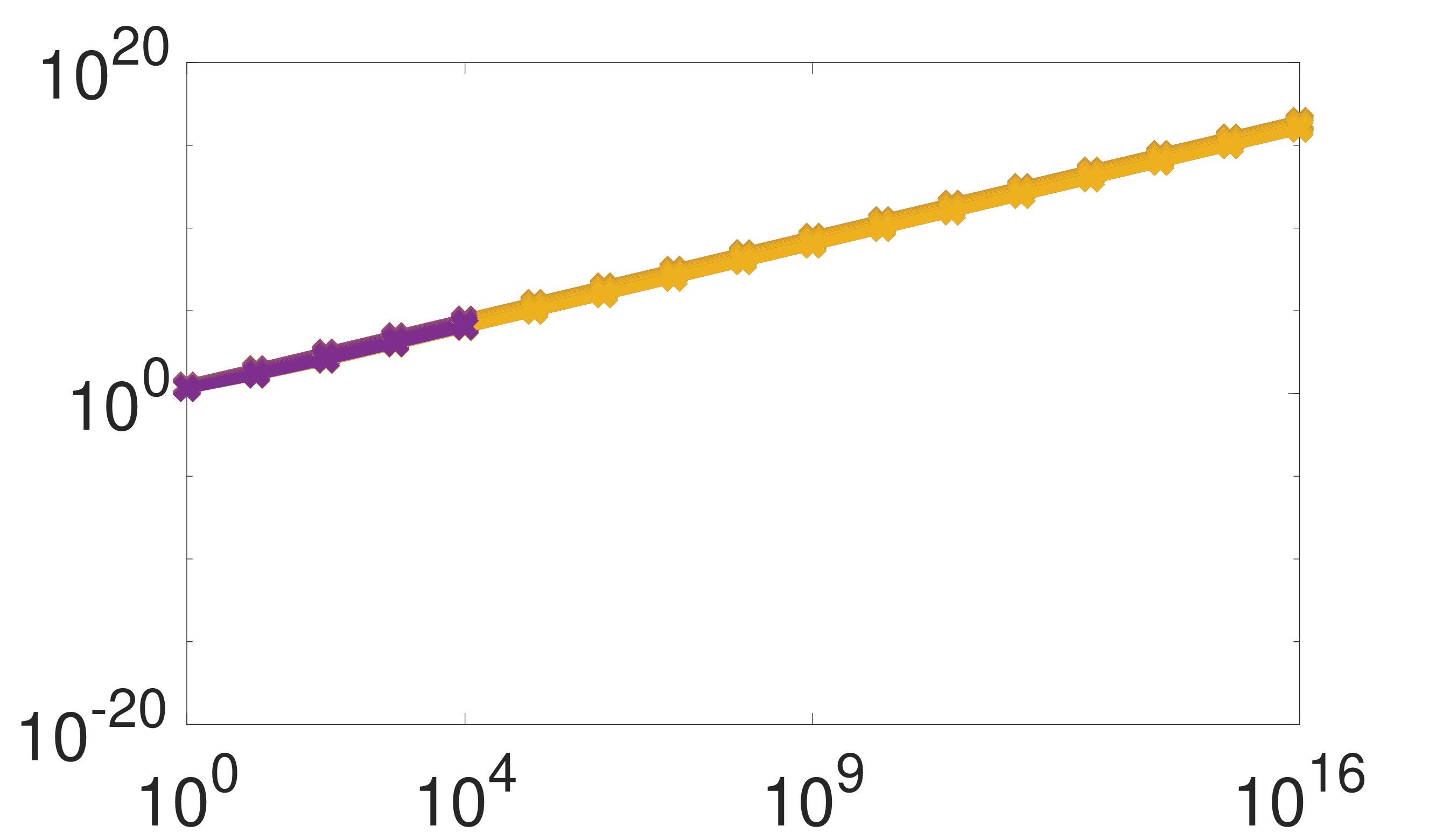}
  \caption{$\| A - \hA_N\|_F$, $k \in \{1,2,\dots,9\}$}
\end{subfigure}
\begin{subfigure}[b]{0.49\linewidth}
  \centering
 \includegraphics[width=\linewidth]{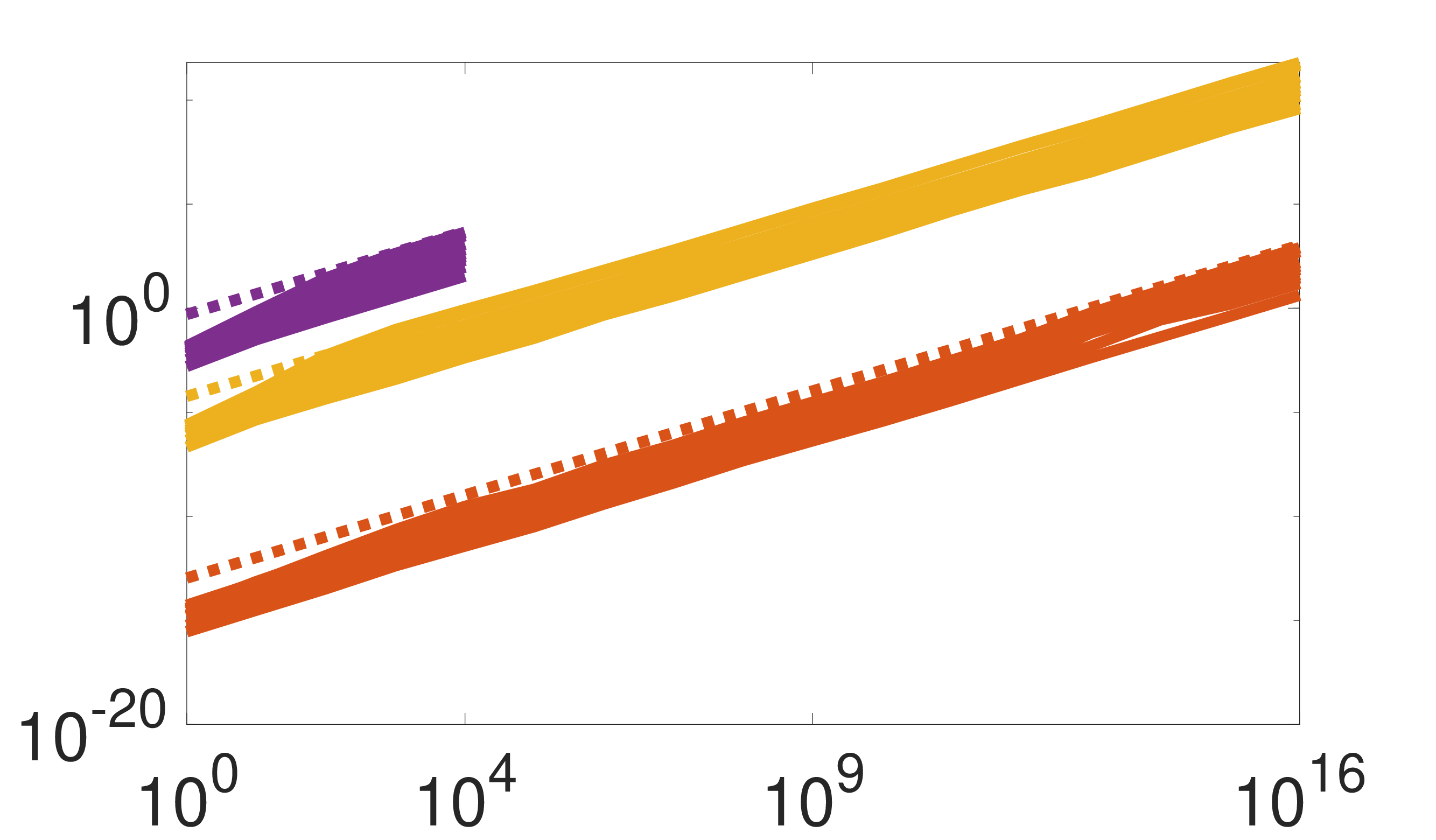}
  \caption{$\| A_N - \hA_N\|_F$, $k \in \{1,2,\dots,9\}$}
\end{subfigure}
\begin{subfigure}[b]{0.49\linewidth}
  \centering
 \includegraphics[width=\linewidth]{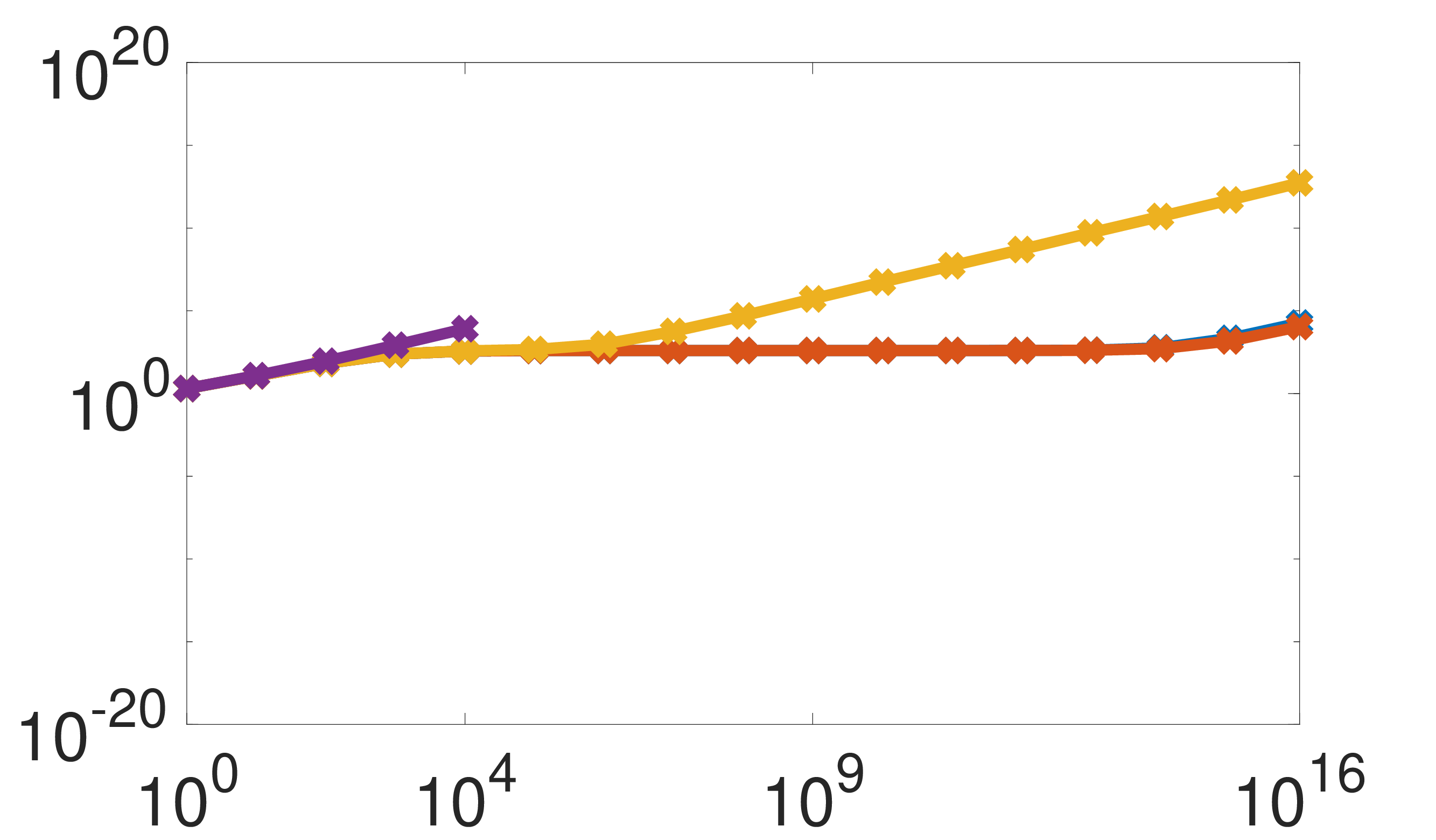}
  \caption{$\| A - \hA_N\|_F$, $k =10$}
\end{subfigure}
\begin{subfigure}[b]{0.49\linewidth}
  \centering
 \includegraphics[width=\linewidth]{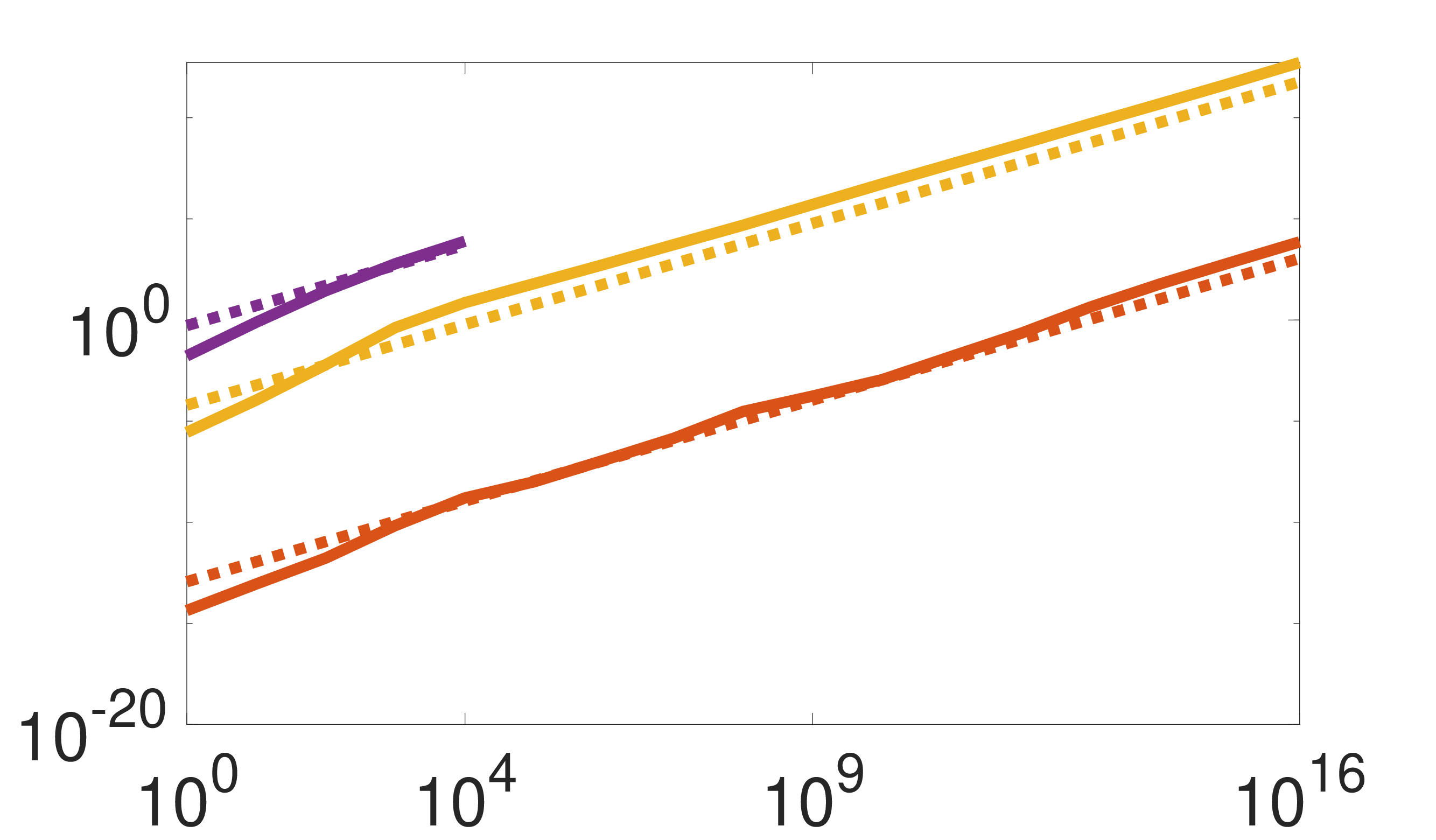}
 
  \caption{$\| A_N - \hA_N\|_F$, $k =10$}
\end{subfigure}
\end{center}
    \caption{Polynomial decay problem with $p=1$. The left panels show the Frobenius norms of the mean total error $A - \hA_N$ (crossed), and the right panels show the Frobenius norms of the mean finite precision error $A_N - \hA_N$ (solid) for every $k$ value versus $\beta$ and the finite precision error estimated by \eqref{eq:fin_prec_error} (dotted) with $k=9$ (top panel) and $k=10$ (bottom panel). For all plots, the colors correspond to $u_p$ set to half (purple), single (yellow), double (red), and `exact' (blue). When $k<10$ the total error is indistinguishable for all precisions.}
    \label{fig:synthetic_error_pol_decay}
\end{figure}

\subsubsection{SuiteSparse problems}
We now consider three symmetric positive definite problems from the SuiteSparse matrix collection \cite{SuiteSparse}. Their properties are summarised in Table~\ref{tbl:sparse_suit_problems_description}. The problems have different spectral properties including the decay of the largest eigenvalues and spectral gaps shown in Figure~\ref{fig:spectra}, and are of size $\mathcal{O}(10^2) - \mathcal{O}(10^3)$. The right hand-sides of the heuristics \eqref{eq:frob_heuristic_full} and \eqref{eq:frob_heuristic_no-eig-sum}, and the quantity $u_p$, which are used to estimate when the finite precision error becomes significant, are shown in Figure~\ref{fig:heuristics} for $u_p$ set to single and half precision. We note that for small $k$ value both \eqref{eq:frob_heuristic_full} and \eqref{eq:frob_heuristic_no-eig-sum} give similar estimates.

We report the total and finite precision errors in Figure~\ref{fig:suitsparse_error} for various $k$ values. The $k$ values are chosen so that we approximate eigenvalues throughout different parts of the spectrum; if there is a relatively large gap between eigenvalues $\lambda_j$ and $\lambda_{j+1}$, then we test $k=j$ and $k=j+1$. Note that this results in the nonuniform spacing of the $x$-axes in Figure~\ref{fig:suitsparse_error}.

The finite precision error $\| \E\|_F$ is approximated by \eqref{eq:fin_prec_error}. 
The error due to low precision affects the quality of the approximation when $\| E \|_F \approx \| \E \|_F$, which is the case for relatively large $k$ values. The heuristic \eqref{eq:frob_heuristic_full} is too pessimistic, whereas \eqref{eq:frob_heuristic_no-eig-sum} gives a good estimate of when this happens (Figure~\ref{fig:heuristics}). The finite precision error increases when eigenvalues close to a spectral gap are approximated; this can be attributed to the quantity $\| A \Omega \left(\Omega^T A \Omega \right)^{\dagger} \|_2$ (see Section~\ref{sec:prel_lemma}).

\begin{table}
\begin{center}
\begin{tabular}{ l| c| c|c }
Problem & $n$ & $\| A \|_2$ & half precision \\
\hline
bcsstm07 & 420 & $2.51 \times 10^3$ & yes\\
1138\_bus & 1138 & $3.01 \times 10^4$ & yes \\
nos7 & 729 & $9.86 \times 10^6$ & no \\
\end{tabular}
\caption{Problems from the SuiteSparse collection \cite{SuiteSparse}, where $A$ is an $n \times n$ positive definite matrix. Half precision is used for problems where the largest eigenvalues belong to the range of half precision, see Table~\ref{tab:ieee_param}.}
\label{tbl:sparse_suit_problems_description}
\end{center}
\end{table}

\begin{figure}
\begin{center}
\begin{subfigure}[b]{0.3\linewidth}
  \centering
 \includegraphics[width=\linewidth]{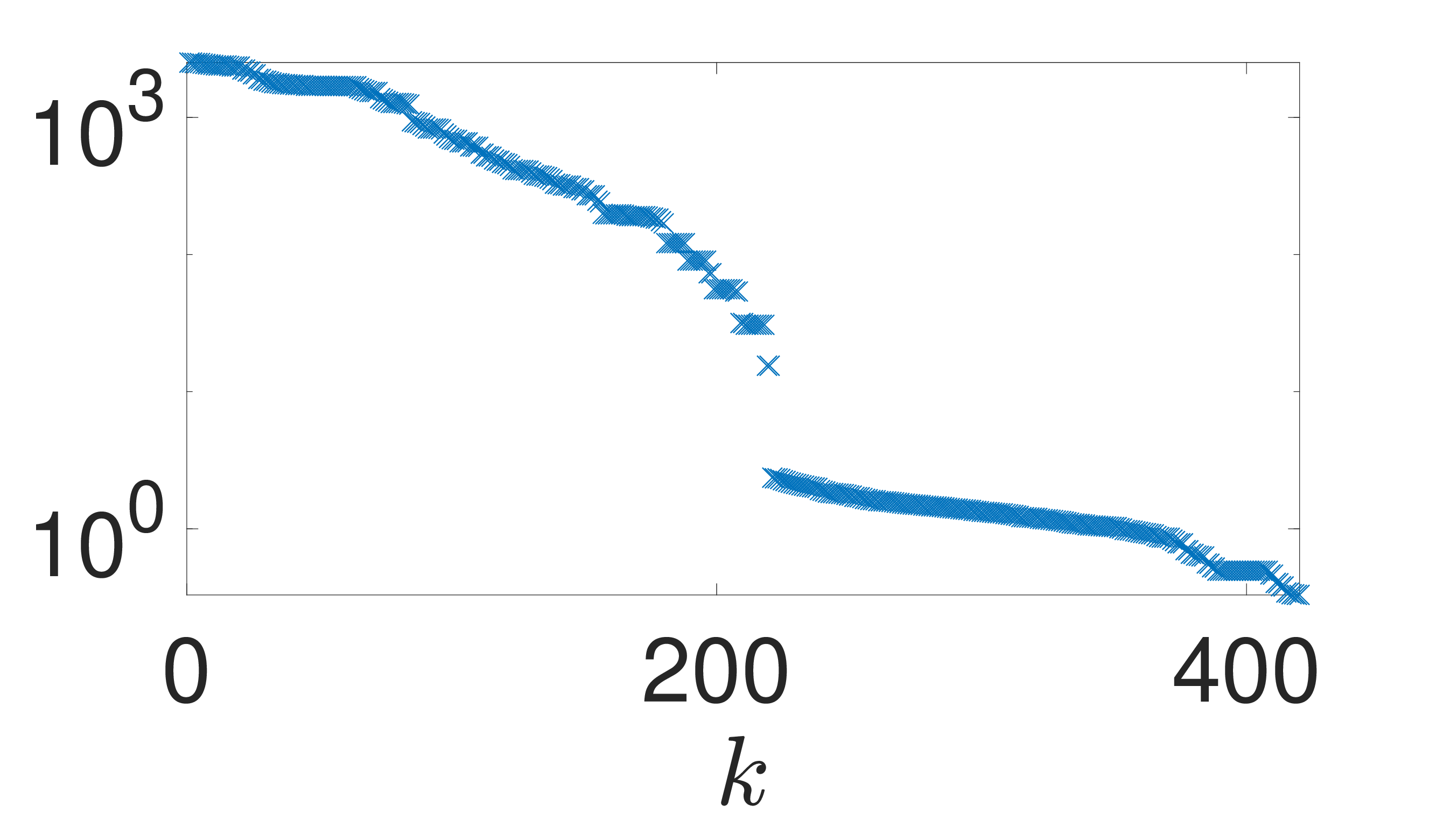}
  \caption{bcsstm07}
\end{subfigure}
\begin{subfigure}[b]{0.3\linewidth}
  \centering
 \includegraphics[width=\linewidth]{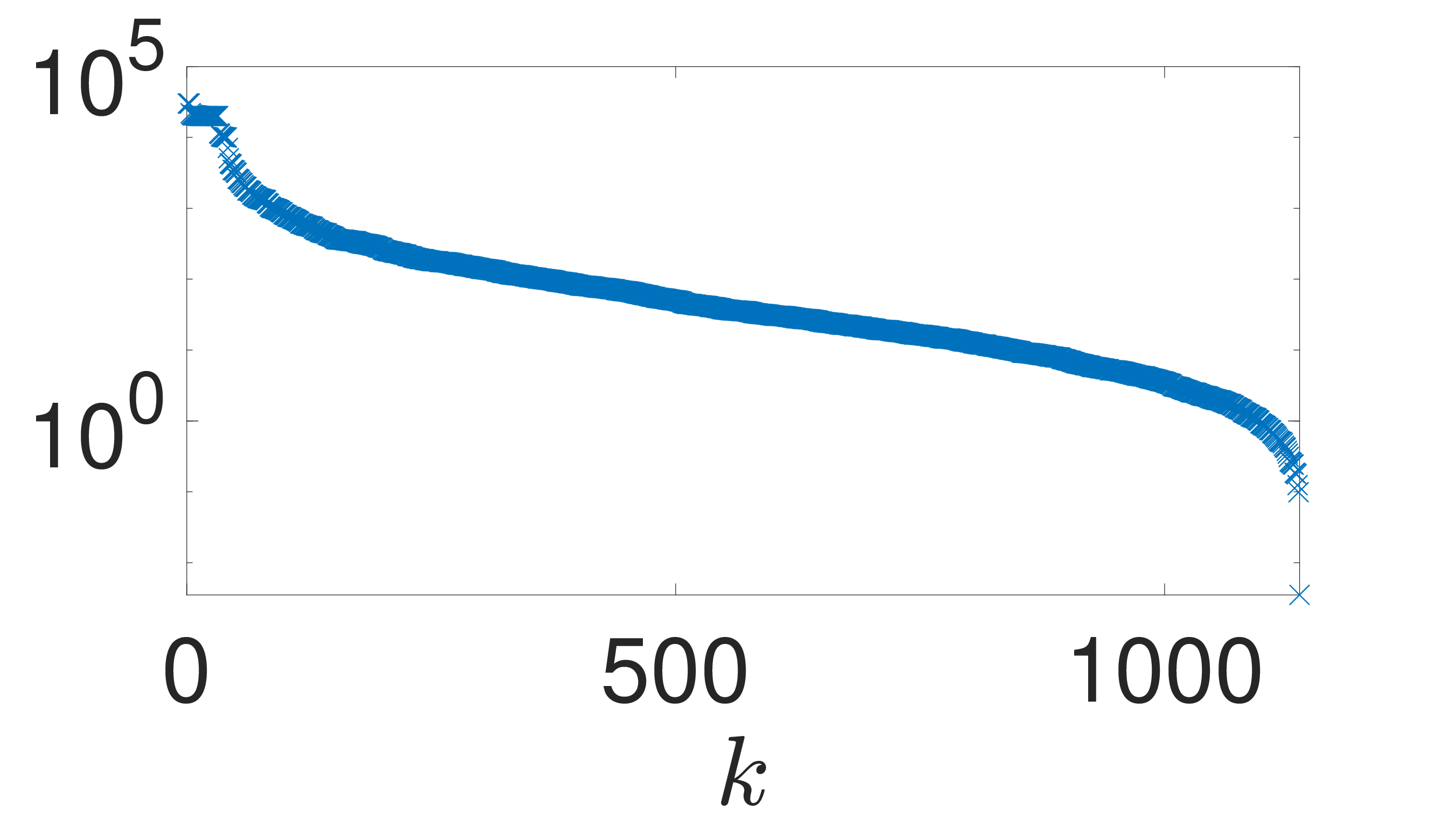}
  \caption{1138\_bus}
\end{subfigure}
\begin{subfigure}[b]{0.3\linewidth}
  \centering
 \includegraphics[width=\linewidth]{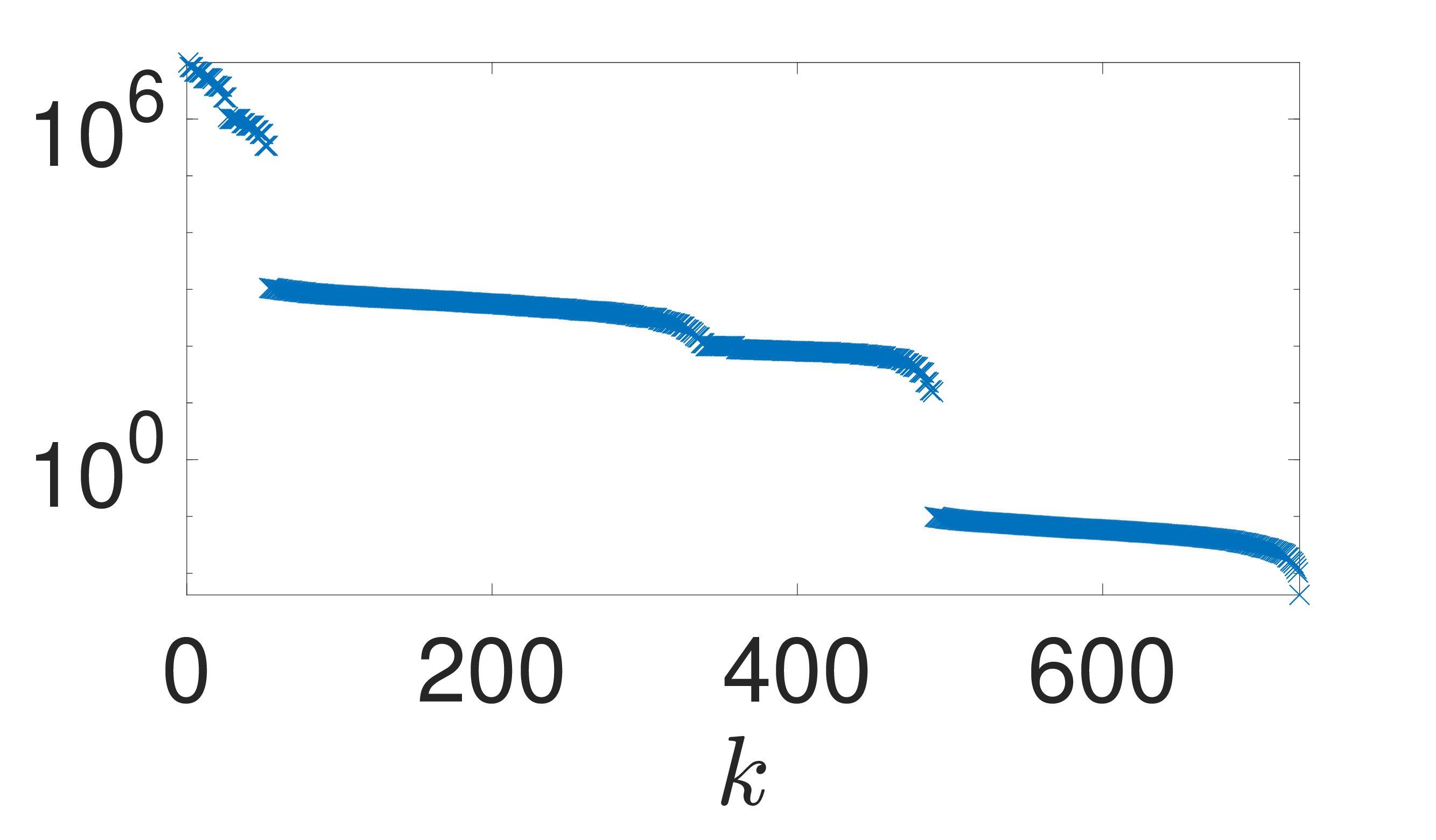}
  \caption{nos7}
\end{subfigure}
 \end{center}
    \caption{Spectra of the problems in Table~\ref{tbl:sparse_suit_problems_description}.}
    \label{fig:spectra}
\end{figure}

\begin{figure}
\begin{center}
\begin{subfigure}[b]{0.3\linewidth}
  \centering
 \includegraphics[width=\linewidth]{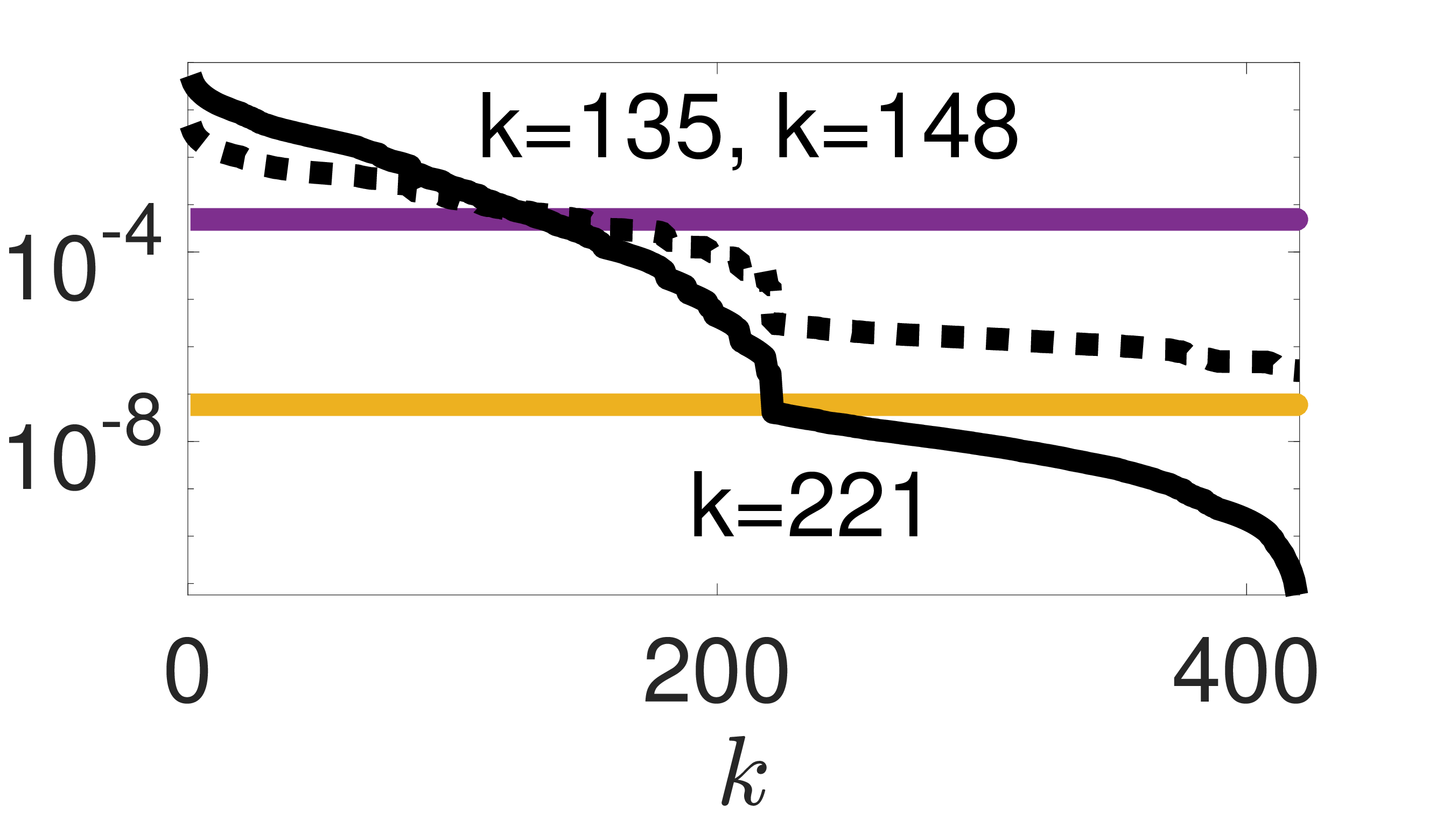}
  \caption{bcsstm07}
\end{subfigure}
\begin{subfigure}[b]{0.3\linewidth}
  \centering
 \includegraphics[width=\linewidth]{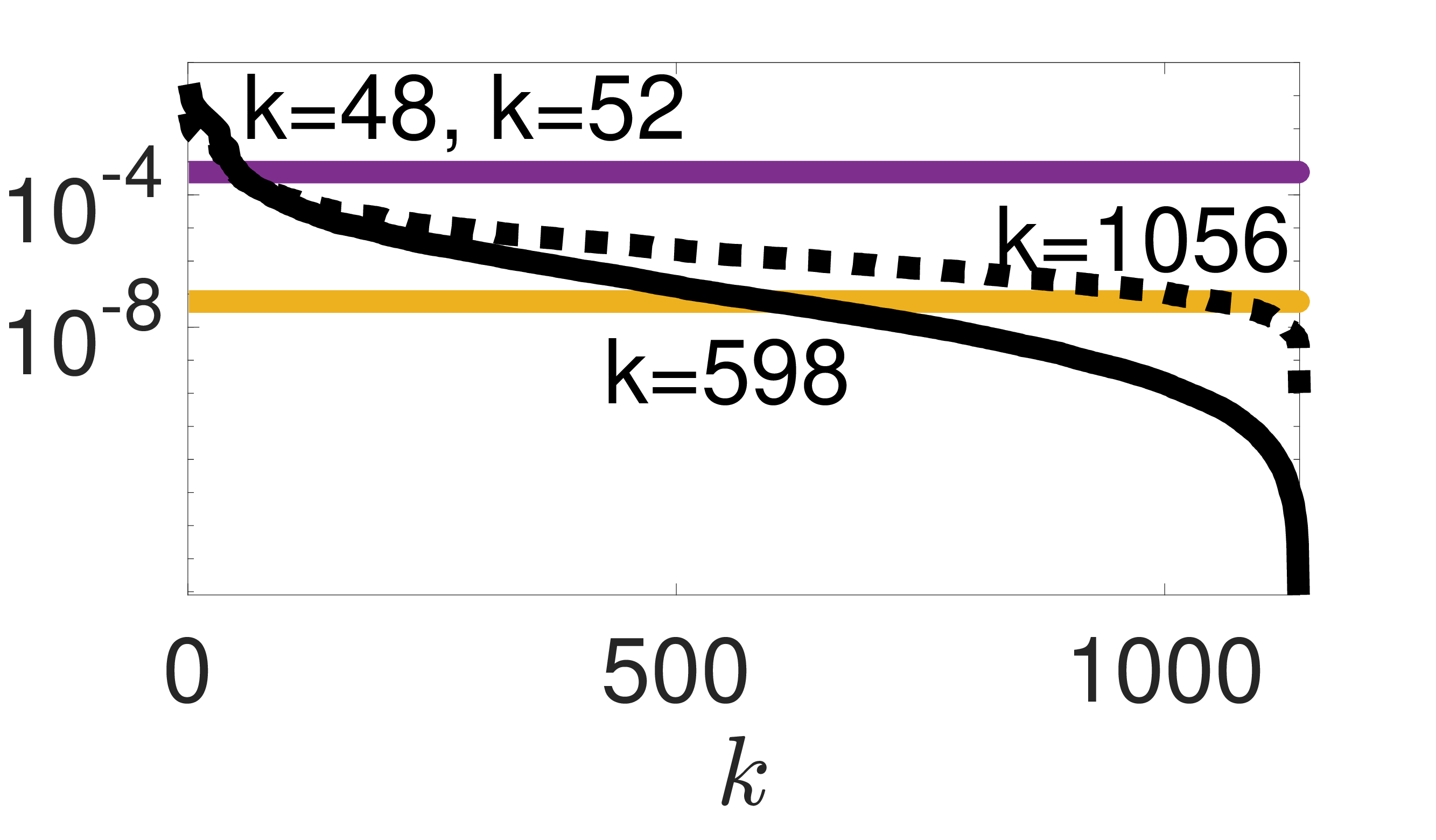}
  \caption{1138\_bus}
\end{subfigure}
\begin{subfigure}[b]{0.3\linewidth}
  \centering
 \includegraphics[width=\linewidth]{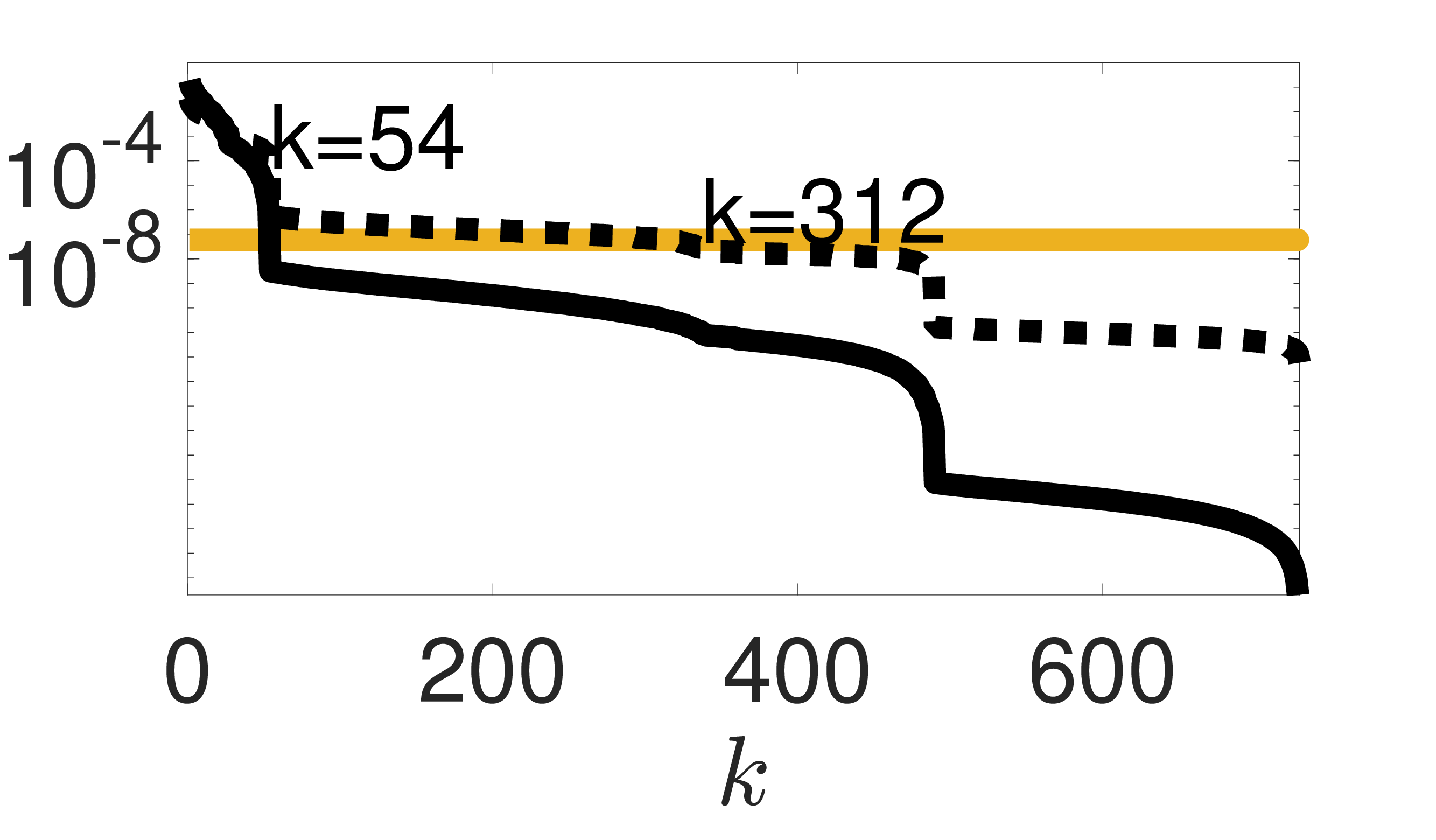}
  \caption{nos7}
\end{subfigure}
 \end{center}
    \caption{The right hand side of heuristic \eqref{eq:frob_heuristic_full} (black solid) and \eqref{eq:frob_heuristic_no-eig-sum} (black dotted) versus $k$, and $u_p$ for the problems in Table~\ref{tbl:sparse_suit_problems_description} with text indicating the point of intersection. For the  lines showing $u_p$, colours indicating single (yellow) and half (purple) precision are the same as in Figure~\ref{fig:synthetic_error_pol_decay}.}
    \label{fig:heuristics}
\end{figure}

\begin{figure}
\begin{center}
\begin{subfigure}[b]{0.49\linewidth}
  \centering
 \includegraphics[width=\linewidth]{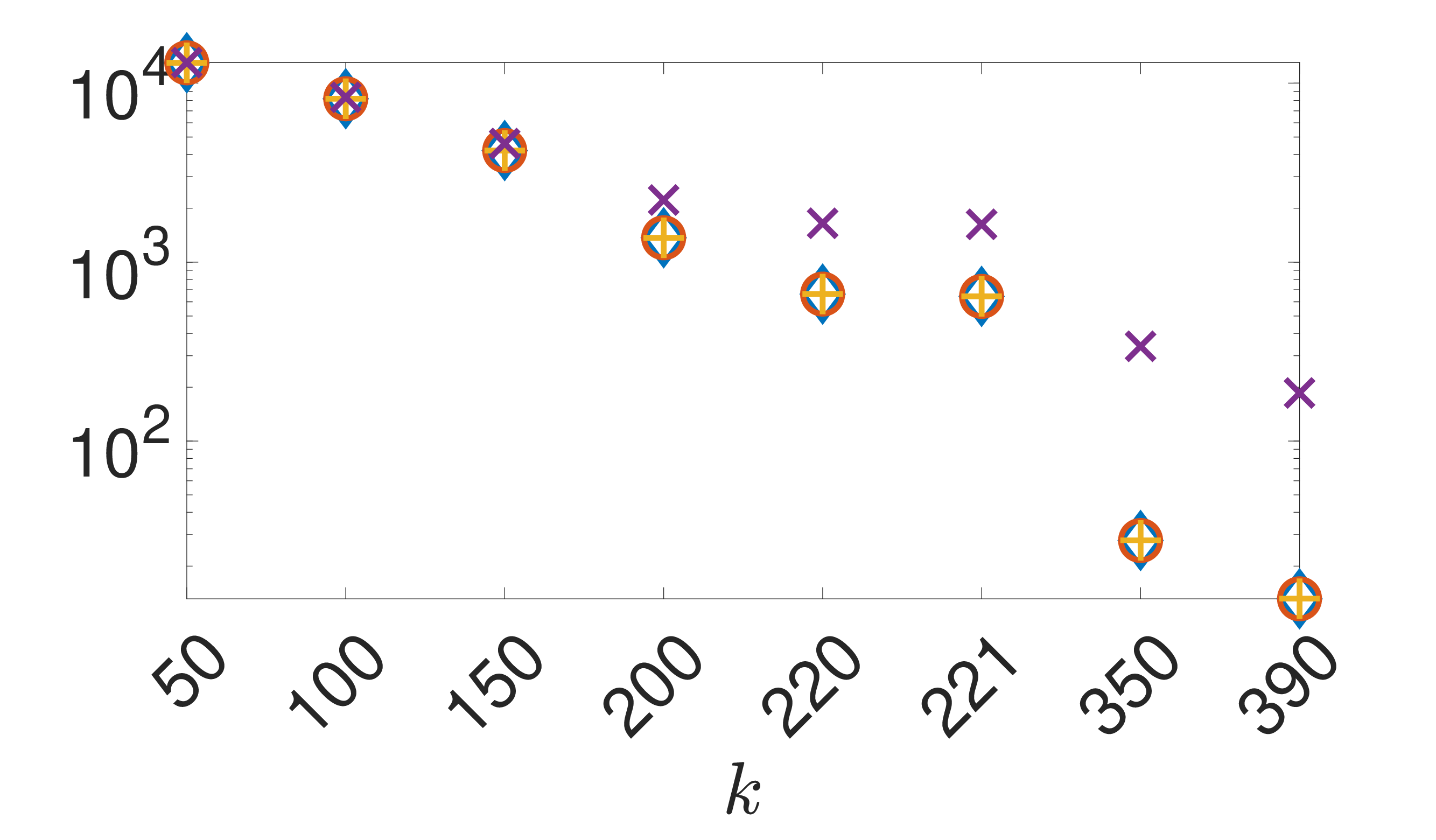}
  \caption{bcsstm07}
\end{subfigure}
\begin{subfigure}[b]{0.49\linewidth}
  \centering
 \includegraphics[width=\linewidth]{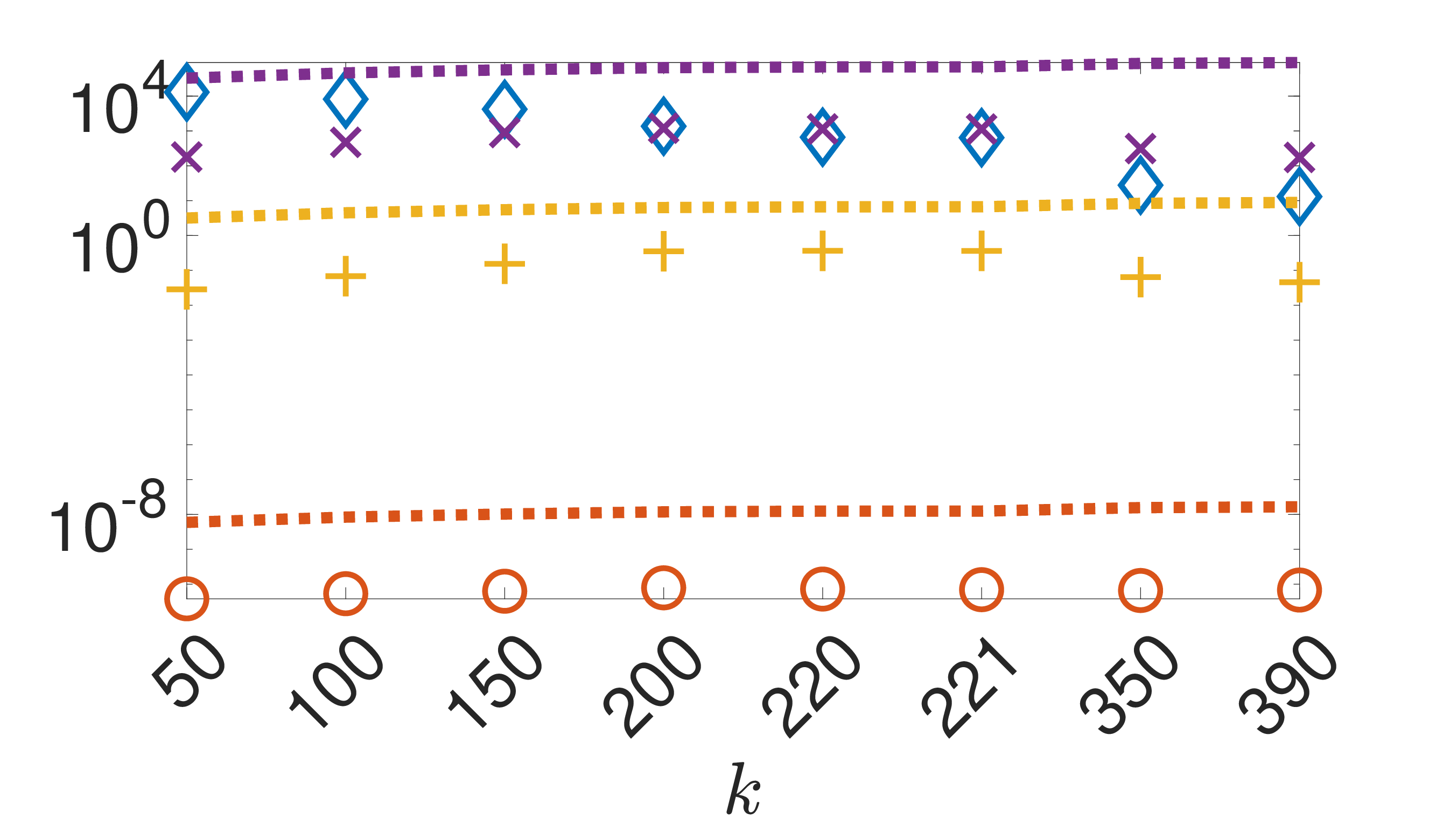}
  \caption{bcsstm07}
\end{subfigure}
\vspace{-2mm}
\begin{subfigure}[b]{0.49\linewidth}
  \centering
 \includegraphics[width=\linewidth]{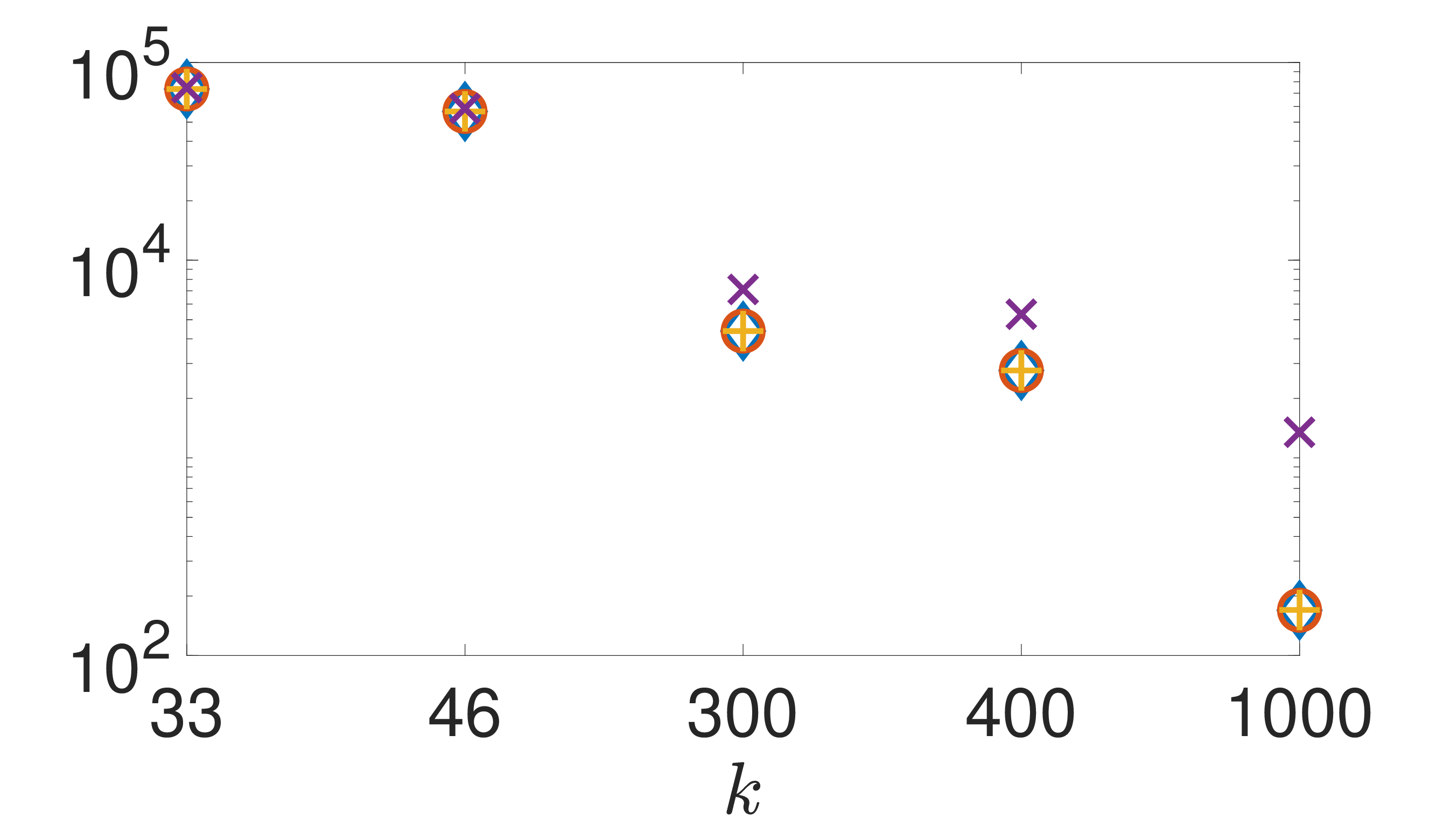}
  \caption{1138\_bus}
\end{subfigure}
\begin{subfigure}[b]{0.49\linewidth}
  \centering
 \includegraphics[width=\linewidth]{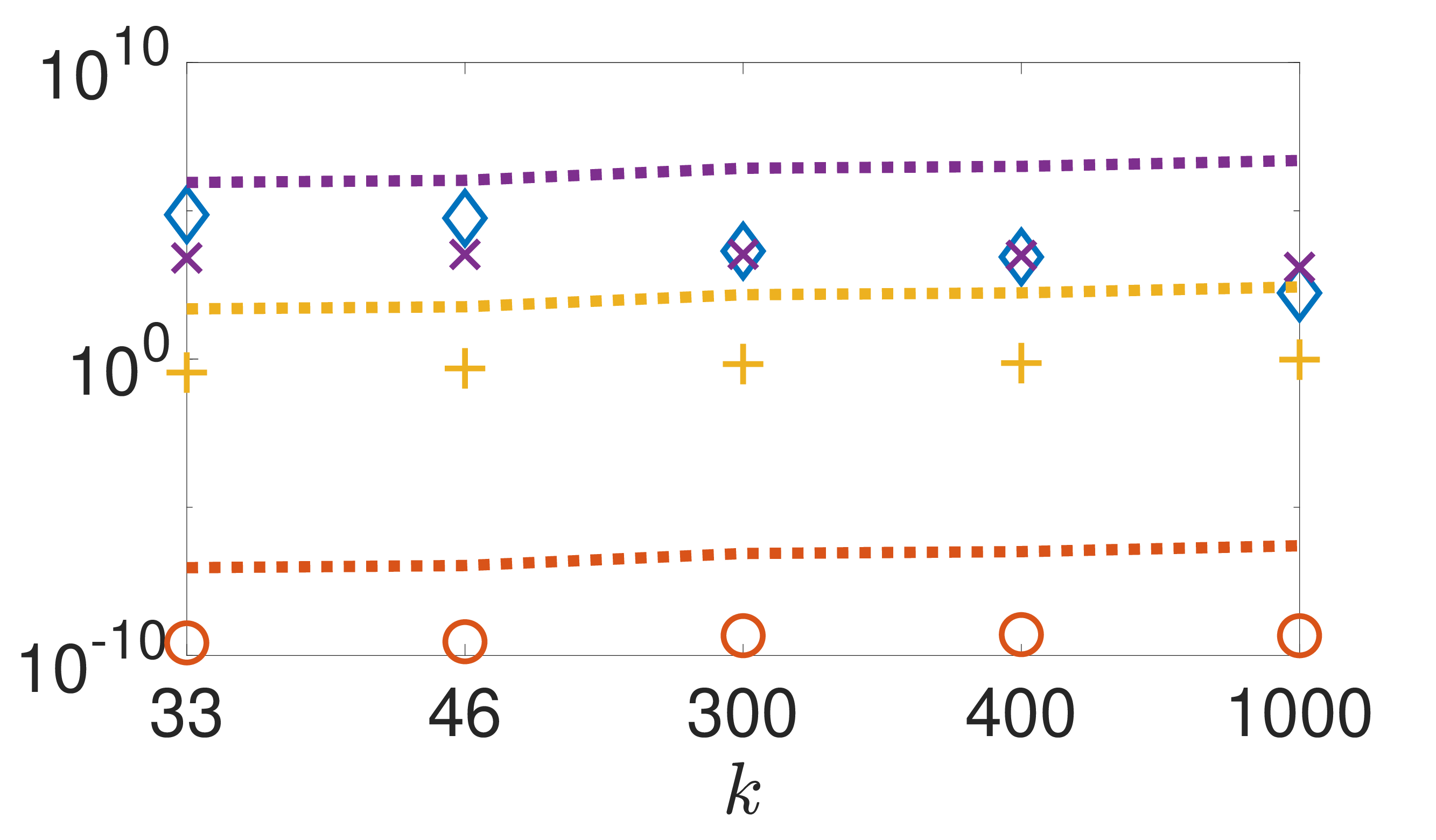}
  \caption{1138\_bus}
\end{subfigure}
\begin{subfigure}[b]{0.49\linewidth}
  \centering
 \includegraphics[width=\linewidth]{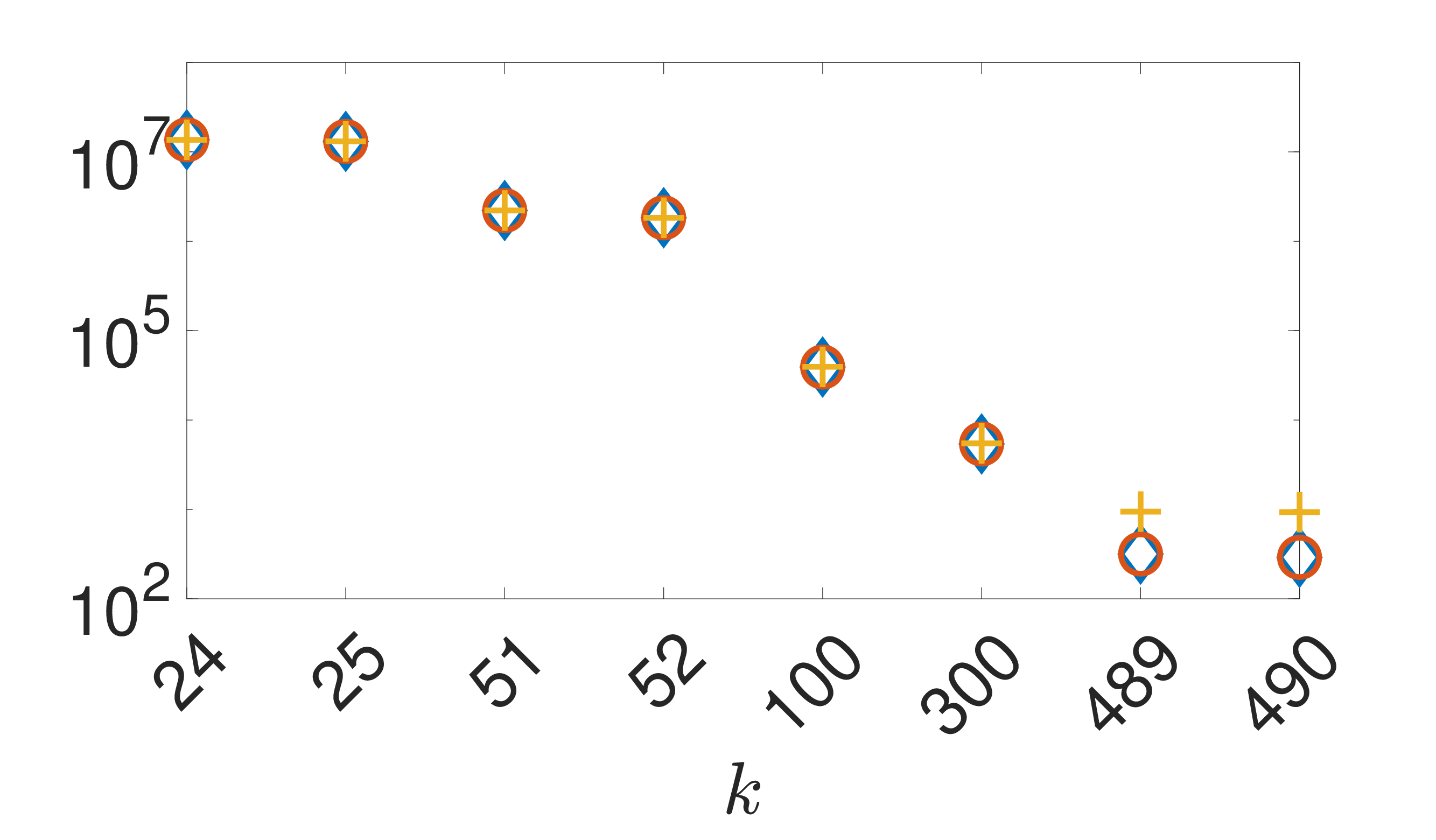} 
  \caption{nos7}
\end{subfigure}
\begin{subfigure}[b]{0.49\linewidth}
  \centering
 \includegraphics[width=\linewidth]{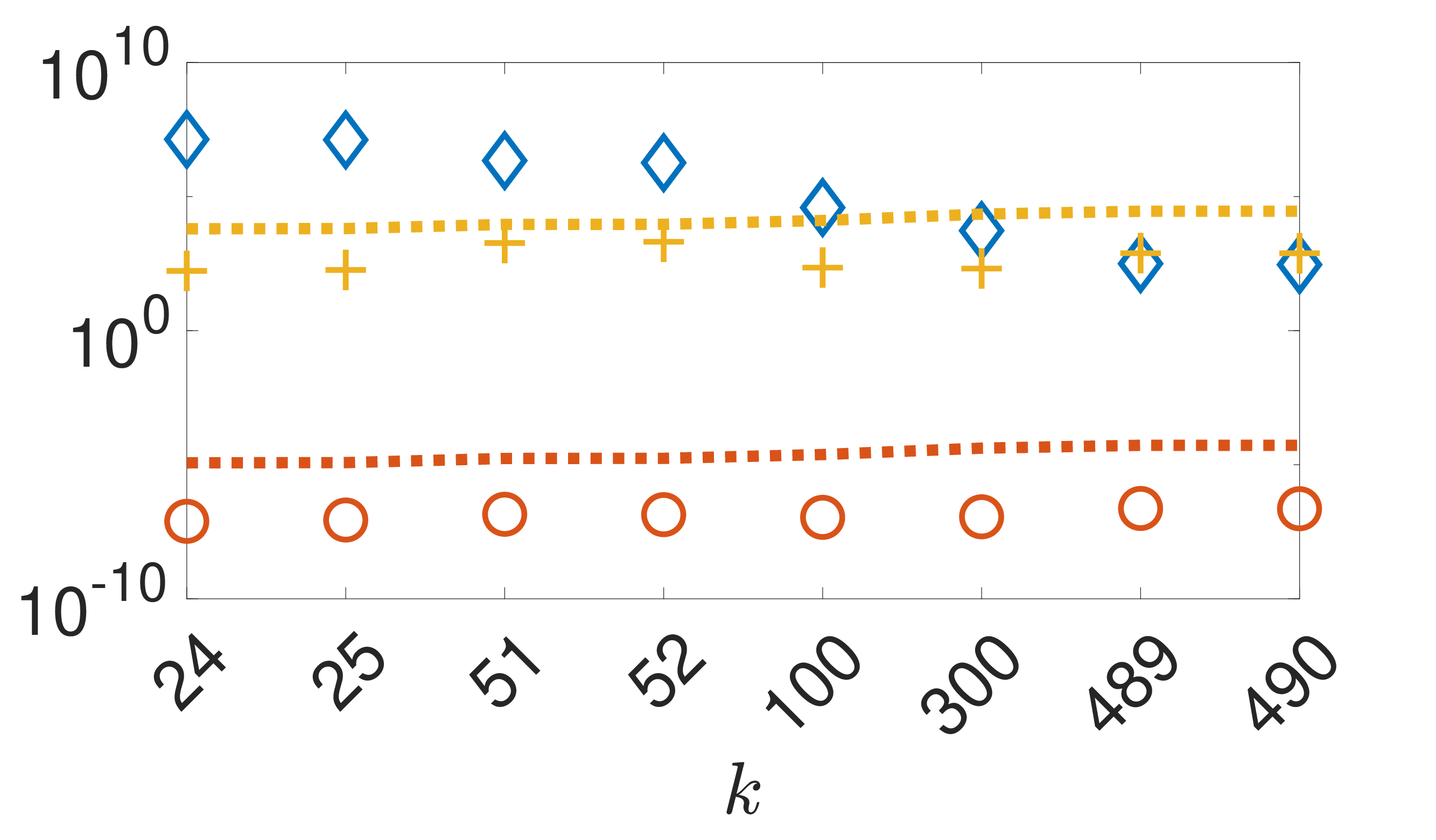}
  \caption{nos7}
\end{subfigure}
\end{center}
    \caption{SuiteSparse problems. The left panels show Frobenius norms of the mean total error $\|A - \hA_N\|_F$ (markers). The right panels show the mean finite precision error $\|A_N - \hA_N\|_F$ (markers), the mean exact approximation error $\|A-A_N\|_F$ (blue diamonds), and estimates of the finite precision error (dashed lines) versus the rank of approximation $k$.
    In all panels, the precision $u_p$ is indicated by the colour and the type of the markers: purple crosses denote half, yellow pluses denote single, red circles denote double, and blue diamonds denote `exact'.
    }
    \label{fig:suitsparse_error}
\end{figure}

\subsection{Preconditioned systems}\label{section:precond_numerics}
We are interested in comparing the preconditioning performance for shifted systems \eqref{eq:shifted_system} when the preconditioner \eqref{eq:prec:LMP_nystrom_shifted_A+muI_inverse_finite-prec} is constructed using an approximation computed via Algorithm~\ref{alg:nystrom_reg_id} with different precisions $u_p$. 
This is done by computing the condition number of preconditioned systems and solving them via PCG in double precision. We consider problems from the SuiteSparse collection described in the previous section. 
We consider the same $k$ values as in the previous section and refer the reader to \cite{Frangella2021} for strategies on choosing an optimal $k$. Note that the computational cost of generating the preconditioner depends on the precision $u_p$, but the cost of applying the preconditioner does not.

\subsubsection{Condition number}
We compute the condition number of split-\linebreak[4] preconditioned matrices 
\begin{equation*}
    \hPisqr (A + \mu I) \hPisqr,
\end{equation*}
where $\hPisqr$ is constructed as defined in \eqref{eq:prec:LMP_nystrom_shifted_A+muI_inverse_finite-prec}. 

To simplify notation, we denote the quantities in the condition number bounds in Theorem~\ref{prop:prec_condition_no_deterministic} as follows:
\begin{align*}
    b_\text{low} &=  \max \left\{ 1, \frac{\hlambda_k + \mu - \| \E \|_2}{\mu + \lambda_{min} (A) } \right\}, \\
    b_\text{upp} &=  1 + \frac{\hlambda_k + \| E \|_2 + 2\| \E \|_2}{\mu - \| \E \|_2 }, \\
    b_\text{uppspd} &=  \left(  \hlambda_k + \mu + \| E \|_2 + \| \E \|_2 \right) \left( \frac{1}{\hlambda_k + \mu} + \frac{\| \E \|_2 +1 }{\lambmin(A) + \mu} \right).
\end{align*}
We compute approximations of these quantities by replacing $\| E \|_2$ with the expected error $\mathbb{E}\, \| E \|_2$ in \eqref{eq:bound_expected_error_FTU} and $\| \E \|_2$ with the approximation \eqref{eq:fin_prec_error} for $\| \E \|_F$. We set $\mu$ to $0.1$, $0.5$ and $1$. Thus $b_\text{upp}$ is only computed when $u_p$ is set to double precision for all problems.

We report results for $\mu=0.5$ in Figure~\ref{fig:mean-condition-number-estimated-bounds-all-problems-mu-05}; the results for different $\mu$ values are similar. The mean condition numbers are bounded by the mean estimated bounds and the preconditioning reduces the condition number when an appropriate $k$ is chosen. The difference in the condition number with different $u_p$ arises when there is a difference in the total approximation error. The estimated bounds get worse with lower precision. The lower bound for half precision for all the problems and single precision for large $k$ values is equal to one and thus not useful. $b_\text{upp}$ and $b_\text{uppspd}$ either coincide or are very similar.
 
\begin{figure}
\begin{center}
\begin{subfigure}[b]{0.49\linewidth}
  \centering
 \includegraphics[width=\linewidth]{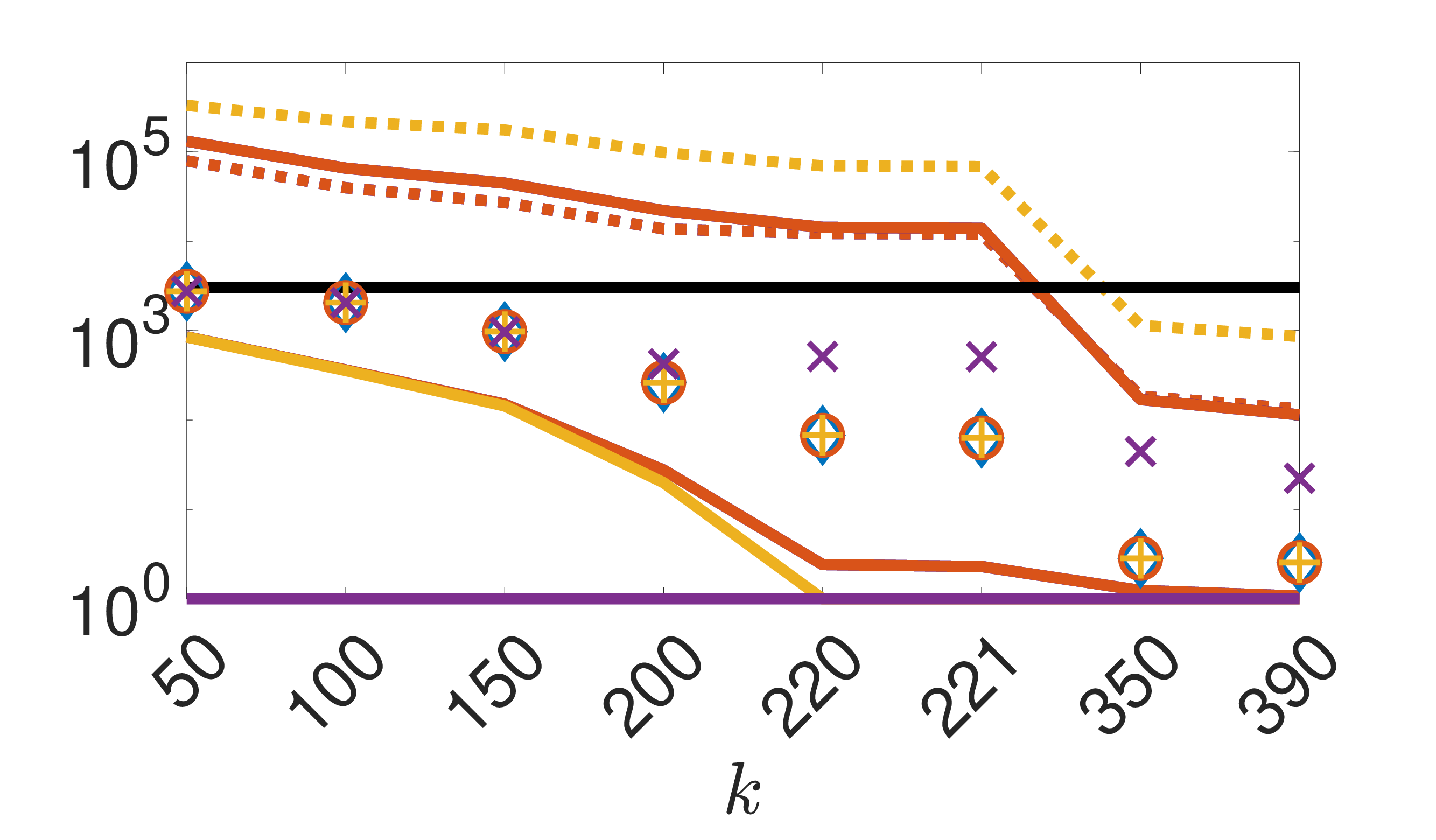}
  \caption{bcsstm07}
\end{subfigure}
\begin{subfigure}[b]{0.49\linewidth}
  \centering
 \includegraphics[width=\linewidth]{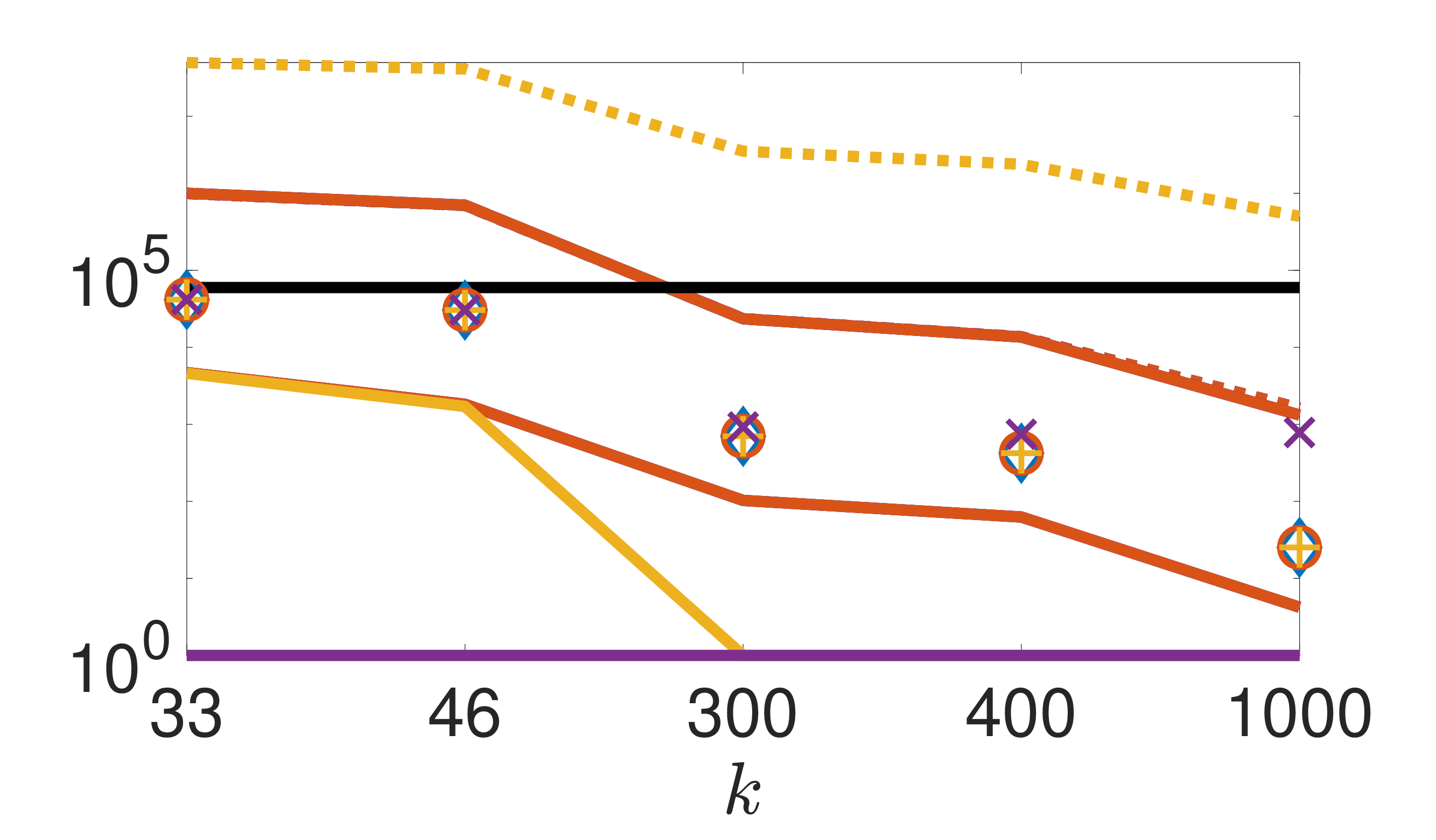}
  \caption{1138\_bus}
\end{subfigure}
\begin{subfigure}[b]{0.49\linewidth}
  \centering
 \includegraphics[width=\linewidth]{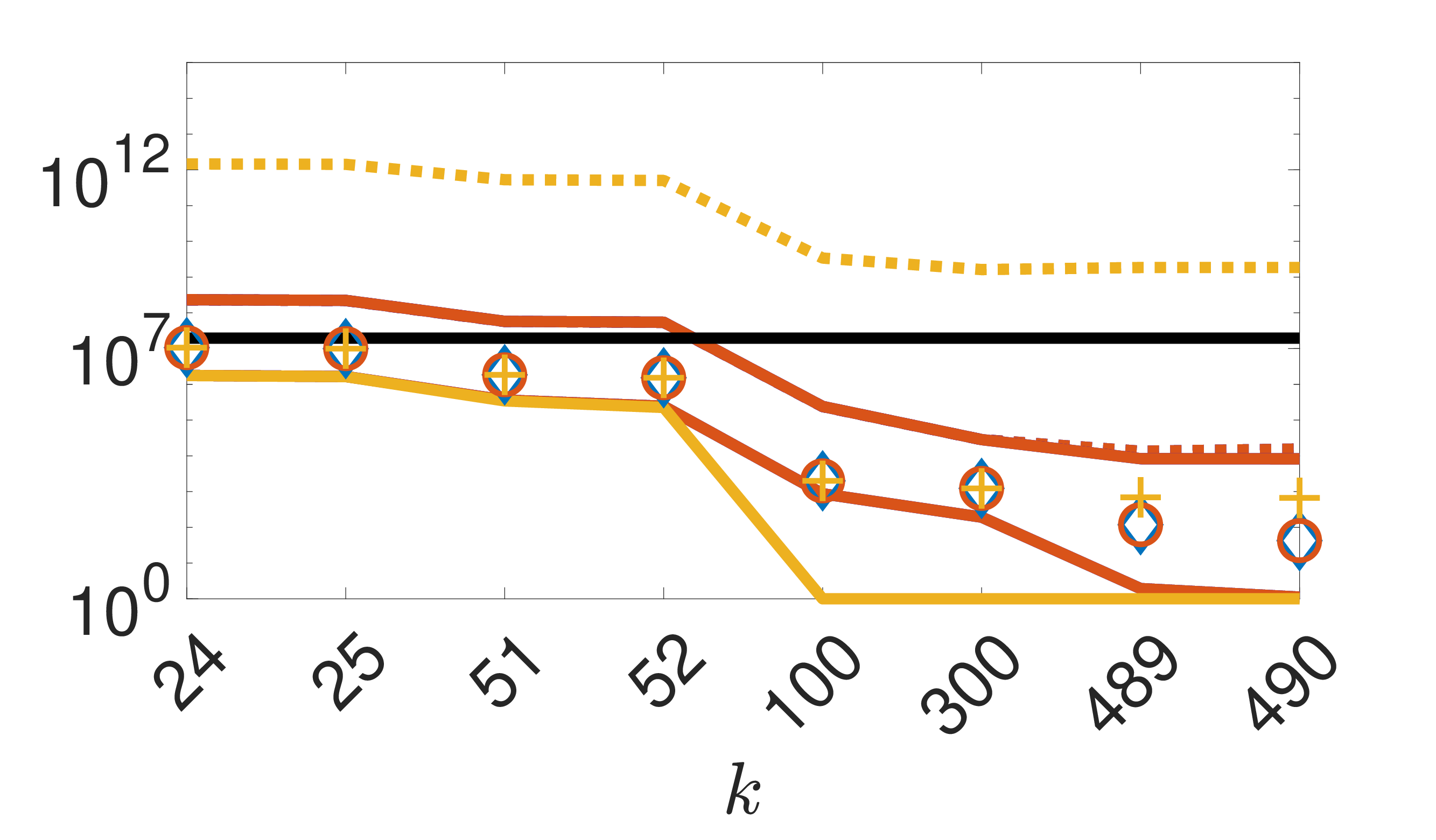}
  \caption{nos7}
\end{subfigure}
\end{center}
    \caption{Condition number of $A +0.5 I$ (black solid line) and the mean condition number of $\hPisqr (A + 0.5 I) \hPisqr$ (markers) with preconditioner \eqref{eq:prec:LMP_nystrom_shifted_A+muI_inverse} constructed using a rank-$k$ Nystr\"{o}m approximation. The mean estimates of the bounds $b_\text{low}$ (solid), $b_\text{upp}$ (solid; double only) and $b_{uppspd}$ (dotted). The bounds $b_{uppspd}$ with half precision are $\mathcal{O}(10^{9})-\mathcal{O}(10^{10})$ for the bcsstm07 problem and $\mathcal{O}(10^{12})-\mathcal{O}(10^{13})$ for 1138\_bus (not shown). The precision $u_p$ is indicated by the colour and the type of the markers: purple crosses denote half, yellow pluses denote single, red circles denote double, and blue diamonds denote `exact'.
    }
    \label{fig:mean-condition-number-estimated-bounds-all-problems-mu-05}
\end{figure}

\subsubsection{Solving the systems}
We now solve the systems in \eqref{eq:shifted_system} with MATLAB's built-in PCG with left-preconditioning, that is,
\begin{equation*}
    \hPi (A + \mu I) x = \hPi b.
\end{equation*}
$\hPi$ is constructed as in the previous section. The entries of $b$ are uniformly distributed random numbers where the random number generator seed is set to $1234$. We set the stopping tolerance to $10^{-6}$.  

The mean iteration count results in Figure~\ref{fig:mean-iteration-count-all-problems-mu-05} correspond to the condition number results in Figure~\ref{fig:mean-condition-number-estimated-bounds-all-problems-mu-05}, although the preconditioner constructed by setting $u_p$ to single and $k \in \{489,490\}$ for the nos7 problem is not useful even if the condition number of the preconditioned system does not grow significantly. There is a modest increase in the number of iterations when the total error is influenced by the use of smaller precision. We note that using a smaller shift when computing the Nystr\"{o}m approximation, for example $\nu = 2u_p \Vert Y \Vert_2$, can give results less sensitive to $u_p$ (not shown). However, for small $k$ values independent of the shift the precision does not have a meaningful influence. This indicates that in many practical cases, the mixed precision Nystr\"{o}m method is likely suitable for use in generating preconditioners for PCG.

\begin{figure}
\begin{center}
\begin{subfigure}[b]{0.49\linewidth}
  \centering
 \includegraphics[width=\linewidth]{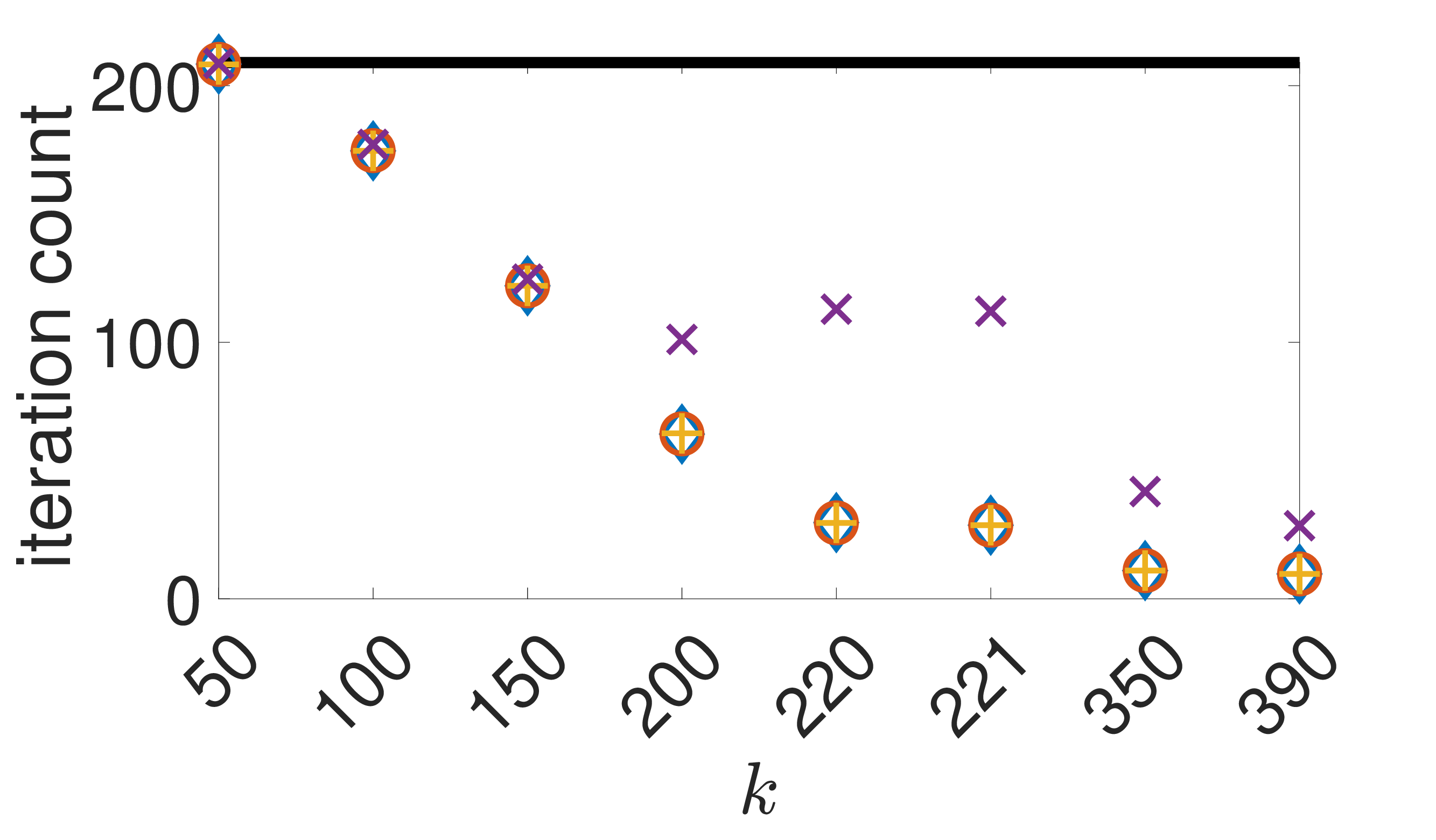}
  \caption{bcsstm07}
\end{subfigure}
\begin{subfigure}[b]{0.49\linewidth}
  \centering
 \includegraphics[width=\linewidth]{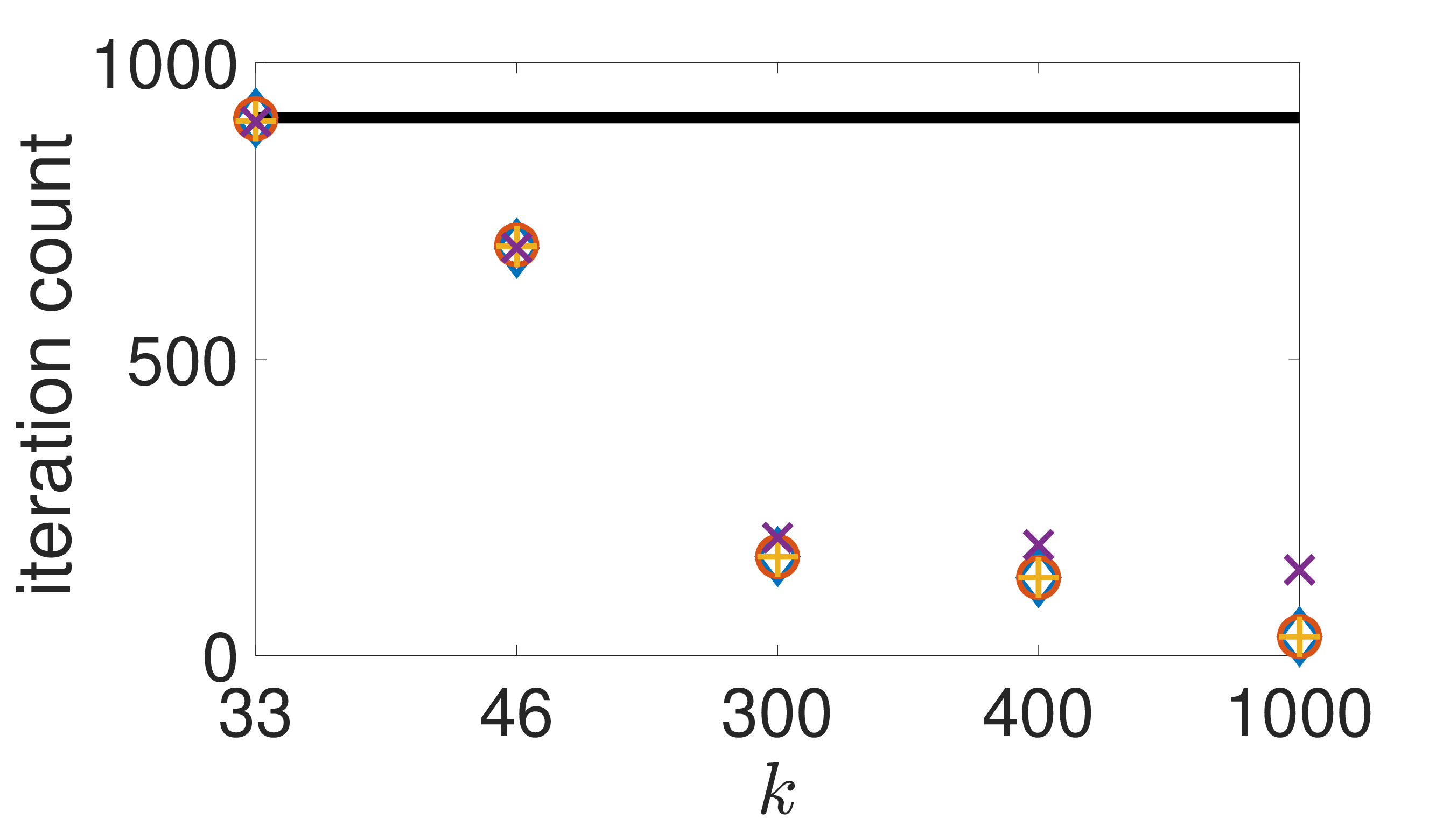}
  \caption{1138\_bus}
\end{subfigure}
\begin{subfigure}[b]{0.49\linewidth}
  \centering
 \includegraphics[width=\linewidth]{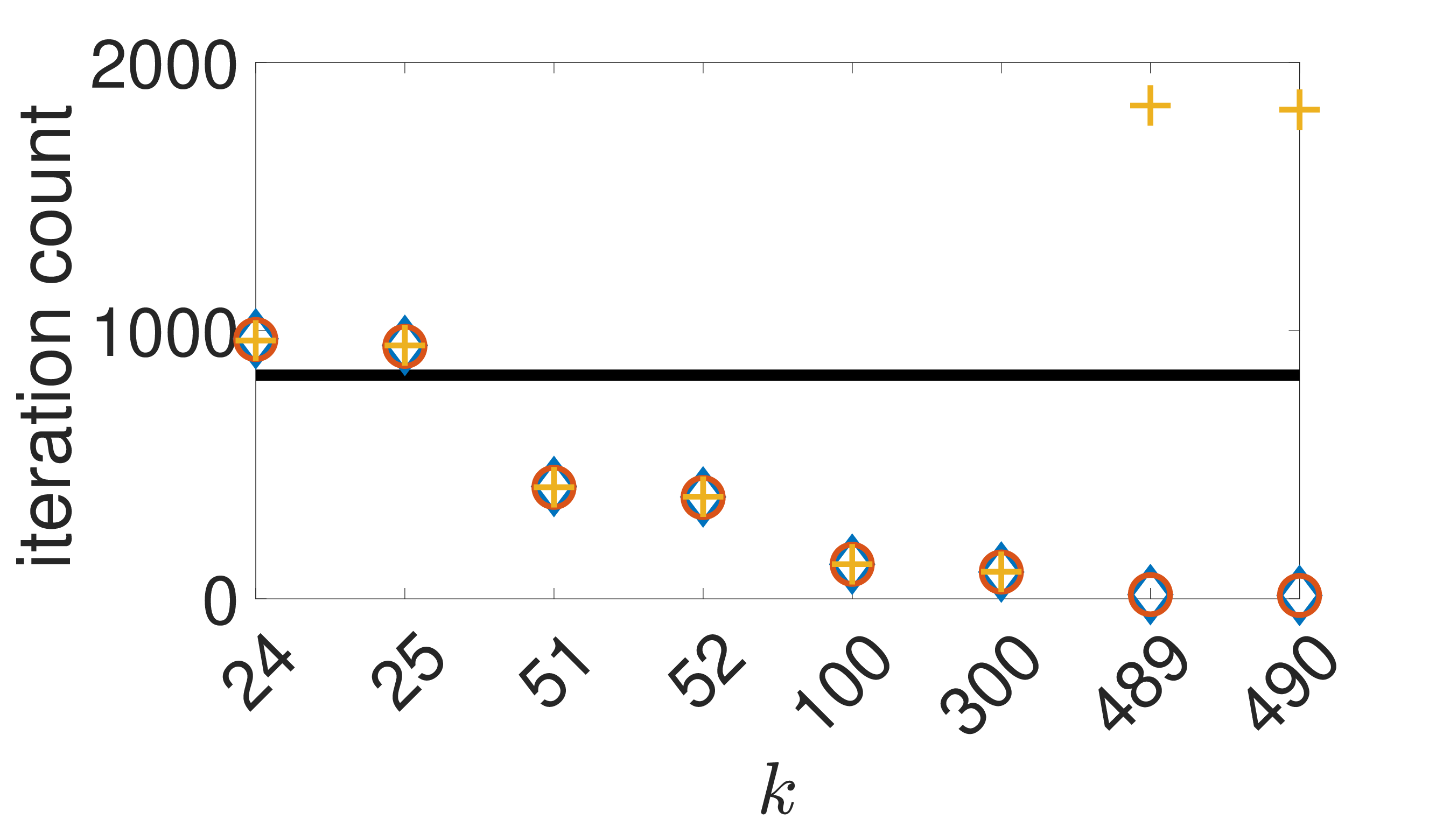}
  \caption{nos7}
\end{subfigure}
\end{center}
    \caption{Mean PCG iteration count when solving $(A +0.5 I)x = b$ without preconditioning (black) and with preconditioner \eqref{eq:prec:LMP_nystrom_shifted_A+muI_inverse} constructed using a rank-$k$ Nystr\"{o}m approximation (markers). For nos7 problem PCG did not converge in $3n$ iterations with one of the ten preconditioners when $k=490$. The precision $u_p$ is indicated by the colour and the type of the markers: purple crosses denote half, yellow pluses denote single, red circles denote double, and blue diamonds denote `exact'.
    }
    \label{fig:mean-iteration-count-all-problems-mu-05}
\end{figure}

\subsection{Kernel ridge regression problem}\label{section:krr}
We consider a linear system of equations arising in kernel ridge regression; see, for example \cite{scholkopf2002learning}. We randomly sample 1184 inputs $y_i \in \mathbb{R}^{22}$ and their corresponding outputs $b_i \in \{ -1, 1 \}$ from the ijcnn1 dataset from LIBSVM \cite{Chang2011LIBSVMAL} using \textit{libsvm2mat} function \cite{tpl_libsvm_func}. The $1184 \times 1184$ matrix $A$ is obtained as a Gaussian kernel, i.e., 
\begin{equation*}
    A_{ij} = \exp(- \|y_i - y_j \|^2_2/2 \sigma^2),
\end{equation*}
where we set $\sigma = 0.5$ as in \cite{Frangella2021}. 

The results are similar to those presented in previous sections and the heuristic \eqref{eq:frob_heuristic_no-eig-sum} gives a suitable estimate of when the finite precision error can be ignored. See Figure~\ref{fig:ijcnn1_approx} for the spectra, and means of the total and finite approximation errors when $k \in \{30,50,100,500,900\}$, and for the mean condition number of the preconditioned shifted systems and mean iteration count with $\mu = 10^{-2}$ (changing $u_p$ has similar effect with $\mu \in \{10^{-1},1\}$; results not shown).

\begin{figure}
\begin{center}
\begin{subfigure}[b]{0.49\linewidth}
  \centering
 \includegraphics[width=\linewidth]{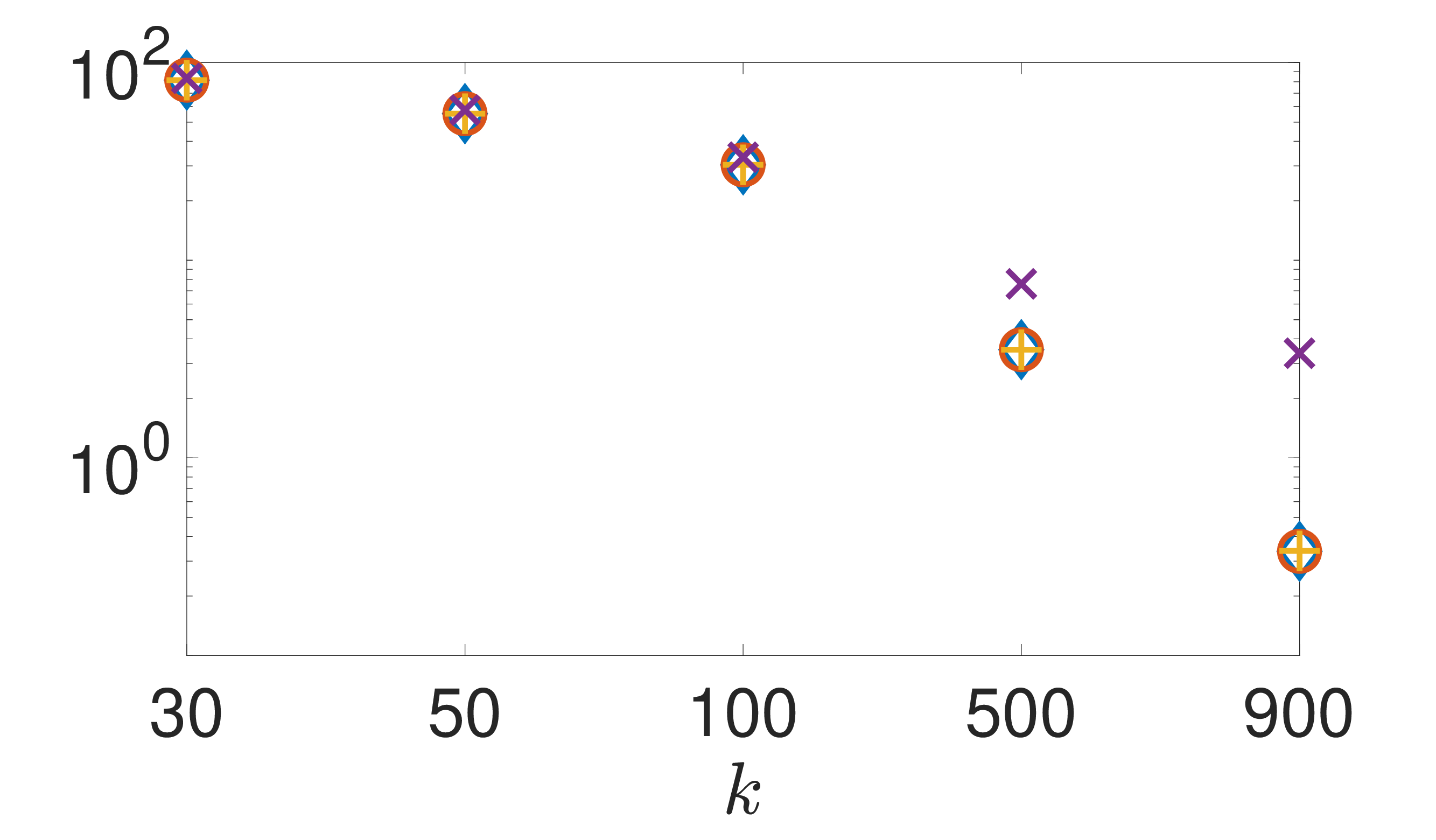}
  \caption{$\| A - \hA_N \|_F$}
\end{subfigure}
\begin{subfigure}[b]{0.49\linewidth}
  \centering
 \includegraphics[width=\linewidth]{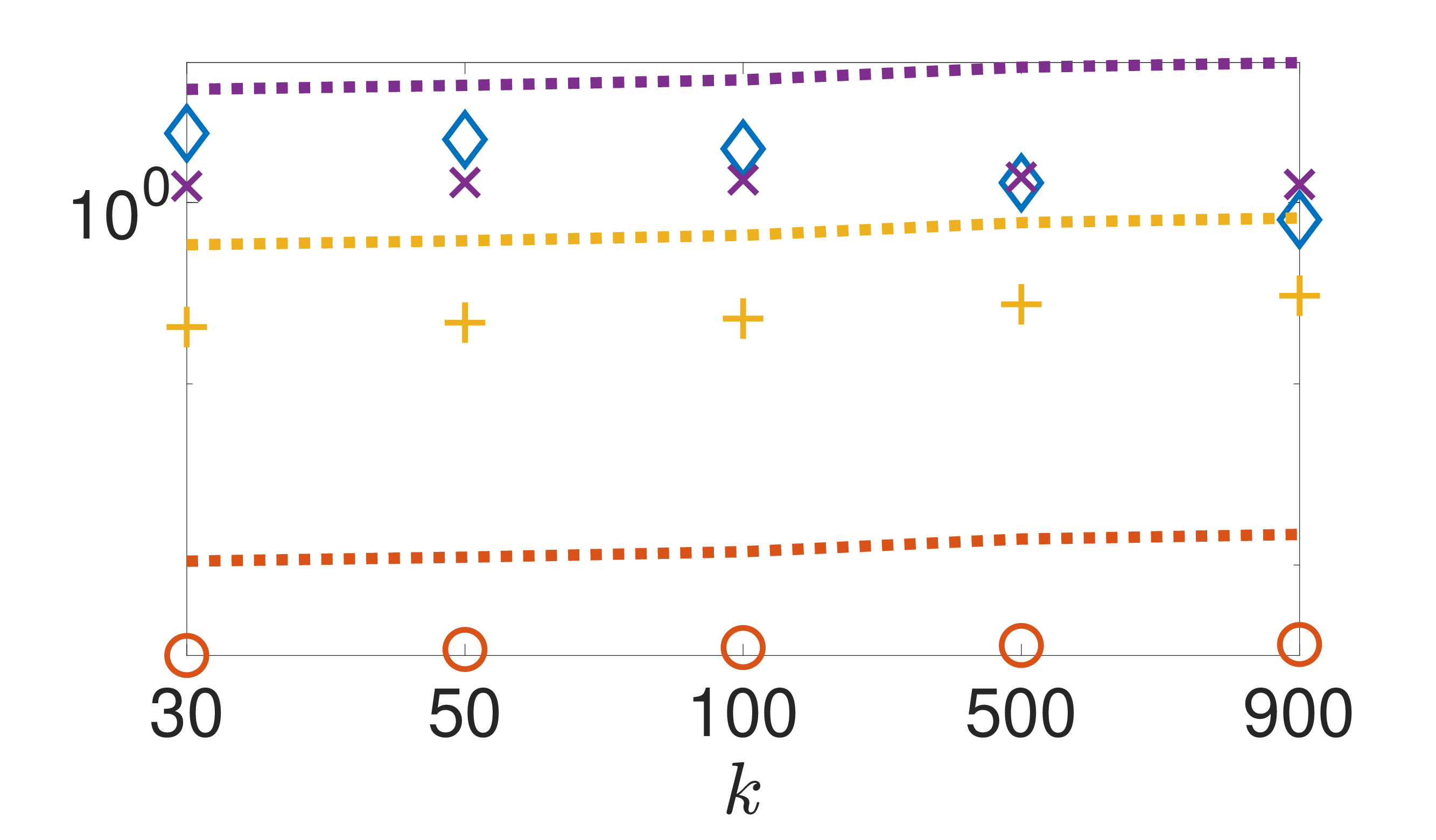}
  \caption{$\| A_N - \hA_N \|_F$}
\end{subfigure}
\begin{subfigure}[b]{0.49\linewidth}
  \centering
 \includegraphics[width=\linewidth]{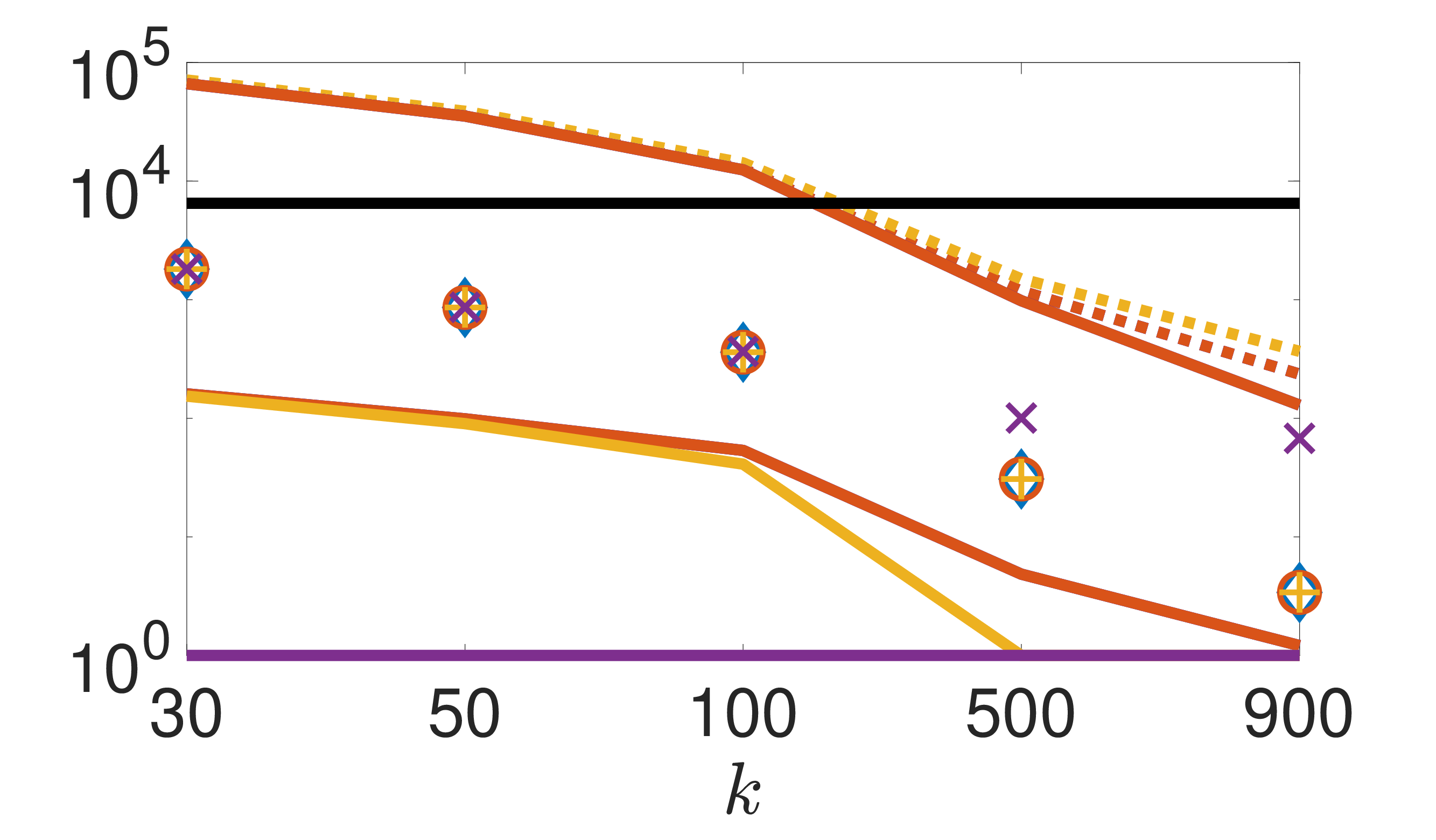}
  \caption{Condition number}
\end{subfigure}
\begin{subfigure}[b]{0.49\linewidth}
  \centering
 \includegraphics[width=\linewidth]{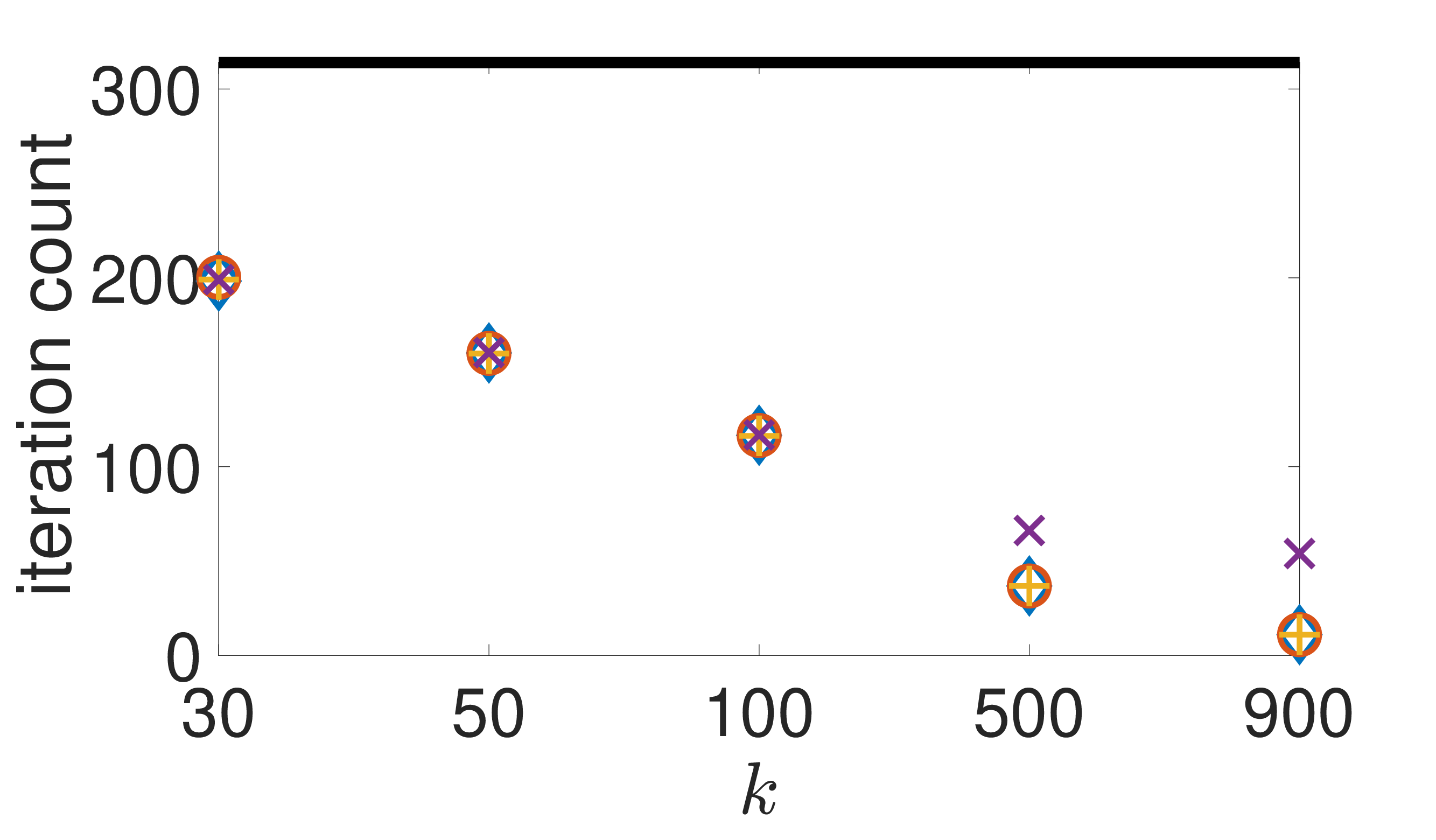}
  \caption{Iteration count}
\end{subfigure}
\begin{subfigure}[b]{0.49\linewidth}
  \centering
 \includegraphics[width=\linewidth]{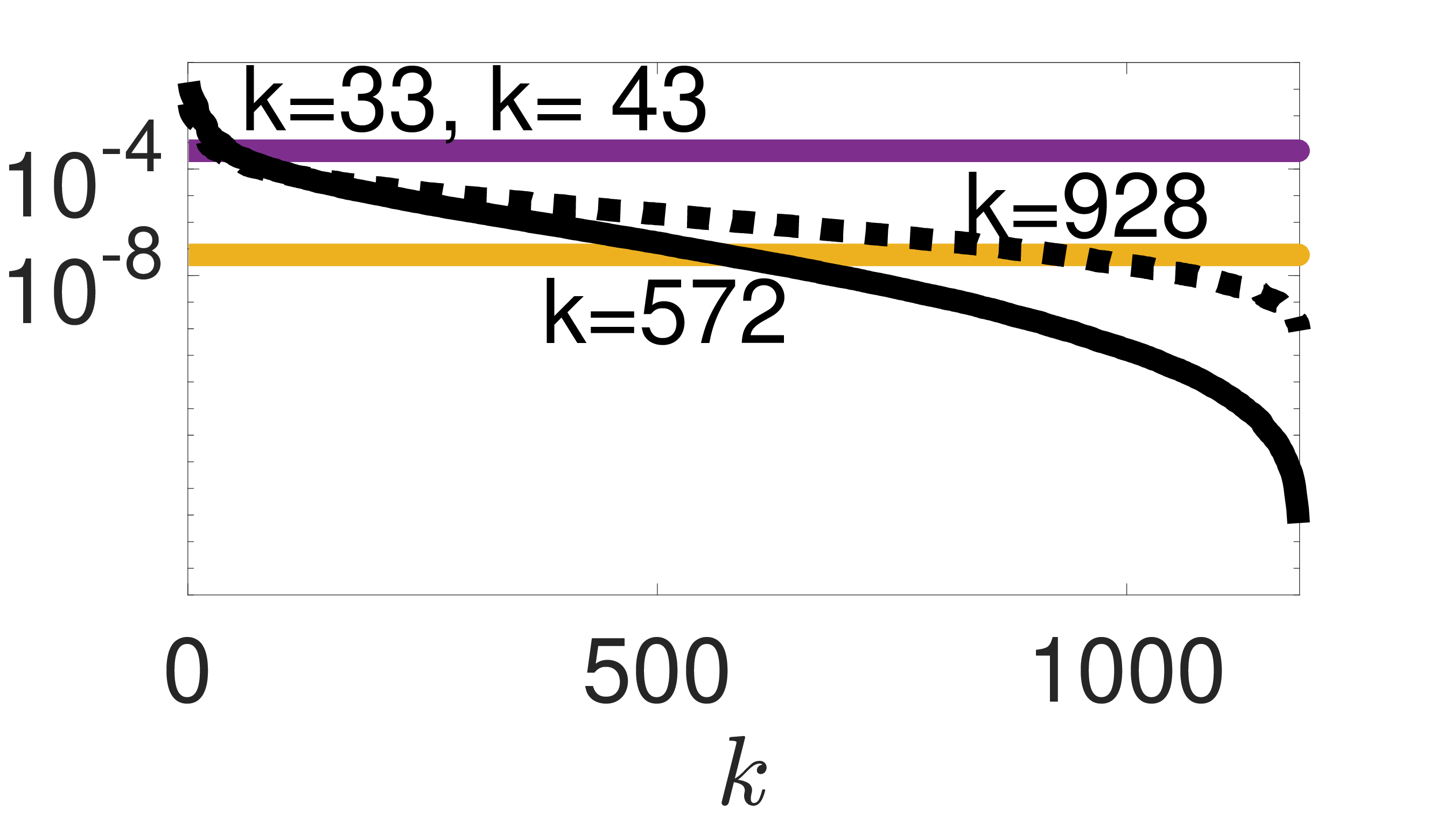}
  \caption{Heuristic}
\end{subfigure}
\begin{subfigure}[b]{0.49\linewidth}
  \centering
 \includegraphics[width=\linewidth]{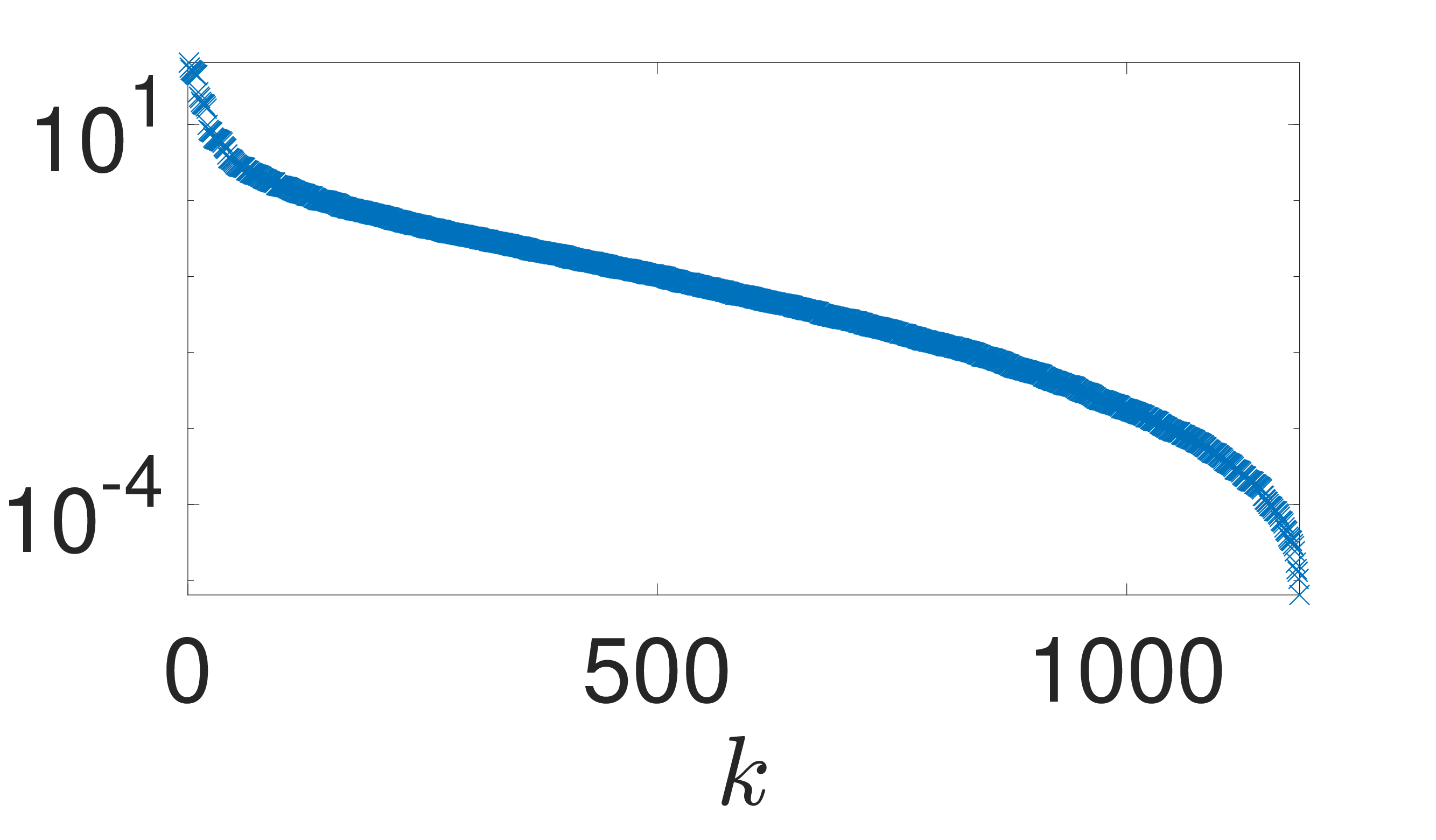}
  \caption{Spectrum}
\end{subfigure}
\end{center}
    \caption{ijcnn1 problem. The top left panel shows Frobenius norm of the mean total error $\|A - \hA_N\|_F$ (markers) versus the rank of approximation $k$. The top right panel shows the mean finite precision error $\|A_N - \hA_N\|_F$ (markers), the mean exact approximation error $\|A-A_N\|_F$ (blue diamonds), and estimates of the finite precision error (dashed lines). 
    The middle left panel shows the condition number of $A +10^{-2} I$ (black solid line) and the mean condition number of $\hPisqr (A + 10^{-2} I) \hPisqr$ (markers) with preconditioner \eqref{eq:prec:LMP_nystrom_shifted_A+muI_inverse} constructed using a rank-$k$ Nystr\"{o}m approximation. The mean estimates of the bounds $b_\text{low}$ (solid), $b_\text{upp}$ (solid; double only) and $b_{uppspd}$ (dotted). The bounds $b_{uppspd}$ with half precision are $\mathcal{O}(10^{8})-\mathcal{O}(10^{9})$ (not shown). The middle right panel shows the mean PCG iteration count.
    The lower left panel shows the right hand-sides of heuristics \eqref{eq:frob_heuristic_full} (black solid) and \eqref{eq:frob_heuristic_no-eig-sum} (black dotted) versus $k$, and $u_p$ (yellow and purple lines). The lower right panel shows the eigenvalues of $A$.
    In all panels, $u_p$ is indicated by the colour of the markers: purple denotes half, yellow denotes single, and red denotes double.}
    \label{fig:ijcnn1_approx}
\end{figure}

\section{Conclusions}\label{sec:conclusions}

In this paper, we considered a mixed precision variant of the single-pass Nystr\"{o}m method for approximating an SPD matrix $A$, where the expensive product with $A$ can be performed in lower precision than the other computations. We bound the total approximation error taking into account the finite precision error. A good quality approximation is obtained when the finite precision error is smaller than the error of the exact Nystr\"{o}m approximation itself. This corresponds to the case when only relatively large eigenvalues are approximated, as suggested by the practical heuristic developed in this work. Numerical examples with both synthetic problems and application problems confirm this observation, and indicate that the standard of using double precision throughout the algorithm may miss opportunities for improving performance.  

We also analysed a randomised limited memory preconditioner constructed with the mixed precision Nystr\"{o}m approximation. We proved bounds on the condition number of the preconditioned coefficient matrix which take into account the finite precision error. The preconditioner is of most interest when it is constructed with a relatively low rank approximation, and in this case using low precision for the product with $A$ does not diminish the quality of the preconditioner in terms of resulting iterations of the conjugate gradient method. 

The limited range of low precision restricts the set of problems that can be considered in the mixed precision framework. This is problematic if setting the low precision to half or even quarter precision is desired. An algorithm that maps a matrix to half precision and preserves symmetry is proposed in \cite{HighamSqueez}. 
Future work on developing scaling algorithms that preserve the positive semidefiniteness property and the structure of the spectrum while also not requiring additionally accesses to the matrix $A$ is of high interest.  

\bibliographystyle{siamplain}
\bibliography{bibl}

\appendix
\section{Details of finite precision analysis}
\label{sec:app_fp}
We provide the details for obtaining the bounds for $\Vert \hC \Vert_F$, $\kappa_2(\hC)$, the step-by-step construction of the computed Nystr\"{o}m approximation taking into account the finite precision error incurred in every operation, and the simplifications of the bound for the finite precision error $\Vert \E \Vert_F$.

\subsection{Bounding $\Vert \hC \Vert_F$}\label{append:cholesky_factor_bound}
We now investigate $\Vert \hC \Vert_F$ by expressing $\hC$ as a perturbed exact Cholesky factor of $\widetilde{B}$. We use \cite[Theorem 1.4]{sun1991perturbation}, which states that given the exact Cholesky decomposition $\widetilde{B} = C^T C$ and if
\begin{equation}\label{eq:cholesky_condition}
    \frac{\Vert \Delta_{Ch} \Vert_F }{\Vert \widetilde{B} \Vert_2} \kappa_2(\widetilde{B}) < 1,
\end{equation}
then the Cholesky decomposition $\widetilde{B} + \Delta_{Ch} = (C + \Delta C_{Ch})^T(C + \Delta C_{Ch}) = \hC^T \hC$ exists and
\begin{equation}\label{eq:deltaC_bound}
    \Vert \Delta C_{Ch} \Vert_F \leq 2^{-1/2} \kappa_2(\widetilde{B}) \frac{\Vert \Delta_{Ch} \Vert_F }{\Vert \widetilde{B} \Vert_2}  \Vert C \Vert_2.
\end{equation}
From \cite[Eq. (10.7)]{HighamBook} we have
\begin{equation}\label{eq:delta_ch_viaB}
    \Vert \Delta_{Ch} \Vert_2 \leq 4k (k+1)u \Vert \widetilde{B} \Vert_2
\end{equation}
and thus using $\Vert \Delta_{Ch} \Vert_F \leq k^{1/2} \Vert \Delta_{Ch} \Vert_2 $, \eqref{eq:cholesky_condition} is transformed to
\begin{equation*}
    4k^{3/2} (k+1)u \kappa(\widetilde{B}) < 1
\end{equation*}
and ignoring the dimensional constant $k$ we need $\kappa(\widetilde{B}) \ll u^{-1}$, which is satisfied under our assumptions. Thus using \eqref{eq:deltaC_bound}, \eqref{eq:delta_ch_viaB},  $\Vert C \Vert_2 = \Vert \widetilde{B} \Vert_2^{1/2}$, and $\Vert C \Vert_F \leq k^{1/2} \Vert C \Vert_2$ we can bound $\Vert \hC \Vert_F$ as
\begin{align}
    \Vert \hC \Vert_F = & \, \Vert C + \Delta C_{Ch} \Vert_F  \leq \Vert C \Vert_F + \Vert \Delta C_{Ch} \Vert_F \nonumber \\
                    \leq & \, k^{1/2} \left(1  + 2^{3/2} k (k+1)u \kappa(\widetilde{B})   \right) \Vert C \Vert_2 \nonumber \\
                     = & \, k^{1/2} \left(1  + 2^{3/2} k (k+1)u \kappa(\widetilde{B})   \right) \Vert \widetilde{B} \Vert_2^{1/2}. \label{eq:Chat_frob_bound1}
\end{align}
Note that using \eqref{eq:forming_Btld}, \eqref{eq:delta_s}, \eqref{eq:B=QTY}, \eqref{eq:delta_B}, and \eqref{eq:Yhat_nu_bound} we have
\begin{equation*}
    \Vert \widetilde{B} \Vert_F \leq \Vert \hB \Vert_F + \Vert \Delta_s \Vert_F \leq (1 + \gamma_n^{(p)} + 3 \gamma_n + 3 \gamma_n \gamma_n^{(p)}) \Vert A \Vert_F \Vert \Omega \Vert_F^2  + (1+ 3 \gamma_n) \hnu \Vert \Omega \Vert_F^2
\end{equation*}
and thus combining this with \eqref{eq:Chat_frob_bound1} gives \eqref{eq:C_hat_frob_bound}.

\subsection{Backtracking the computed approximation}\label{append:backtracking_ANhat}
We backtrack all the computations and refer to the relevant equations in the square brackets.
\begin{align*}
    \hA_N = & \, U \Sigma^2 U^T - \hnu U U^T - \Delta_r \nonumber \\
    \textrm{[\eqref{eq:F=svd}]}    = & \, \hF \hF^T - \hnu U U^T - \Delta_r \nonumber \\
    = & \, \hF \hC \hC^{-1} \hC^{-T} \hC^T \hF^T - \hnu U U^T - \Delta_r \nonumber \\
    = & \, \left( \hY_{\nu} - \hY_{\nu} + \hF \hC \right) \hC^{-1} \hC^{-T} \left( \hY_{\nu} - \hY_{\nu} + \hF \hC \right)^T - \hnu U U^T - \Delta_r \nonumber \\
    = & \, \left( \hY_{\nu} + \Delta_{FC} \right) \hC^{-1} \hC^{-T} \left( \hY_{\nu} + \Delta_{FC} \right)^T - \hnu U U^T - \Delta_r \nonumber \\
        \textrm{[\eqref{eq:chol}]}   = & \, \Big( \hY_{\nu}  + \Delta_{FC} \Big) \Big( \frac{1}{2} \left( \hB + \hB^T \right) + \Delta_{s} + \Delta_{Ch} \Big)^{-1} \Big( \hY_{\nu}  + \Delta_{FC}  \Big)^T - \hnu U U^T - \Delta_r \nonumber \\
         \textrm{[\eqref{eq:B=QTY}]}   = & \, \resizebox{0.87\hsize}{!}{%
        $ \left( \hY_{\nu}  + \Delta_{FC} \right) \Big( \frac{1}{2} \left(  \Omega^T \hY_{\nu} + \Delta_B + 
           \hY_{\nu}^T \Omega + \Delta_B^T \right) + \Delta_{s} + \Delta_{Ch} \Big)^{-1} \left( \hY_{\nu} + \Delta_{FC}  \right)^T $} \nonumber \\
           & \, - \hnu U U^T  - \Delta_r \nonumber \\
         \textrm{[\eqref{eq:Yshifted_compputed}]}   = & \, \left( \hY + \hnu \Omega + \Delta_{\nu}  + \Delta_{FC}  \right) \nonumber \\
          \times  & \, \resizebox{0.87\hsize}{!}{%
        $  \left( \frac{1}{2} \left(  \Omega^T (\hY + \hnu \Omega + \Delta_{\nu}) + \Delta_B + 
           (\hY + \hnu \Omega + \Delta_{\nu})^T \Omega + \Delta_B^T \right) + \Delta_{s} + \Delta_{Ch} \right)^{-1} $} \nonumber \\
           \times  & \,  \left( \hY + \hnu \Omega + \Delta_{\nu} + \Delta_{FC} \right)^T - \hnu U U^T  - \Delta_r \nonumber \\
         \textrm{[\eqref{eq:Yhat}]}   = & \, \Big( (A+ \hnu I) \Omega + \underbrace{\Delta  + \Delta_{\nu}  + \Delta_{FC}  }_{\Delta_1} \Big) \nonumber \\
           \times   & \, \resizebox{0.87\hsize}{!}{%
        $ \Big( \Omega^T (A+ \hnu I )\Omega  + \underbrace{\frac{1}{2} \left(  \Omega^T (  \Delta  + \Delta_{\nu}) + 
           ( \Delta + \Delta_{\nu})^T \Omega + \Delta_B + \Delta_B^T \right) + \Delta_{s} + \Delta_{Ch} }_{\Delta_2} \Big)^{-1} $} \nonumber \\
          \times  & \, \left( (A+ \hnu I) \Omega + \Delta  + \Delta_{\nu}  + \Delta_{FC}  \right)^T - \hnu U U^T - \Delta_r. 
\end{align*}

\subsection{Bounding $\kappa_2(\hC)$}\label{append:cholesky_factor_condition_number}

We wish to express $\kappa_2(\hC)$ via $A$ and $\Omega$. Notice that 
\begin{equation*}
    \hC^T \hC = \Omega^T \left(A +\hnu I \right) \Omega + \Delta_2
\end{equation*}
and 
\begin{equation*}
    \kappa_2(\Omega^T \left(A +\hnu I \right) \Omega + \Delta_2) = \Vert \hC^T \hC  \Vert_2 \Vert \hC^{-1} \hC^{-T}  \Vert_2 = \Vert \hC  \Vert_2^2  \Vert \hC^{-1}  \Vert_2^2 = \kappa_2(\hC)^2.
\end{equation*}
Using this with \cite[Equation 5.4]{cline1979estimate}, we obtain
\begin{align*}
    \kappa_2(\hC) \leq & \, \left( \frac{1+ \epsilon}{1 - \epsilon \kappa_2\left(\Omega^T \left(A +\hnu I \right) \Omega\right)}  \kappa_2\left(\Omega^T \left(A +\hnu I \right) \Omega\right) \right)^{1/2} \nonumber \\
    \leq & \left( \frac{1+ \epsilon}{1 - \epsilon \kappa_2(\Omega^T \left(A +\hnu I \right) \Omega)} \Vert A + \hnu I \Vert_2  \Vert  \Omega \Vert_2^2 \Vert (A_k + \hnu I_k)^{-1} \Vert_2  \Vert \left( W_1^T \Omega \right)^{\dagger} \Vert_2^2 \right)^{1/2} \nonumber \\
    \leq & \ \left( \frac{1+ \epsilon}{1 - \epsilon \kappa_2(\Omega^T \left(A +\hnu I \right) \Omega)}\right)^{1/2} \kappa_2( A_k + \hnu I_k)^{1/2}  \widetilde{\kappa}(\Omega), 
\end{align*}
where $\epsilon$ is defined in \eqref{eq:epsilon_kappaC} and the second inequality is due to \eqref{eq:bound_inverse_XTAX}. Note that $\epsilon \kappa_2(\Omega^T \left(A +\hnu I \right) \Omega) = \Vert  \Delta_2 \Vert_2 \Vert (\Omega^T \left(A +\hnu I \right) \Omega)^{-1} \Vert_2 < 1$ when the assumption \eqref{eq:assumption_1st_order_is_OK} is satisfied.

\subsection{Simplifying the bound}\label{append:simplifying_bound}
We simplify \eqref{eq:finite_prec_error_full_bound} using
\begin{align*}
 & 
\left( \gamma_n + \gamma_n^{(p)} + \gamma_n \gamma_n^{(p)} +  \left( 1  + \gamma_n^{(p)}\right) \gamma_k  \kappa_F(\hC) \left( 1 +  \frac{\gamma_k k^{1/2} \kappa_2(\hC)}{1 - \gamma_k k^{1/2} \kappa_2(\hC)} \right) \right)^2  \\
   &  \times \Vert (\Lambda_k + \hnu I_k)^{-1} \Vert_2 \widetilde{\kappa}( \Omega)^2 \Vert A \Vert_F^2 \\
  &  = \left( \gamma_n + \gamma_n^{(p)} + \gamma_n \gamma_n^{(p)} +  \left( 1  + \gamma_n^{(p)}\right) \gamma_k  \kappa_F(\hC) \left( 1 +  \frac{\gamma_k k^{1/2} \kappa_2(\hC)}{1 - \gamma_k k^{1/2} \kappa_2(\hC)} \right) \right)^2 \\
    & \times \frac{ \kappa_2 (\Lambda_k + \hnu I_k)}{\Vert \Lambda_k + \hnu I_k \Vert_2} \widetilde{\kappa}( \Omega)^2 \Vert A \Vert_F^2  \\
   &  \leq \left( \gamma_n + \gamma_n^{(p)} + \gamma_n \gamma_n^{(p)} +  \left( 1  + \gamma_n^{(p)}\right) \gamma_k  \kappa_F(\hC) \left( 1 +  \frac{\gamma_k k^{1/2} \kappa_2(\hC)}{1 - \gamma_k k^{1/2} \kappa_2(\hC)} \right) \right) \\
    & \times \kappa_2 (\Lambda_k + \hnu I_k)^{1/2} \widetilde{\kappa}( \Omega) \Vert A \Vert_F,
\end{align*}
because
\begin{align*}
    1\geq & \left( \gamma_n + \gamma_n^{(p)} + \gamma_n \gamma_n^{(p)} +  \left( 1  + \gamma_n^{(p)}\right) \gamma_k  \kappa_F(\hC) \left( 1 +  \frac{\gamma_k k^{1/2} \kappa_2(\hC)}{1 - \gamma_k k^{1/2} \kappa_2(\hC)} \right) \right) \\
    & \times \frac{ \kappa_2 (\Lambda_k + \hnu I_k)^{1/2} \widetilde{\kappa}( \Omega) \Vert A \Vert_F }{ \Vert \Lambda_k + \hnu I_k \Vert_2} 
\end{align*}
when \eqref{cond:u_p_kappaAk} and \eqref{cond:u_kappaAk} hold. Under the latter condition, an equivalent argument is applied to 
\begin{equation*}
   \left(  \gamma_n  + \gamma_k \kappa_F(\hC)  \left( 1 + \frac{\gamma_k k^{1/2} \kappa_2(\hC)}{1 - \gamma_k k^{1/2} \kappa_2(\hC)} \right)  \right)^2 \hnu^2 \Vert (\Lambda_k + \hnu I_k)^{-1} \Vert_2 \widetilde{\kappa}( \Omega)^2
\end{equation*}
and we obtain
\begin{align*}
  &   \Vert \hA_N - (A+ \hnu I)_N \Vert_F
        \leq \\
        & \, 
            6  k^{1/2} \left( \gamma_n + \gamma_n^{(p)} + \gamma_n \gamma_n^{(p)} +  \left( 1  + \gamma_n^{(p)}\right) \gamma_k  \kappa_F(\hC) \left( 1 +  \frac{\gamma_k k^{1/2} \kappa_2(\hC)}{1 - \gamma_k k^{1/2} \kappa_2(\hC)} \right) \right)  \nonumber \\
    & \times \kappa_2(A_k + \hnu I)^{1/2} \widetilde{\kappa}( \Omega) \Vert A \Vert_F   \nonumber \\
						 & \, +  k^{1/2} \left( \gamma_n^{(p)} + 3 \gamma_n + k \gamma_{k+1} + 3 \gamma_n \gamma_n^{(p)} + k \gamma_{k+1} \gamma_n^{(p)}  \right) \kappa_2(A_k + \hnu I) \widetilde{\kappa}( \Omega)^2 \Vert A \Vert_F  \nonumber \\
         & \, + 3 k^{1/2} \left(  \gamma_n  + \gamma_k \kappa_F(\hC)  \left( 1 + \frac{\gamma_k k^{1/2} \kappa_2(\hC)}{1 - \gamma_k k^{1/2} \kappa_2(\hC)} \right)  \right)\hnu \kappa_2(A_k + \hnu I)^{1/2} \widetilde{\kappa}( \Omega) \nonumber \\
             &\, +  k^{1/2} \left( 3 \gamma_n + k \gamma_{k+1} \right) \hnu  \kappa_2(A_k + \hnu I) \widetilde{\kappa}( \Omega)^2  \nonumber \\
						  & \, + 2 k^{1/2} \hnu \\
        \lesssim  & \, 
             6  k^{1/2} \left( \gamma_n + \gamma_n^{(p)} + \gamma_n \gamma_n^{(p)} \right) \kappa_2(A_k + \hnu I)^{1/2} \widetilde{\kappa}( \Omega) \Vert A \Vert_F  \nonumber \\             
           &\, + 6  k^{3/2} \left( 1  + \gamma_n^{(p)}\right) \gamma_k  \left( \frac{1+ \epsilon}{1 - \epsilon \kappa_2(\Omega^T \left(A +\hnu I \right) \Omega)}\right)^{1/2}  \kappa_2( A_k + \hnu I_k)  \widetilde{\kappa}(\Omega)^2    \Vert A \Vert_F  \nonumber \\
						 & \, +  k^{1/2} \left( \gamma_n^{(p)} + 3 \gamma_n + k \gamma_{k+1} + 3 \gamma_n \gamma_n^{(p)} + k \gamma_{k+1} \gamma_n^{(p)}  \right) \kappa_2(A_k + \hnu I) \widetilde{\kappa}( \Omega)^2 \Vert A \Vert_F  \nonumber \\
         & \, +     3  k^{1/2} \left(  \gamma_n  + k \gamma_k \left( \frac{1+ \epsilon}{1 - \epsilon \kappa_2(\Omega^T \left(A +\hnu I \right) \Omega)}\right)^{1/2} \kappa_2( A_k + \hnu I_k)^{1/2}  \widetilde{\kappa}(\Omega)  \right) \nonumber \\
            \,    & \times \hnu \kappa_2(A_k + \hnu I)^{1/2} \widetilde{\kappa}( \Omega)  \nonumber \\
             &\, +  k^{1/2} \left( 3 \gamma_n + k \gamma_{k+1} \right) \hnu  \kappa_2(A_k + \hnu I) \widetilde{\kappa}( \Omega)^2  \nonumber \\
						  & \, + 2 k^{1/2} \hnu,
        \label{eq:fpNystrom_error_bound}
\end{align*}
where the second inequality is due to \eqref{eq:kappahC_bound} and ignoring $u^2$ terms. We further simplify the bound using assumptions $u \leq u_p$, $\hnu \leq c(n,k) u_p \Vert A \Vert_F  \Vert \Omega \Vert_F^2$ and $ \widetilde{\gamma}_n^{(p)}  \geq k \gamma_k  \left( \frac{1+ \epsilon}{1 - \epsilon \kappa_2(\Omega^T \left(A +\hnu I \right) \Omega)}\right)^{1/2}$.

\section{Proof of Theorem~\ref{prop:prec_condition_no_deterministic}}\label{append_condition_number_finite_precision}
The main idea of the proof of Theorem~\ref{prop:prec_condition_no_deterministic} closely follows the proof in \cite[Section A.1.1.]{Frangella2021}. We provide the full proof accounting for the finite precision error. 

We first obtain the upper bounds for the condition number.  
Since $A= \hA_N + E + \E$, Weyl's inequality gives the bound 
\begin{align*}
    \lambmax (\hPisqr (A + \mu I) \hPisqr) &\leq 
    \lambmax(\hPisqr (\hA_N + \mu I) \hPisqr) 
    \\&\phantom{\leq}+\lambmax(\hPisqr E \hPisqr) + \lambmax(\hPisqr \E \hPisqr).
\end{align*}
The eigenvalues $\lambmax(\hPisqr (\hA_N + \mu I) \hPisqr)$ and $\lambmax(\hPisqr E \hPisqr)$ can hence be bounded as in \cite{Frangella2021}, because $\hPisqr$ is constructed with eigenpairs of $\hA_N$ and $E$ is positive semidefinite and thus $\hPisqr E \hPisqr$ is also positive semidefinite. We thus have 
\begin{equation}\label{eq:bound_prec_Amu+E}
    \lambmax(\hPisqr (\hA_N + \mu I) \hPisqr) + \lambmax(\hPisqr E \hPisqr) \leq \hlambda_k + \mu + \| E \|_2.
\end{equation}
Since we assume that $\hU$ has orthogonal columns and thus $\sigmax(\hPi) = 1$ when $k<n$, we have
\begin{align}
  \lambmax(\hPisqr \E \hPisqr) & = \, \lambmax (\hPi \E) \nonumber \\
  &  \leq \, \max_i | \lambda_i (\hPi \E)| \nonumber\\
  &  \leq \, \sigmax (\hPi \E) \nonumber\\
  &  \leq  \, \sigmax(\hPi) \sigmax(\E) \nonumber\\
  &  = \, \sigmax(\E) \nonumber\\
  &  = \, \| \E \|_2. \label{eq:bound_prec_Epsi}
\end{align}
Combining \eqref{eq:bound_prec_Amu+E} and \eqref{eq:bound_prec_Epsi} gives
\begin{equation}\label{eq:lambdamax_PAmuP_upper_bound}
    \lambmax (\hPisqr (A + \mu I) \hPisqr) \leq \hlambda_k + \mu + \| E \|_2 + \| \E \|_2. 
\end{equation}
We bound $\lambmin (\hPisqr (A + \mu I) \hPisqr)$ using Weyl's inequality and the facts that we have $\lambmin(\hPisqr (\hA_N + \mu I ) \hPisqr) \geq \mu$ and $\lambmin(\hPisqr E \hPisqr) \geq 0$ as follows:
\begin{align}
     \lambmin(\hPisqr (A + \mu I) \hPisqr)  &  \geq \,
     \lambmin(\hPisqr (\hA_N + \mu I) \hPisqr)  \nonumber\\
     & \quad + \lambmin(\hPisqr E \hPisqr) + \lambmin(\hPisqr \E \hPisqr) \nonumber\\
     &  \geq \, \mu + \lambmin(\hPisqr \E \hPisqr) \nonumber\\
      &  \geq \, \mu - \| \E \|_2.  \label{eq:lambdamin_PAmuP_lower_bound}
\end{align}
Assuming that $ \mu > \| \E \|_2$, from \eqref{eq:lambdamax_PAmuP_upper_bound} and \eqref{eq:lambdamin_PAmuP_lower_bound} we have
\begin{equation*}
    \kappa(\hPisqr (A + \mu I) \hPisqr) \leq \frac{\hlambda_k + \mu + \| E \|_2 + \| \E \|_2}{\mu - \| \E \|_2} = 1 + \frac{\hlambda_k + \| E \|_2 + 2\| \E \|_2}{\mu - \| \E \|_2 },
\end{equation*}
which proves the upper bound in \eqref{eq:condNobounds-lower-upper}. 

The condition $ \mu > \| \E \|_2$ may not be satisfied for small values of $\mu$. It can be avoided 
following the argument in \cite{Frangella2021}. We consider
\begin{align}
   \lambmin \left(\hPisqr (A + \mu I) \hPisqr \right) & = \, \lambmin \left( (A + \mu I )^{1/2} \hPi (A + \mu I)^{1/2} \right) \nonumber\\
                                                     & = \, \frac{1}{\lambmax \left((A + \mu I)^{-1/2} \hP (A + \mu I)^{-1/2} \right)} \label{eq:lmineq}
\end{align}
and bound $\lambmax \left( (\Amu)^{-1/2} \hP (\Amu)^{-1/2} \right)$ from above. Using Weyl's inequality, we obtain
\begin{align}
    \lambmax &\left((\Amu)^{-1/2} \hP (\Amu)^{-1/2} \right)  \nonumber \\  
  \hspace{-1pt}  &=\lambmax \left((\Amu)^{-1/2} \left( \frac{1}{\hlambda_k + \mu} \left( \hA_N \hspace{-1pt} + \hspace{-1pt} \mu \hU \hU^T \right) + \left( I - \hU \hU^T \right) \right) (\Amu)^{-1/2}\right) \nonumber\\
   &\leq   \frac{1}{\hlambda_k + \mu} \lambmax \left( (\Amu)^{-1/2} \left( \hA_N + \mu \hU \hU^T \right)  (\Amu)^{-1/2} \right) \label{eq:max_eig_prec1}\\
   &\phantom{\leq} +  \lambmax \left( (\Amu)^{-1/2} \left( I - \hU \hU^T \right)  (\Amu)^{-1/2} \right). \label{eq:max_eig_prec}
\end{align}
To bound 
\eqref{eq:max_eig_prec1}, we use Weyl's inequality again, giving 
\begin{align*}
\lambmax \Big( \Big. (&\Amu)^{-1/2} \left( \hA_N + \mu \hU \hU^T \right)  (\Amu)^{-1/2} \Big. \Big) \\
&= \lambmax \left( (\Amu)^{-1/2} \left( \hA_N + \E - \E + \mu \hU \hU^T \right)  (\Amu)^{-1/2} \right)\\
     &\leq   \lambmax \left( (\Amu)^{-1/2} \left( \hA_N + \E + \mu \hU \hU^T \right)  (\Amu)^{-1/2} \right)  \\
     &\phantom{\leq}+  \lambmax \left( - (\Amu)^{-1/2} \E   (\Amu)^{-1/2} \right).          
\end{align*}
Note that $\hA_N + \E = A_N$ is the Nystr\"{o}m approximation of $A$ in infinite precision and thus $A - A_N$ is positive semidefinite. Then 
\begin{equation*}
    (\Amu) - (\hA_N + \E + \mu \hU \hU^T ) = A - A_N + \mu (I - \hU \hU^T )
\end{equation*}
is also positive semidefinite, because $I - \hU \hU^T$ is an orthogonal projector. Hence $I - (\Amu)^{-1} (\hA_N + \E + \mu \hU \hU^T ) $ is positive semidefinite and by \cite[Section 7.7]{HornJohnson},
\begin{equation}\label{eq:max_eig_prec3}
    \lambmax \left( (\Amu)^{-1/2} \left( \hA_N + \E + \mu \hU \hU^T \right)  (\Amu)^{-1/2} \right) \leq 1.
\end{equation}
Further,
\begin{align}
    \lambmax \left( - (\Amu)^{-1/2} \E   (\Amu)^{-1/2} \right) \leq & \ \max_i \left| \lambda_i \left((\Amu)^{-1/2} \E   (\Amu)^{-1/2} \right) \right| \nonumber\\
                                                                    = & \ \max_i \left| \lambda_i \left((\Amu)^{-1} \E \right) \right| \nonumber\\
                                                                    \leq & \ \sigmax \left( (\Amu)^{-1} \E \right)  \nonumber\\
                                                                    \leq & \|(\Amu)^{-1} \|_2 \| \E \|_2 \nonumber\\
                                                                    = & \frac{\| \E \|_2}{\lambmin(A) + \mu}. \label{eq:max_eig_prec4}
\end{align}     
The term in \eqref{eq:max_eig_prec} is bounded as in \cite{Frangella2021}, because $I - \hU \hU^T$ is an orthogonal projector and $\| I - \hU \hU^T \|_2 = 1$, that is,
\begin{equation}\label{eq:max_eig_prec5}
    \lambmax \left( (\Amu)^{-1/2} \left( I - \hU \hU^T \right)  (\Amu)^{-1/2} \right) \leq \frac{1}{\lambmin(A) + \mu}.
\end{equation}
From \eqref{eq:max_eig_prec3}, \eqref{eq:max_eig_prec4}, and \eqref{eq:max_eig_prec5}, we obtain
\begin{equation}
     \lambmax \left( (\Amu)^{-1/2} \hP (\Amu)^{-1/2} \right) \leq \frac{1}{\hlambda_k + \mu} + \frac{\| \E \|_2 +1 }{\lambmin(A) + \mu}, \label{eq:lmaxmin}
\end{equation}
and thus combining \eqref{eq:lmaxmin}, \eqref{eq:lmineq}, and \eqref{eq:lambdamax_PAmuP_upper_bound}, we have  
\begin{equation*}
\kappa(\hPisqr (\Amu) \hPisqr) \leq  \left(  \hlambda_k + \mu + \| E \|_2 + \| \E \|_2 \right) \left( \frac{1}{\hlambda_k + \mu} + \frac{\| \E \|_2 +1 }{\lambmin(A) + \mu} \right), 
\end{equation*}
which proves \eqref{eq:condNobounds-spd}.

We now obtain the lower bound for the condition number in \eqref{eq:condNobounds-lower-upper}. The lower bound for $\lambmax (\hPisqr (\Amu) \hPisqr)$ is acquired using the same ideas as for \linebreak $\lambmin(\hPisqr (\Amu) \hPisqr)$ above, that is,
\begin{align*}
    \lambmax (\hPisqr (\Amu) \hPisqr)   &\geq  \lambmax(\hPisqr (\hAmu + E) \hPisqr) \\
    &\phantom{\geq}+ \lambmin(\hPisqr \E \hPisqr)\\ 
    &\geq  \lambmax(\hPisqr (\hAmu) \hPisqr) + \lambmin(\hPisqr E \hPisqr)\\ 
    &\phantom{\geq} + \lambmin(\hPisqr \E \hPisqr) \\
    &\geq \hlambda_k + \mu - \| \E \|_2.
\end{align*}
The upper bound for $\lambmin(\hPisqr (\Amu) \hPisqr) = \lambmin((\Amu)\hPi)$ is obtained as in \cite{Frangella2021}, namely
\begin{equation*}
    \lambmin((\Amu)\hPi) \leq \lambmin(\Amu) \lambmax (\hPi) = \lambmin(A) + \mu.
\end{equation*}
We thus have
\begin{equation*}
   \kappa(\hPisqr (\Amu) \hPisqr) \geq \max \left\{1, \frac{\hlambda_k + \mu - \| \E \|_2}{\mu + \lambda_{min} (A) } \right\},
\end{equation*}
because the condition number is always at least 1.

\end{document}